\newtheorem{theorem}{Theorem}[section]
\newtheorem{lemma}[theorem]{Lemma}
\newtheorem{corollary}[theorem]{Corollary}
\newtheorem{definition}[theorem]{Definition}
\newtheorem{claim}[theorem]{Claim}
\newtheorem{proposition}[theorem]{Proposition}
\newcommand{\onevec}{\mathbbm{1}}
\newcommand{\zerovec}{\mathbbm{O}}
\newcommand{\interior}{\mathrm{int}}
\newcommand{\dist}{\mathrm{dist}}
\newcommand{\Real}[0]{\mathrm{\hspace{0.1mm}I\hspace{-0.8mm}R}}
\newcommand{\Rational}{\mathbbm{Q}}
\newcommand{\Integer}{\mathbbm{Z}}
\newcommand{\Natural}{\mathbbm{N}}
\newcommand{\conv}{\mathrm{conv}}
\newcommand{\rank}{\mathrm{rank}}
\DeclareMathOperator{\bigO}{O}
\title{Rock Extensions with Linear Diameters}
\author{Volker Kaibel\thanks{Institute for Mathematical Optimization, Otto von Guericke University Magdeburg, 39106, Magdeburg, Germany
  (\protect\url{kaibel@ovgu.de}, \protect\url{kirill.kukharenko@ovgu.de}).}
\and Kirill Kukharenko\footnotemark[1]}
\date{\vspace{-6ex}}
\begin{document}
\maketitle

\begin{abstract}
We describe constructions of extended formulations that establish a certain relaxed version of the Hirsch\deleted{-} conjecture and prove that if there is a pivot rule for the simplex algorithm for which one can bound the number of steps by \added{a polynomial in} the \deleted{(monotone)} diameter plus the number of facets of the polyhedron of feasible solutions then the general linear programming problem can be solved in strongly polynomial time. 
\end{abstract}

% % REQUIRED
% \begin{keywords}
% Hirsch conjecture, polytope diameter, extended formulation, linear programming 
% \end{keywords}

\section{\added{Introduction}}\label{sec:intro}

The \emph{diameter}  of a polytope $P$ is the smallest number $\delta$ such that in the \emph{graph} of $P$ formed by its vertices and its one-dimensional faces (\emph{edges}) of $P$ every pair of vertices is connected by a path with at most $\delta$ edges. Warren M. Hirsch conjectured in 1957 (see, e.g.,~\cite{ziegler94}) that the diameter of each $d$-dimensional polytope with $n$ facets is bounded from above by $n-d$. \replaced{Disproving this bound took substantial effort and was achieved only $53$ years later by Santos~\cite{santos2010} using a polytope in dimension $43$ with $86$ facets and diameter $44$.}{ Though being of central interest in polytope theory, that conjecture has only been disproved in 2010 by Santos ~\cite{santos2010}, who exhibited a $43$-dimensional polytope with $86$ facets and diameter $44$}.  
Today, it is known that no upper bound better than $\tfrac{21}{20}(n-d)$ is valid in general~\cite{@matschke2015}. 
The best-known upper bounds \added{in terms of $n$ and $d$} are \deleted{$(n-d)^{\log_2 d}$ by Todd~\cite{todd2014},
$n^{\log_2 d + 2}$ by Kalai and Kleitman~\cite{kalai92}, and}
\added{ are derived from a result by Kalai and Kleitman~\cite{kalai92}, who presented an upper bound of $n^{\log_2 d + 2}$. Todd~\cite{todd2014} improved the latter bound to $(n-d)^{\log_2 d}$, which was further refined by Sukegawa~\cite{sukegawa2019} to $(n - d)^{\log_2 O(d /\log_2 d)}$. Another line of research, which was carried out} \deleted{$\bigO(\Delta^2n^{3.5}\log_2(n\Delta))$} by Bonifas, di Summa, Eisenbrand, Hähnle, and Niemeier~\cite{disumma2014}, \added{lead to the upper bound $\bigO\big(\Delta^2n^{3.5}\log_2(n\Delta)\big)$} where $\Delta$ is the largest absolute value of a sub-determinant of the integral coefficient matrix of some inequality description of \added{a rational polytope} $P$. \added{The latter result was improved to $\bigO\big(n^3\Delta^2 \ln(n\Delta)\big)$ by Dadush and H\"ahnle~\cite{dadush2016}}.

While not presenting a new bound on the diameters of polytopes, 
the first main contribution (see Theorem~\ref{th:bottom-top-path}, and in particular its Corollary~\ref{cor:tot-nondeg}) we make is  to prove 
\replaced{the following (where a $q$-dimensional polytope is \emph{simple} if each of its vertices is contained in exactly $q$ facets):}{that} for each $d$-dimensional polytope $P$ in $\Real^d$ with $n$ facets that satisfies a certain non-degeneracy assumption there is a \replaced{simple}{non-degenerate} $(d+1)$-dimensional polytope $Q$ with $n+1$ facets and diameter at most $2(n-d)$ that can be mapped linearly to $P$ (\added{i.e., }$Q$ is an \emph{extension} or \emph{extended formulation} of $P$). We further show in Theorem \ref{th:stron_poly_rock} that such an extension $Q$ is even computable in strongly polynomial time, if a vertex of $P$ is specified within the input.

\replaced{We remark}{Consider} that without requiring the number of facets and the dimension of $Q$ to be polynomial\added{ly bounded} in $n$ and $d$, \added{the polytope} $Q$ can \added{trivially} be chosen as a high-dimensional simplex (which even has diameter one). \added{However, the number of facets of that simplex} \replaced{equals}{with} the number of vertices of $P$ \deleted{many facets} (which might easily be exponential in $n$ and $d$). Similarly, without the \deleted{non-degeneracy} requirement \replaced{of}{on} $Q$ \added{being simple} such a construction can trivially be obtained by forming a pyramid over $P$ (which has diameter at most two). \added{On the other hand, the results in \cite{KaibelW15a} show that the combination of those two requirements on $Q$ imply some (non-degeneracy) condition on $P$.}
\deleted{We elaborate below why the restriction on the dimension and the non-degeneracy property of $Q$ makes the result interesting.}
%We elaborate below why the restriction on the dimension and the \deleted{non-degeneracy} property of \replaced{being simple}{$Q$} makes the result interesting. Consider that without requiring the number of facets and the dimension of $Q$ to be polynomial in $n$ and $d$, $Q$ can be chosen as a high-dimensional simplex (which even has diameter one). \added{However, the number of facets of the said simplex} \replaced{equals}{with} the number of vertices of $P$ \deleted{many facets} (which might easily be exponential in $n$ and $d$). Similarly, without the \deleted{non-degeneracy} requirement \replaced{of}{on} $Q$ \added{being simple} such a construction can trivially be obtained by forming a pyramid over $P$ (which has diameter at most two). We elaborate below why the restriction on the dimension and the \deleted{non-degeneracy} property of \replaced{being simple}{$Q$} makes the result interesting.
%Note that without the non-degeneracy requirement on $Q$ such a construction can trivially be obtained by forming a pyramid over $P$ (which even has diameter two). We elaborate below why the non-degeneracy property of $Q$ makes the result interesting. 

The  motivation for the interest in the diameter of polytopes is that it necessarily is bounded by a polynomial in $n$ (i.e., the \emph{polynomial Hirsch-conjecture} must be true) if a polynomial time pivot rule for the simplex algorithm for linear programming exists. \added{This is due to the fact that the simplex algorithm proceeds along (monotone) paths in the graph of a polytope. And therefore, for each polytope $P$ the worst-case running time of the simplex algorithm (over all linear objective functions) is bounded from below by the diameter of $P$.} The search for such a pivot rule is considered highly relevant in \deleted{the} light of the question \added{of} whether there is a  \emph{strongly polynomial} time algorithm for linear programming (i.e.\added{,} an algorithm for which not only the number of bit-operations can be bounded by a polynomial in the entire input length, but also the number of its arithmetic operations can be bounded by a polynomial in the number of inequalities \added{and in the number of variables}), 
which is most prominent in Smale's list of 18 open problems for the 21st century~\cite{smale98}.

\deleted{The \emph{bas\replaced{i}{e}s-exchange graph} of  a $d$-dimensional polytope $P\subseteq \Real^d$ defined by an system $Ax \le b$ has the feasible bases of $Ax \le b$ as its nodes, where two bases are adjacent if and only if their symmetric difference consists of exactly two indices. If $P$ is simple then the graph of $P$ is isomorphic to the bas\replaced{i}{e}s-exchange graph for any irredundant system defining $P$. The \emph{diameter} of a (bases-exchange) graph is  the smallest number $\delta$ for which any pair of nodes in the (bases-exchange) graph is connected by a path of length at most $\delta$. The \emph{monotone diameter} of a bases-exchange graph is the smallest number $\vec{\delta}$ such that for each linear objective function and for every node in the bases-exchange graph there is a monotone path of length at most $\vec{\delta}$
to some basis defining an optimal solution, where \emph{monotone} means that only edges are used that improve the objective function or that connect two bases defining the same vertex. Clearly, the diameter of the graph of a polytope is a lower bound on the diameter of any corresponding 
bas\replaced{i}{e}s-exchange graph, which in turn is a lower bound for the monotone diameter of the latter. We show in Theorem \ref{th:mon_diam} that one can further lift (by spending one more dimension) the  extensions described in Theorem~\ref{th:bottom-top-path} 
 such that even a monotone path of length at most $2(n-d+1)+1$ to some optimal vertex  exists, for each linear objective function and each start vertex,
The simplex algorithm in fact proceeds along monotone paths in the bas\replaced{i}{e}s-exchange graph. 
Therefore, for each polytope the worst-case running time of the simplex algorithm (over all linear objective functions) is bounded from below by the monotone diameter of the bas\replaced{i}{e}s-exchange graph. Consequently, a variant of the simplex algorithm that runs in polynomially (in the number of inequalities) bounded time for all linear programs can only exist if there is a polynomial (in the number of facets) upper bound on the monotone diameters of the bas\replaced{i}{e}s-exchange graphs of polytopes, and thus on the diameters of the graphs of polytopes.  }

Our second main contribution is to use the extensions of small diameters that we \replaced{described}{devise} in the first part in order to show 
\replaced{the following: in order to devise a strongly polynomial time algorithm for the general linear programming problem it suffices to find a polynomial time pivot rule for the simplex algorithm just for the class of linear programs whose feasible region is a simple polytope whose diameter is bounded linearly in the number of inequalities (see Theorems~\ref{th:reduction_LP_to_linear_diameter} and \ref{th:reduction_to_pivot_rule}).}{that if there is a pivot rule for the simplex algorithm for which one can bound the number of steps polynomially in the diameter of the graph of the polyhedron formed by the feasible solutions  then the general linear programming problem can be solved in strongly polynomial time (see Theorems~\ref{th:reduction_LP_to_linear_diameter} and \ref{th:reduction_to_pivot_rule}).} 
%that if there is a pivot rule for the simplex algorithm for which one can bound the number of steps polynomially in the diameter of the graph of the polyhedron formed by the feasible solutions \deleted{(or even only in the monotone diameter of the bas\replaced{i}{e}s-exchange graph)} then the general linear programming problem can be solved in strongly polynomial time (see Theorems~\ref{th:reduction_LP_to_linear_diameter} and \ref{th:reduction_to_pivot_rule}). 
Thus, even if it turns out that the polynomial Hirsch-conjecture fails, it still might be possible to come up with a strongly polynomial time algorithm for general linear programming by devising a polynomial \added{time} pivot rule for only  \replaced{that}{the} special class of problems\deleted{ exhibiting small \deleted{(monotone)} diameters}. 

The paper is organized as follows. 
Section \ref{sec:rock} introduces a special type of extended formulations that we call \emph{rock extensions} which will allow us to realize the claimed diameter bounds. Special properties of rock extensions for two- and three-dimensional polytopes are discussed in Section \ref{sec:2-3-dim}.
In Section \ref{sec:rational-encoding} we ensure that the procedure we devise in \replaced{Section~\ref{sec:rock}}{the first section} for obtaining a rock extension with certain  additional properties (that we need to maintain in our inductive construction) can be adjusted to produce a rational extension having its encoding size polynomially bounded in the encoding size of the input. We \deleted{eventually} consider \replaced{a reduction of the general linear programming problem to its special case for rock extensions}{computational aspects} in Section \ref{sec:alg} and upgrade our extensions to allow for \added{short }monotone \deleted{short }paths in Section \ref{sec:monotone}\deleted{ in order to establish the results announced above}.

\section{Rock extensions}\label{sec:rock}

For a row-vector \replaced{$\alpha \in \Real^{1\times d}\setminus\{\zerovec^T\}$}{$\alpha \in \mathbb{R}^d \setminus \{\zerovec\}$} and a number $\beta \in \Real$ we call the sets $H^{\le}(\alpha,\beta) := \{ x \in \added{\Real^d}\mid \alpha x \le \beta\}$ and $H^{=}(\alpha,\beta) := \{ x \in \added{\Real^d}\mid \alpha x = \beta\}$ a \emph{halfspace} and a \emph{hyperplane}, respectively. Moreover we naturally extend the above notation by $H^{\sigma}(\alpha,\beta)$ to denote the set $\{ x \in R^d\mid \alpha x \, \sigma \, \beta\}$ where $\sigma \in \{<,>\}$. For $A\in \Real^{m\times d}$ and $b \in \Real^m$ we use $P^{\le}(A,b)$ to denote the polyhedron $\{x\in \Real^d \mid Ax\le b\}$. For $A \in \Real^{m\times d}$ and $I \subseteq [m]$ we use $A_I$ to denote the submatrix of $A$ formed by the rows of $A$ indexed by $I$. \added{In case $I=\{i\}$ for an $i\in[m]$ we simplify the notation to $A_i$.} Let $Ax \le b$ be a system of linear inequalities with $A \in \Real^{m\times d}, b \in \Real^m$. Then we call the family of hyperplanes $H^{=}(A_{1}, b_1),\dots,H^{=}(A_{m}, b_m)$ the \emph{hyperplane arrangement associated with $Ax \le b$} and denote it by $\mathcal{H}(A,b)$. We call a $d$-dimensional polytope \added{a} \emph{$d$-polytope}.

We \replaced{start}{commence} by introducing two types of systems of linear inequalities which will be crucial throughout the work.
\begin{definition}
A feasible system of linear inequalities $Ax \le b$ with $A \in \Real^{m\times d}$, $b \in \Real^m$ is said to be \textbf{non-degenerate} if each vertex of $\mathcal{H}(A,b)$ is contained in exactly $d$ of the $m$ hyperplanes.
The system is called \textbf{totally non-degenerate}, if, for any collection of $k$  hyperplanes of $\mathcal{H}(A,b)$, their intersection is a $(d-k)$-dimensional affine subspace for $1 \le k \le d$ and the empty set for $k>d$.
\end{definition}

Note that total non-degeneracy implies non-degeneracy. \added{Additionally, observe that non-degeneracy can be achieved using perturbation arguments. We elaborate on that in Section \ref{sec:alg} in more detail.} We introduce corre\deleted{p}s\added{p}onding notions for polytopes in the following way. 

\begin{definition}\label{def:non_deg_poly}
A polytope is called \textbf{strongly non-degenerate} resp. \textbf{totally non-degenerate} if there is a \textbf{non-degenerate} resp. \textbf{totally non-degenerate} system of linear inequalities defining it.
\end{definition}

 We observe that each strongly non-degenerate polytope is full-dimensional and simple. 
 
 \begin{definition}\label{def:simpl-cont}
A non-degenerate system $Ax \le b$ with $A \in \Real^{m\times d}, b \in \Real^m$ is said to be \textbf{simplex-containing} if there exists a subset $I \subseteq [m]$ of with $|I|=d+1$ such that $P^{\le}(A_{I},b_I)$ is a $d$-simplex. 
 \end{definition}
 
 Note that
 each 
 strongly
 non-degenerate polytope $P$ can be described by a simplex-containing non-degenerate system $Ax \le b$. This is due to the fact, that one can add $d+1$ redundant inequalities defining a simplex $S \supseteq P$ to any non-degenerate description of $P$ \replaced{without violating}{maintaining} non-degeneracy (in fact later we establish, that a single auxiliary inequality is enough to ensure the simplex-containing property).
 In addition, it turns out that any \emph{totally} non-degenerate system defining a polytope is simplex-containing. We proceed with a proof of this fact.

\begin{proposition}\label{prop:ineq-del}
Let \replaced{$P \subseteq \Real^d$}{$P$} be a $d$-polytope given by a totally non-degenerate system $Ax \le b$ of $m$ linear inequalities. There exists a subset $I \subseteq [m]$ with $|I| = d+1$ such that the polyhedron $P^{\le}(A_{I}, b_{I})$ is bounded.
\end{proposition}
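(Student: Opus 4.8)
The plan is to recast the conclusion as a statement about the rows of $A$ and then to run a minimal-support argument inside the space of linear dependences of those rows. Since $P^{\le}(A_I,b_I)\supseteq P$ is always non-empty, it is bounded if and only if its recession cone $\{x\in\Real^d\mid A_Ix\le\zerovec\}$ equals $\{\zerovec\}$, and for $|I|=d+1$ this is in turn equivalent to the existence of a vector $\lambda>\zerovec$ with $\sum_{i\in I}\lambda_iA_i=\zerovec^T$. One direction is elementary: if such a $\lambda$ exists and $A_Ix\le\zerovec$, then $\sum_{i\in I}\lambda_i(A_ix)=0$ is a sum of non-positive terms with positive coefficients, so $A_ix=0$ for every $i\in I$, and as any $d$ rows of $A$ are linearly independent by total non-degeneracy this forces $x=\zerovec$; the reverse implication is a standard theorem of the alternative (Stiemke's lemma), using that $A_I$ has full column rank $d$. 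Hence it suffices to find $I\subseteq[m]$ with $|I|=d+1$ carrying a strictly positive vanishing combination of the corresponding rows. Such a combination exists for $I=[m]$: since $P$ is bounded, $\zerovec^T$ lies in the interior of $\conv\{A_1,\dots,A_m\}$, so there is $z>\zerovec$ in $\Real^m$ with $\sum_{i=1}^m z_iA_i=\zerovec^T$.

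Let $V:=\{y\in\Real^m\mid \sum_{i=1}^m y_iA_i=\zerovec^T\}$, which contains the strictly positive $z$, and choose $y^{\star}\in V$ with $y^{\star}\ge\zerovec$ and $y^{\star}\neq\zerovec$ whose support $S:=\{i\in[m]\mid y^{\star}_i>0\}$ is inclusion-minimal among the supports of all such vectors. I claim $|S|=d+1$; then $I:=S$ completes the proof, with $\lambda=(y^{\star}_i)_{i\in S}$ the required positive combination. The bound $|S|\ge d+1$ is immediate, since $\sum_{i\in S}y^{\star}_iA_i=\zerovec^T$ is a non-trivial linear dependence among $\{A_i\mid i\in S\}$ and total non-degeneracy rules out a non-trivial dependence among $d$ or fewer rows. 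For $|S|\le d+1$, assume $|S|\ge d+2$. By total non-degeneracy $\{A_i\mid i\in S\}$ has rank $d$, so its space of linear dependences $\{w\in\Real^S\mid \sum_{i\in S}w_iA_i=\zerovec^T\}$ has dimension $|S|-d\ge2$ and thus contains some $w$ not proportional to $(y^{\star}_i)_{i\in S}$. Then $(y^{\star}_i)_{i\in S}+tw$ is a dependence for all $t$, is strictly positive at $t=0$, and, since $w$ is not proportional to $(y^{\star}_i)_{i\in S}$, it stays $\ge\zerovec$ and non-zero as $t$ is moved in an appropriate direction until one of its coordinates first vanishes. Padding with zeros outside $S$ makes the resulting vector an element of $V$ that is non-negative, non-zero, and supported on a proper subset of $S$, contradicting minimality of $S$.

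The reductions in the first paragraph are routine. What makes the statement non-obvious is that $d+1$ rows suffice at all: for a general bounded system one can only guarantee $2d$ of them (as the $d$-cube shows), and total non-degeneracy is precisely what excludes this, forcing every inclusion-minimal support in $V$ to have size $d+1$. The one technical step needing care is the sliding argument, where one distinguishes whether $w$ has coordinates of both signs on $S$ (then the admissible range of $t$ is a bounded interval and either endpoint works) or $w$ is sign-definite on $S$ (then one slides towards the endpoint at which some coordinate of $(y^{\star}_i)_{i\in S}$ is driven to $0$), and verifies in each case that the terminal vector is non-zero, which is exactly where non-proportionality of $w$ enters. Alternatively, one can observe that the rows of $A$ realize the uniform matroid $U_{d,m}$, so every elementary vector of $V$ has support of size $d+1$, and then appeal to Rockafellar's conformal decomposition of $z$ into elementary vectors of $V$, each of which is conformal to $z$ and hence non-negative, so that any one of their supports serves as $I$.
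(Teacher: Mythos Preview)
Your proof is correct and takes a genuinely different route from the paper's. The paper argues geometrically: pick any facet-defining inequality $A_ix\le b_i$, observe that by total non-degeneracy the minimum of $A_ix$ over $P$ is attained at a \emph{unique} vertex $v$, and then the $d$ inequalities tight at $v$ together with $A_ix\le b_i$ bound a simplex (every edge at $v$ is $A_i$-increasing, so every extreme ray of the feasible cone at $v$ meets $H^{=}(A_i,b_i)$). This is short, constructive, and names the simplex explicitly in terms of data already visible on~$P$. Your argument is algebraic: total non-degeneracy forces the rows of $A$ to realize the uniform matroid $U_{d,m}$, so every minimal-support linear dependence among the rows has support size exactly $d+1$, and boundedness of $P$ supplies a strictly positive dependence on all of $[m]$ from which a minimal one can be extracted. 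What you gain is a clean conceptual explanation for why $d+1$ inequalities suffice (circuits of $U_{d,m}$) and a connection to conformal decomposition; what you lose is the explicit geometric description of~$I$. It is worth noting that an earlier draft of the paper (still visible in the \verb|\replaced| markup) argued via polar duality and Carath\'eodory's theorem, which is much closer in spirit to your approach---both reduce to writing $\zerovec$ as a positive combination of $d+1$ rows---so your line of reasoning is certainly natural here.
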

%\end{claim}
\begin{proof}
\replaced{Consider an inequality $A_ix \le b_i$ that defines a facet $F_i$ of $P$. Firstly, we show that the vertex $v \in P$ minimizing $A_ix$ over $P$ is unique. For the sake of contradiction assume that a $k$-dimensional face $F$ with $k\ge 1$ is minimizing $A_i$. Note that $F \not\subset F_i$ holds due to full-dimesionality of $P$, and hence the intersection of $d-k$ hyperplanes $H^=(A_j,b_j), j\in J \subseteq [m], |J|=d-k,$ $i\notin J$ containing $F$ and the hyperplane $H^=(A_i,b_i)$ is empty, which contradicts total non-degeneracy. Then, the $d$ inequalities 
%corresponding to the (unique) basis of $v$ 
in $Ax\le b$ satisfied at equality by $v$ 
and the inequality $A_ix \le b_i$ define a simplex around $P$, since each edge containing $v$ is $A_i$-increasing and therefore each extreme ray of the feasible cone emanating from $v$ intersects $H^=(A_i, b_i)$.}{We can assume  $\zerovec \in \interior(P)$, implying  $P^\circ = \conv\{A_{1}^T,\dots,A_{m}^T\}$ for the polar dual of $P$ (for the theory of polar duality, see, e.g.,~\cite[Chapter~9]{schrijver86}). Since $P$ is bounded, we have  $\zerovec \in P^\circ$ (even $\zerovec \in \interior(P^\circ)$). Hence, by Carath\'eodory's theorem there  exists some subset $I \subseteq [m]$ with $|I| \le d+1$ such that $\zerovec \in Q:=\conv\{A_{i}^T\mid i\in I\}$. In fact, we have $\zerovec \in \interior(Q)$, since otherwise there was some proper subset $J \subsetneq I$ with $\zerovec \in \conv\{A_{i}^T\mid i\in J\}$ implying the contradiction $\rank(A_{J}) < |J| \le d$ to the non-degeneracy of $Ax \le b$. But  $\zerovec \in \interior(Q)$ in turn implies that $P^{\le}(A_{I}, b_{I}) = Q^\circ$ is bounded, which in particular infers $|I| = d+1$} 

\end{proof}

%In other words, from every strongly simple $d$-polytope with at least $d+1$ facets one can delete all but some $d+1$ facets one after another, obtaining a $d$-simplex in the end. %Note that the assumption on the hyperplanes being in general position is not a string one, since one can slightly perturb the hyperplanes to achieve it.
\newpage
Next we introduce a special type of extensions we will be working with.

\begin{definition}
Let $P$ be the polytope defined by a system $Ax \le b$ with $A \in \Real^{m \times d},b \in \Real^m$. Any polytope $Q := \{(x,z) \in \Real^{d+1} \mid Ax + a z \le b, z \ge 0\}$ with $a \in \Real^m_{>0}$ will be called a \textbf{rock extension} of $P$.
\end{definition}

\begin{figure}[h]
\centering
\includegraphics[width=.5\textwidth]{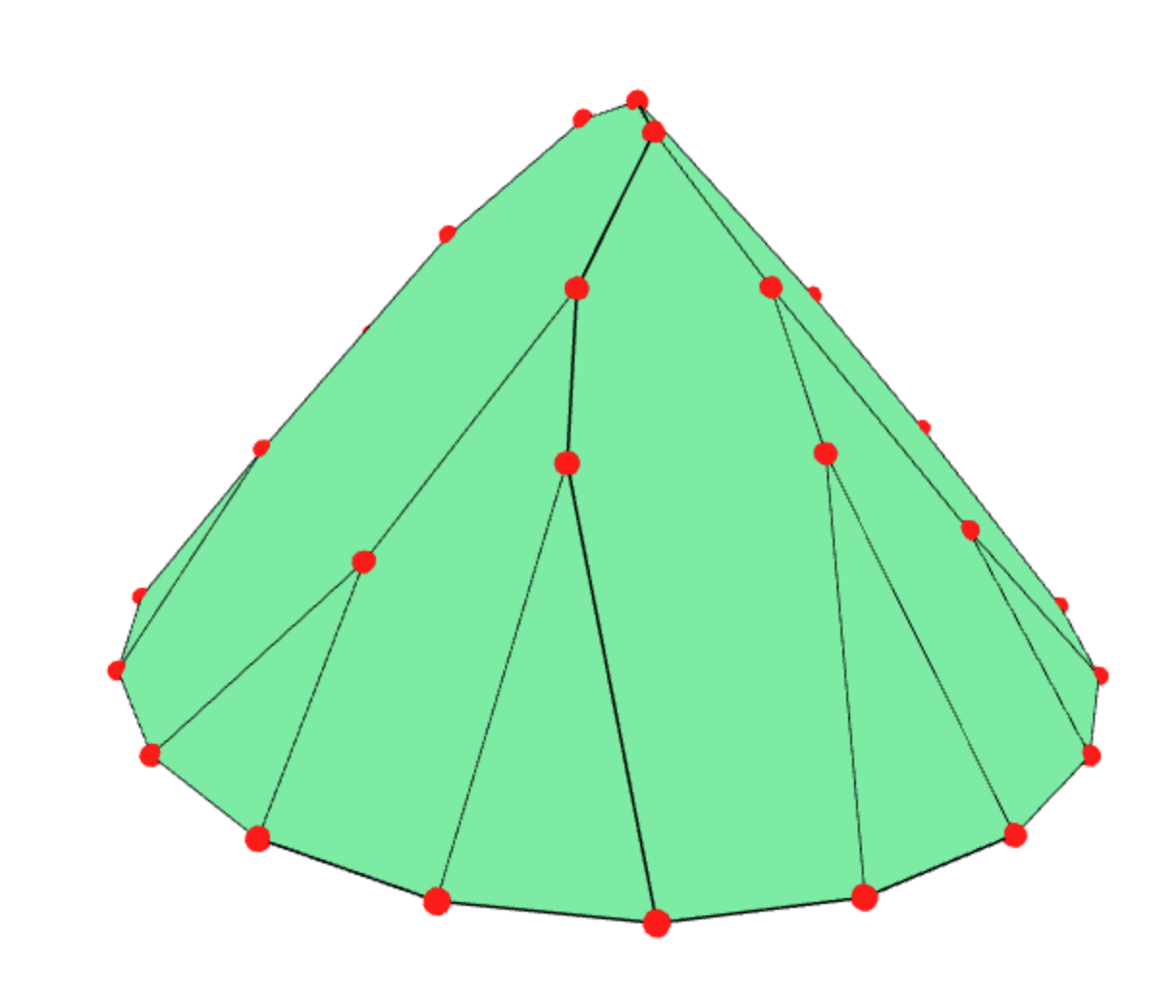}
\caption{A rock extension of the regular $20$-gon.}
\label{fig:rock-24-gon}
\end{figure}

Note that a rock extension $Q$ together with the orthogonal projection on\added{to} the first $d$ coordinates indeed provides an extended formulation of $P$. \added{We henceforth assume that}\deleted{If} $P$ is a full-dimensional $d$-polytope\replaced{. T}{(what we assume henceforth), t}hen $Q$ is a $(d+1)$-dimensional polytope that has at most $m+1$ facets including the polytope $P$ itself (identified with $P\times \{0\}$) as the one defined by the inequality $z\ge 0$. In case $Ax \le b$ is an irredundant description of $P$, a rock extension $Q$ has exactly $m+1$ facets defined by $z\ge 0$ and $A_i x+a_i z\le b_i$ for $i\in [m]$, where the latter $m$ inequalities are in one-to-one correspondence with the facets of $P$. See Figure \ref{fig:rock-24-gon} for an illustration.

We call the facet $P$ of $Q$ the \emph{base} and partition the vertices of $Q$ into \emph{base vertices} and \emph{non-base vertices} accordingly. A vertex of $Q$ with maximal $z$-coordinate is called a \emph{top vertex}. A path in the graph of a rock extension will be called \emph{$z$-increasing} if the sequence of $z$-coordinates of vertices along the path is strictly increasing. To shorten our notation, we denote a hyperplane $\{(x,z)\in \Real^{d+1} \mid z = h\}$ and a halfspace  $\{(x,z)\in \Real^{d+1} \mid z \le h\}$ by $\{z = h\}$ and  $\{z \le h\}$, respectively. We also use the notation $B^{\added{d}}_\epsilon(q)$ for the $d$-dimensional open Euclidean ball of radius $\epsilon$ with center $q$.%hyperplane/halfspace $\{(x,z)\in \Real^{d+1} \mid z \stackrel{=/\le}{\dots} h\}$ by $\{z \stackrel{=/\le}{\dots} h\}$

\begin{definition}
Let $\epsilon>0$ be a  positive number. 
We say that a rock extension $Q$ of $P$ is \textbf{$\epsilon$-concentrated} around $(o,h)\in \Real^{d}\times \Real_{>0}$ if $(o,h)$ is the unique top vertex of $Q$, we have $B^{\added{d}}_{\epsilon}(o) \subseteq P$, and all non-base vertices of $Q$ are contained in the open ball $B^{\added{d+1}}_{\epsilon}\big((o,h)\big)$. 
\end{definition}

It turns out that maintaining \added{the} $\epsilon$-concentrated \replaced{property}{rock extensions} opens the door for inductive constructions of \added{``well-behaved''} rock extensions. \deleted{More precisely, we are going to establish by induction on the number of inequalities t}\added{T}he following result \deleted{which} \replaced{establishes}{makes up} the core of our contributions. 

\begin{theorem}\label{th:bottom-top-path}
For every $d$-polytope $P$ given by a simplex-containing non-degen\-erate system $Ax \le b$ of $m$ linear inequalities, every $\epsilon >0$, and every point $o$ with 
$B^{\added{d}}_{\epsilon}(o) \subseteq P$, there exi\added{s}ts a simple rock extension $Q$ that is $\epsilon$-concentrated around $(o,1)$ so that for each vertex of $Q$ there exists a $z$-increasing path of length at most $m-d$ to the top vertex $(o,1)$.
\end{theorem}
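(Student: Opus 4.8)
The plan is to prove, by induction on the number $m$ of inequalities, a slightly strengthened statement: fixing once and for all a set $I\subseteq[m]$ with $|I|=d+1$ that witnesses the simplex‑containing property, there is a simple rock extension $Q=\{(x,z)\mid Ax+az\le b,\ z\ge 0\}$ of $P$ which is $\epsilon$‑concentrated around $(o,1)$, admits $z$‑increasing paths of length at most $m-d$ from every vertex to $(o,1)$, and whose coefficient vector additionally satisfies $a_i=b_i-A_io$ for all $i\in I$. (Since $B^{d}_\epsilon(o)\subseteq P$ we have $o\in\interior(P)$, i.e. $A_io<b_i$ for every $i$, so this in particular forces $a_i>0$.) The point of the last condition is that it makes the $d+1$ hyperplanes $H^{=}(A_i,\,b_i-a_iz)$, $i\in I$, all pass through $(o,1)$; consequently the edge that leaves a base vertex $v$ in the direction of increasing $z$ (call it the \emph{strut} of $v$) is "aimed at'' $(o,1)$, and for a vertex $v$ of $P^{\le}(A_I,b_I)$ that also lies in $P$ this strut is precisely the lift of the segment from $v$ to $o$ and runs all the way up to $(o,1)$.

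\emph{Base case $m=d+1$.} Then $I=[m]$ and $P$ itself is a $d$-simplex. Put $a_i:=b_i-A_io$. Rewriting the $i$-th inequality as $A_i(x-oz)\le b_i(1-z)$ and using the (essentially unique, all‑positive) relation $\sum_{i\in I}\lambda_iA_i=\zerovec^T$ among the facet normals of the simplex, one sees immediately that $Q=\conv\!\big(P\times\{0\}\cup\{(o,1)\}\big)$, a $(d+1)$-simplex; hence $Q$ is simple, $(o,1)$ is its unique top vertex and its only non-base vertex, and every base vertex is joined to $(o,1)$ by a $z$-increasing edge. So $Q$ is $\epsilon$-concentrated and the path bound $m-d=1$ holds.

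\emph{Inductive step.} Let $m>d+1$ and pick $\ell\in[m]\setminus I$. The subsystem $A_{[m]\setminus\{\ell\}}x\le b_{[m]\setminus\{\ell\}}$ is again non-degenerate and simplex‑containing with the same $I$ (so it defines a $d$-polytope $P'\supseteq P$, bounded because $I\subseteq[m]\setminus\{\ell\}$ and $P^{\le}(A_I,b_I)$ is, and full-dimensional since $B^{d}_\epsilon(o)\subseteq P\subseteq P'$); the induction hypothesis yields a simple rock extension $Q'$ of $P'$ with all the listed properties, $z$-increasing paths of length $\le m-1-d$, and $a'_i=b_i-A_io$ for $i\in I$. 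Set $a_\ell:=(b_\ell-A_\ell o)-\eta$ for a small $\eta>0$ to be fixed, keep $a_i:=a'_i$ for $i\ne\ell$, and let $Q:=Q'\cap H^{\le}\!\big((A_\ell,a_\ell),b_\ell\big)=\{(x,z)\mid Ax+az\le b,\ z\ge 0\}$, a rock extension of $P$ with the prescribed $I$-coefficients. One then shows that for all sufficiently small $\eta$ outside a finite exceptional set everything is preserved. Since $A_\ell o+a_\ell=b_\ell-\eta<b_\ell$, the point $(o,1)$ lies strictly inside the new halfspace, so it remains a (simple) vertex of $Q$ lying on exactly the $I$-facets, and as $Q\subseteq Q'$ it is still the unique top vertex. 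The newly created vertices lie on $H:=H^{=}((A_\ell,a_\ell),b_\ell)$ where $H$ crosses an edge of $Q'$; $H$ passes within $O(\eta)$ of $(o,1)$, and an edge of $Q'$ it can cross is either an edge of the small top cap of $Q'$ (already inside $B^{d+1}_\epsilon((o,1))$) or the strut of a base vertex $v$ of $P'$ with $A_\ell v>b_\ell$ — and such a strut, being aimed at $(o,1)$ because $a'_i=b_i-A_io$ for $i\in I$, reaches the $O(\eta)$-neighbourhood of $(o,1)$ before it can meet $H$, so the crossing height is $1-O(\eta)$; hence $\epsilon$-concentration holds for $\eta$ small. (If $A_\ell x\le b_\ell$ is redundant for $P$ the cut may be empty, and the same estimates dispose of the remaining sub-cases; this is the only place where redundant inequalities need care.) For simplicity: by non-degeneracy of the original system no vertex of $P'$ lies on $H^{=}(A_\ell,b_\ell)$, so for $\eta$ outside a finite set $H$ contains no vertex of $Q'$ and meets every edge of $Q'$ transversally, whence each vertex of $Q$ lies on exactly $d+1$ of its facets.

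The main obstacle is the path bound. It suffices to reach $(o,1)$ from every non-base vertex $w\ne(o,1)$ by a $z$-increasing path of length $\le m-d-1$, because every base vertex has a unique $z$-increasing edge, its strut, leading in one step either directly to $(o,1)$ or to such a $w$. For a non-base $w$ lying on the $d+1$ facets indexed by $K\subseteq[m]$ I would track the quantity $|K\setminus I|$ and prove that $w$ has a $z$-increasing edge to a vertex on a facet set $K'$ with $|K'\setminus I|<|K\setminus I|$; iterating drives $|K\setminus I|$ to $0$, forcing $K=I$ and hence the vertex to be $(o,1)$, in at most $|K\setminus I|\le|[m]\setminus I|=m-d-1$ steps. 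The sign of the $z$-component of the edge direction obtained by dropping facet $j\in K$ equals $-\operatorname{sign}(\mu_j)$, where $e_{d+1}=\sum_{i\in K}\mu_i(A_i,a_i)$ (possible since $w$ is simple); comparing this expansion with the all-positive relation $e_{d+1}=\sum_{i\in I}\lambda_i(A_i,a_i)$ valid at $(o,1)$ — the index sets differing only in $K\setminus I$ versus $I\setminus K$ — should produce some $j\in K\setminus I$ with $\mu_j<0$ and show that the corresponding edge swaps a facet out of $K\setminus I$. Making this comparison precise, and compatible both with the genericity of $\eta$ and with the redundant‑facet case, is the technical heart of the argument. Reversing the resulting $z$-increasing paths finally gives $z$-decreasing paths of length $\le m-d$ from $(o,1)$ to every vertex, hence the diameter bound $2(m-d)$; the refinement to rational data of polynomially bounded encoding size is deferred to Section~\ref{sec:rational-encoding}.
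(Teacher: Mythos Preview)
Your approach diverges from the paper's in two essential ways, and the second contains a genuine gap.

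\textbf{Concentration.} You apply the induction hypothesis to $P'$ with the \emph{same} $\epsilon$, then take $\eta$ small. But the slack of a non-base vertex $w\in B^{d+1}_\epsilon((o,1))$ of $Q'$ with respect to the new inequality is $\eta-(A_\ell,a_\ell)\bigl(w-(o,1)\bigr)$, which can be as small as $\eta-\epsilon\|(A_\ell,a_\ell)\|$; for small $\eta$ this is negative and $w$ gets cut off, wrecking the combinatorics you rely on. Conversely, if $\eta$ is large enough to keep all non-base vertices of $Q'$, the cut points on struts need not lie in $B^{d+1}_\epsilon((o,1))$. The paper resolves this tension by invoking the hypothesis with a \emph{smaller} radius $\mu\le\epsilon/D$ for $Q'$ and making the new hyperplane tangent to $B^{d+1}_\mu((o,1))$; Claim~\ref{cl:constant} then bounds the new vertices within $\mu D\le\epsilon$. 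Also, your ``struts aimed at $(o,1)$'' heuristic only holds literally for struts lying on $d$ facets all indexed by $I$; for other base vertices it is merely approximate, and you never quantify that approximation.

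\textbf{Path bound.} This is the real problem. You propose to track the potential $|K\setminus I|$ and claim that at any non-base vertex $w\ne(o,1)$ one can (i) find $j\in K\setminus I$ with $\mu_j<0$, and (ii) follow the resulting $z$-increasing edge to a vertex with strictly smaller potential. Neither step is established. For (i), the comparison of the two expansions of $e_{d+1}$ gives a linear dependence among $\{(A_i,a_i):i\in K\cup I\}$, but nothing forces a negative coefficient specifically on an index in $K\setminus I$; the $z$-increasing edges that certainly exist (since $w$ is not the top) could all come from dropping facets in $K\cap I$. For (ii), even if such $j$ exists, dropping it picks up a new facet $j'$, and you give no reason why $j'\in I$; if $j'\notin I$ then $|K'\setminus I|=|K\setminus I|$ and the potential stalls. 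You acknowledge this is ``the technical heart'' yet leave it open --- but it is not a detail: without it the bound $m-d$ simply does not follow.

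The paper avoids this altogether by carrying the paths themselves through the induction. The key structural observation you are missing is the $2$-face argument: when the new hyperplane cuts a base edge $e$ of $\widetilde{Q}$ at $v$ and the increasing edge $g=\{u,w\}$ of the cut-off endpoint $u$ at $y$, there is a $2$-face of $\widetilde{Q}$ containing both $e$ and $g$, so $\{v,y\}$ is an edge of $Q$; replacing $\{u,w\}$ in $u$'s old path by $\{v,y\},\{y,w\}$ yields the required path for $v$ (and for $y$) of length at most $m-d$. This local surgery, not a global potential, is what makes the induction close.
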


For \emph{totally} non-degenerate polytopes the latter result immediately implies the following  bound  that is only twice as large as the bound originally conjectured by Hirsch. \deleted{For a  more general result for all strongly non-degenerate polytopes along with considerations of algorithmic complexity see Section~\ref{sec:alg}.}

\begin{corollary}\label{cor:tot-nondeg}
Each totally non-degenerate $d$-polytope $P$ with $n$ facets admits a simple $(d+1)$-dimensional extension $Q$ with $n+1$ facets and diameter at most $2(n-d)$.
\end{corollary}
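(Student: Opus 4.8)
The plan is to obtain the corollary directly from Theorem~\ref{th:bottom-top-path} by feeding it an \emph{irredundant} simplex-containing non-degenerate description of $P$, and then to assemble the two $z$-increasing paths provided by the theorem into a short path between any two vertices of the extension.

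First I would pass to an irredundant description. Since $P$ is totally non-degenerate it is in particular strongly non-degenerate, hence full-dimensional. Fix any totally non-degenerate system defining $P$. By total non-degeneracy no two of its hyperplanes coincide, and no facet of $P$ (which is $(d-1)$-dimensional) can be contained in two distinct hyperplanes of the arrangement (their intersection is $(d-2)$-dimensional); therefore each facet of $P$ is cut out by exactly one of the inequalities, and the $n$ facet-defining inequalities among them form an irredundant system $Ax \le b$ with $n$ rows. Being a sub-collection of a totally non-degenerate family of hyperplanes, $\mathcal{H}(A,b)$ is again totally non-degenerate. By Proposition~\ref{prop:ineq-del} there is $I \subseteq [n]$ with $|I| = d+1$ such that $P^{\le}(A_I,b_I)$ is bounded; as it contains the full-dimensional polytope $P$, it is a bounded full-dimensional polyhedron in $\Real^d$ with at most $d+1$ facets, hence a $d$-simplex. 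Thus $Ax \le b$ is a simplex-containing non-degenerate system of $n$ inequalities defining $P$, in the sense of Definition~\ref{def:simpl-cont}.

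Next, since $P$ is full-dimensional I can choose some $\epsilon > 0$ and a point $o$ with $B^d_\epsilon(o) \subseteq P$, and apply Theorem~\ref{th:bottom-top-path} to this data. This yields a simple rock extension $Q$ of $P$ that is $\epsilon$-concentrated around $(o,1)$, such that every vertex of $Q$ is connected to the top vertex $(o,1)$ by a $z$-increasing path of length at most $n-d$. By the remarks following the definition of rock extensions, $Q$ is a $(d+1)$-polytope and, since $Ax \le b$ is irredundant, it has exactly $n+1$ facets; together with the orthogonal projection onto the first $d$ coordinates it is an extension of $P$. Finally, for the diameter bound: given vertices $u,v$ of $Q$, concatenate the length-$(\le n-d)$ path from $u$ to $(o,1)$ with the reverse of the length-$(\le n-d)$ path from $v$ to $(o,1)$ to obtain a walk of length at most $2(n-d)$ from $u$ to $v$, whence $\mathrm{diam}(Q) \le 2(n-d)$. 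I do not anticipate a real obstacle here, as the statement is essentially a specialization of Theorem~\ref{th:bottom-top-path}; the only point that needs care is the passage to the irredundant totally non-degenerate subsystem, which is what makes the facet count $n+1$ rather than $m+1$.
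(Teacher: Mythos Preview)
Your proof is correct and follows the same route as the paper, which treats the corollary as an immediate consequence of Proposition~\ref{prop:ineq-del} and Theorem~\ref{th:bottom-top-path}. You are in fact more careful than the paper about passing to the irredundant totally non-degenerate subsystem, which is exactly what is needed to get $m=n$ and hence the facet count $n+1$ and the diameter bound $2(n-d)$.
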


\added{For a  more general result for all strongly non-degenerate polytopes along with algorithmic considerations see Section~\ref{sec:alg}. Now we turn to the proof of Theorem \ref{th:bottom-top-path}.}

\begin{proof}[Proof of Theorem \ref{th:bottom-top-path}]
We proceed by induction on \deleted{$m$} \added{the number $m$ of linear inequalities in $Ax\le b$.} 

\replaced{Suppose first that we have $m = d+1$. Then}{In case of $m = d+1$} the polytope $P$ is a $d$-simplex and hence the $(d+1)$-dimensional pyramid $Q$ over $P$ with \added{$(o,1)$ as the top vertex} \deleted{$(o,1)$} has the required properties.

So let us consider the case $m \ge d+2$. Since $Ax \le b$ is simplex-containing, there exists an inequality $A_ix \le b_i$ ($i \in [m]\setminus I$ can be chosen arbitrarily for some $I$ as in Definition \ref{def:simpl-cont}), whose deletion from $Ax \le b$ \replaced{yields a}{results in} system \added{of linear inequalities still} defining a bounded polyhedron $\widetilde{P}$. By the induction hypothesis and due to $B^{\added{d}}_{\epsilon}(o) \subseteq P \subseteq \widetilde{P}$, for every $0 < \mu \le \epsilon$ the polytope $\widetilde{P}$ defined by the simplex-containing non-degenerate system $A_{J}x \le b_J$ with $J := [m]\setminus\{i\}$ admits a simple rock extension $\widetilde{Q}$ that is $\mu$-concentrated around $(o,1)$ with each vertex having a $z$-increasing path of length at most $m-d-1$ to the top vertex $(o,1)$ of $\widetilde{Q}$. 

To complete the proof we \replaced{pick a certain $0<\mu<\epsilon$ and }{will use the inductive construction of $\widetilde{Q}$ for an appropriate choice of $0<\mu<\epsilon$. Then we will} add \deleted{to its inequality description an} \added{the} inequality $A_ix + a_iz\le b_i$ \added{with an appropriate choice of $a_i$ to the inequality description of \added{the $\mu$-concentrated extension} $\widetilde{Q}$ of $\widetilde{P}$} in order to obtain a simple rock extension $Q$ of $P$ that is $\epsilon$-concentrated around $(o,1)$\added{. We then} \deleted{and} show that the vertices of $Q$ admit similar paths to the top vertex as the vertices of $\widetilde{Q}$ do. 

Here we choose the coefficient $a_i>0$ that determines the ``tilt angle'' of the corresponding hyperplane  
%We have to make an appropriate choice for the coefficient $a_i$, determining the ``tilt angle'' of the corresponding hyperplane. Given $\mu$, we can choose $a_i>0$
in such a way that $H^{=}\big((A_i,a_i), b_i\big)$ is tangential to $B^{\added{d+1}}_{\mu}\big((o,1)\big)$ with $B^{\added{d+1}}_{\mu}\big((o,1)\big) \subseteq H^{\le}\big((A_i,a_i), b_i\big)$, \replaced{which is possible}{what indeed can be achieved} since \deleted{due to $\mu < \epsilon$ we have} $B^{\added{d}}_{\mu}(o) \subsetneq B^{\added{d}}_{\epsilon}(o)\subseteq P$ \added{due to $\mu < \epsilon$}. Then the inequality $A_ix + a_iz\le b_i$ will not \replaced{cut off}{cut-off} any non-base vertices from $\widetilde{Q}$ (as they are all contained in $B^{\added{d+1}}_{\mu}\big((o,1)\big)$), and hence $(o,1)$ is the unique top vertex of $Q$ as well. Note that each ``new'' non-base vertex of $Q$ is the intersection of $H^{=}\big((A_i,a_i), b_i\big)$ with the relative interior of some non-base edge of $\widetilde{Q}$ connecting a base vertex of $\widetilde{Q}$ \replaced{cut off}{cut-off} by $H^{\le}\big((A_i,a_i), b_i\big)$ to a non-base vertex contained in $B^{\added{d+1}}_{\mu}\big((o,1)\big)$. %In particular as $\widetilde{Q}$ is simple, $u$ is contained in exactly $d$ facts of $Q$.
%Note that the \replaced{cut off}{cut-off} base vertices of $\widetilde{Q}$ are exactly those vertices of $\widetilde{P}$ contained in $H^{>}(a_i, b_i)$ and hence they are determined by the geometry of $P$ only. Hence if $a_i^Tx \le b_i$ is redundant for $P$, $a_i^Tx + a_iz\le b_i$ is redundant for $Q$ and $Q = \widetilde{Q}$. 
We use the following statement, which will be proven separately.

\begin{claim}\label{cl:constant}
There exists a number $D \ge 7$, such that for every 
%$0<\mu \le \min\big\{\frac{1}{2},\epsilon\big\}$ 
$0<\mu \le \frac{1}{2}$ with 
$\mu < \epsilon$ the Euclidean distance from any ``new'' non-base vertex of $Q$ to $(o,1)$ is less than $\mu D$.
\end{claim}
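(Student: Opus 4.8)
The plan is to bound the distance $\|(v,\zeta) - (o,1)\|$ for a new non-base vertex $(v,\zeta)$ of $Q$ by tracking the two ingredients that determine it: the direction of the non-base edge of $\widetilde{Q}$ along which $(v,\zeta)$ arises, and the position where the tilted hyperplane $H^{=}\big((A_i,a_i),b_i\big)$ cuts that edge. First I would recall the structure established just above the claim: such a new vertex lies in the relative interior of an edge of $\widetilde{Q}$ joining a base vertex $w = (w_0, 0)$ (with $w_0$ a vertex of $\widetilde{P}$ that is cut off by the new inequality) to a non-base vertex $w' = (w_0', \zeta')$ of $\widetilde{Q}$, and by $\mu$-concentration we have $w' \in B^{d+1}_{\mu}\big((o,1)\big)$, so $\|w' - (o,1)\| < \mu$. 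The key quantity to control is therefore $\|w - (o,1)\|$, i.e.\ how far the cut-off base vertex can be from $o$ in the $x$-space (its $z$-coordinate being $0$, it contributes a fixed $1$ to the distance in the $(d+1)$-st coordinate). Since $w_0$ is a vertex of $\widetilde{P} = P^{\le}(A_J, b_J)$, a system of at most $m-1$ inequalities, and $B^d_{\mu}(o) \subseteq B^d_{\epsilon}(o) \subseteq \widetilde{P}$, the point $w_0$ lies in $\widetilde{P}$ but this alone gives no absolute bound — $\widetilde{P}$ could be huge. The resolution is that $\widetilde{P}$, like $P$, is contained in the \emph{fixed} simplex $S := P^{\le}(A_{I'}, b_{I'})$ coming from the simplex-containing property (with $I'$ the $(d+1)$-subset witnessing it, which survives the deletion of index $i$); hence $w_0 \in S$, and $S$ is a fixed compact set independent of $\mu$. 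So there is a constant $R$, depending only on $S$ (equivalently on the input system $Ax \le b$), with $\|w_0 - o\| \le R$ for every such $w_0$, and thus $\|w - (o,1)\| \le \sqrt{R^2 + 1} =: R'$.

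**The convex-combination estimate.**

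Next I would write the new vertex as $(v,\zeta) = \lambda w + (1-\lambda) w'$ for some $\lambda \in (0,1)$ determined by where $H^{=}\big((A_i,a_i),b_i\big)$ meets the segment $[w, w']$. The crucial point is that $\lambda$ cannot be too close to $1$: the hyperplane is tangent to $B^{d+1}_{\mu}\big((o,1)\big)$ and leaves that ball on its feasible side, while $w'$ lies strictly inside the ball, so $w'$ is at signed distance less than $\mu$ below the hyperplane, i.e.\ $b_i - (A_i w_0' + a_i \zeta') < \mu \|(A_i, a_i)\|$. Meanwhile $w$ is cut off, so $A_i w_0 > b_i$ (its $z$-coordinate is $0$), i.e.\ $A_i w_0 - b_i > 0$; but this ``violation'' is bounded above by $\|(A_i,a_i)\| \cdot \mathrm{dist}\big(w, H^{=}\big((A_i,a_i),b_i\big)\big) \le \|(A_i,a_i)\| \cdot \|w - (o,1)\| \le \|(A_i,a_i)\| R'$ using that $(o,1)$ itself lies in the hyperplane (it is on the sphere of tangency). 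Writing the linear functional $g(y) := b_i - (A_i, a_i) y$, we have $g(w) = -(\text{violation})$ with $|g(w)| \le \|(A_i,a_i)\| R'$, and $g(w') > -\mu\|(A_i,a_i)\|$ with $g(w') \le \|(A_i,a_i)\|\|w' - (o,1)\| < \mu \|(A_i,a_i)\|$ (again since $g$ vanishes at $(o,1)$). From $\lambda g(w) + (1-\lambda) g(w') = g(v,\zeta) = 0$ we solve $1 - \lambda = \dfrac{-g(w)}{g(w') - g(w)} = \dfrac{g(w)}{g(w) - g(w')}$; since $g(w) \le 0 < $ well, more carefully $g(w) \le \|(A_i,a_i)\|R'$ and $-g(w) = $ violation $> 0$, and $g(w') - g(w) \ge g(w') > -\mu\|(A_i,a_i)\|$ together with $g(w') - g(w) = $ (full drop across the segment) $\ge$ the violation, giving $1-\lambda \le 1$ trivially but more usefully $1 - \lambda = \dfrac{-g(w)}{-g(w) + g(w')} \le \dfrac{-g(w)}{-g(w)} \cdot \text{(correction)}$ — the clean way is: $1-\lambda \le \dfrac{\mu\|(A_i,a_i)\| + (\text{something})}{\ldots}$. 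Rather than push the sign bookkeeping here, the point I want is the bound $\lambda \|w - (o,1)\| \le C\mu$ for a fixed constant $C$: indeed $\lambda(w - (o,1)) = (v,\zeta) - (o,1) - (1-\lambda)(w' - (o,1))$, and $\zeta \le \zeta' + $ (slope terms), so one estimates $\|(v,\zeta)-(o,1)\| \le \lambda \|w-(o,1)\| + (1-\lambda)\|w'-(o,1)\| \le \lambda R' + \mu$, and separately, projecting onto the line spanned by $(A_i,a_i)$ and using $g((o,1)) = 0$, that $\lambda \cdot (\text{violation at } w) = (1-\lambda)(g(w') ) \le (1-\lambda)\mu\|(A_i,a_i)\|$, so $\lambda \le \dfrac{(1-\lambda)\mu\|(A_i,a_i)\|}{\text{violation}}$; when the violation is bounded below by a fixed multiple of $\lambda$ (which it is, because $w$ is a \emph{vertex} of $\widetilde{P}$ and not on the hyperplane, but this lower bound is what's delicate), we get $\lambda = O(\mu)$ and then $\|(v,\zeta) - (o,1)\| \le \lambda R' + \mu = O(\mu)$, yielding the desired $D$.

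**The main obstacle and how I'd handle it.**

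The hard part is exactly the last point: ensuring that the ``violation'' $A_i w_0 - b_i$ of a cut-off base vertex $w_0$ is bounded \emph{below} in a way that lets $\lambda$ be forced to be $O(\mu)$ — or, turned around, controlling the length and direction of the segment $[w, w']$ so that the cut point is proportionally close to the $w'$ end. Here is where I expect the real argument to go through a \emph{fixed finite set of directions}: although $\widetilde{Q}$ depends on $\mu$, its combinatorial type stabilizes for small $\mu$, and more to the point the base vertices $w_0$ and the edge directions of $\widetilde{P}$ at $w_0$ are determined by the fixed system $A_J x \le b_J$, not by $\mu$; only the ``lifting'' (the $z$-coordinates of the non-base endpoints $w'$, which are $O(\mu)$) varies. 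So the segment $[w,w']$ has $x$-part nearly equal to $[w_0, o]$ and $z$-part going from $0$ up to nearly $1$; the tilted hyperplane, being tangent to a ball of radius $\mu$ about $(o,1)$, cuts such a near-vertical segment at height $1 - O(\mu)$, i.e.\ within $O(\mu)$ of the top — provided the segment is not nearly parallel to the hyperplane, which it is not because $(o,1) \notin$ the $x$-extended cylinder over the cut-off region in a quantitative sense controlled again by the fixed simplex $S$ and the fixed coefficient bounds $\|(A_i,a_i)\|$, $\min_j a_j > 0$. I would therefore organize the proof as: (1) fix $R'$ from the enclosing simplex; (2) observe all relevant edge directions of $\widetilde{P}$ lie in a finite set $\mathcal{D}$ determined by $d$-subsets of $A_J$, with a uniform lower bound on the $|\cos|$ between any $d \in \mathcal{D}$ lifted by a unit vertical component and the normal $(A_i,a_i)/\|(A_i,a_i)\|$; (3) conclude the cut parameter satisfies $1 - \lambda = O(\mu)$ with the implied constant built from $R'$, $\|(A_i,a_i)\|$, that $|\cos|$ bound, and $\min_j a_j$; (4) assemble $D := \max\{7,\ \text{that constant} + R' + 1\}$ (the $7$ only to meet the stated hypothesis $D \ge 7$, harmless to enlarge). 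The one genuine subtlety to be careful about is that $D$ must be independent of $\mu$ \emph{and} of which redundant inequality $i$ was deleted in the induction — but since there are only finitely many choices of $i$ and the relevant quantities are continuous in the (fixed) data, taking the max over those choices is legitimate.
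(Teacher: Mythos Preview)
Your overall plan --- write the new non-base vertex as a convex combination $\lambda\,w + (1-\lambda)\,w'$ of the cut-off base vertex and the nearby non-base vertex, then show $\lambda = O(\mu)$ --- is sound and is in spirit close to what the paper does. But the execution has two real problems.

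First, a factual slip: $(o,1)$ is the \emph{center} of the ball, not a point of the tangent hyperplane; the signed distance from $(o,1)$ to $H^{=}\big((A_i,a_i),b_i\big)$ equals $\mu$, not $0$. Hence $g\big((o,1)\big) = \mu\,\|(A_i,a_i)\|$, and your bound on $g(w')$ should read $0 < g(w') < 2\mu\,\|(A_i,a_i)\|$. This is easily repaired.

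The substantive gap is the lower bound on the ``violation'' $A_i w_0 - b_i$ at the cut-off base vertex $w_0$. You correctly flag this as the delicate point, but your proposed fix --- invoke ``edge directions of $\widetilde P$ lifted by a unit vertical component'' and a uniform $|\cos|$ bound against $(A_i,a_i)$ --- is off target. The edge of $\widetilde Q$ being cut is \emph{not} an edge of $\widetilde P$ lifted vertically; it runs from $(w_0,0)$ to a point near $(o,1)$, with direction approximately $(o-w_0,1)$, and this has nothing to do with the edges of $\widetilde P$ incident to $w_0$. What you actually need is much simpler: $w_0$ is a basic solution of $A_Jx\le b_J$, hence of $Ax\le b$, and by non-degeneracy it does not lie on $H^{=}(A_i,b_i)$. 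Therefore its distance to that hyperplane is at least $\delta_2$, the minimum nonzero Euclidean distance from any basic solution of $Ax\le b$ to any hyperplane of the arrangement $\mathcal H(A,b)$. That single constant gives $A_i w_0 - b_i \ge \delta_2\,\|A_i\|$, and from there your convex-combination estimate closes (after also bounding $\|(A_i,a_i)\|/\|A_i\|$ independently of $\mu$, which is routine). Note that $\delta_1,\delta_2$ are computed once for the full system $Ax\le b$ and serve at every level of the induction, so no max over choices of $i$ is needed.

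For comparison, the paper carries this out geometrically rather than algebraically: it introduces the tangent point $t$ and the point $r$ where the line through $t$ and the new vertex $y$ meets $\{z=0\}$, applies the law of sines in the triangles $\triangle\,t\,y\,w$ and $\triangle\,r\,y\,u$ (here $u$ is your $w_0$ and $w$ is your $w'$), and eliminates the common angle. The bound $|u\,r|\ge \delta_2$ is exactly the step corresponding to the violation lower bound above, and $\delta_1$ plays the role of your $R$ (your simplex-containment argument for $R$ is correct, incidentally, since $I\subseteq J$). The final constant is the explicit $D = 4(\delta_1+1.5)\big(1+\tfrac{\delta_1+1.5}{\delta_2}\big)+1$.
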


Hence by choosing any $0<\mu \le \min\big\{\frac{1}{2}, \frac{\epsilon}{D}\big\}$ (in particular, $\mu < \epsilon$), we guarantee that all non-base vertices of $Q$ (including the the ``new'' ones) are contained in $B^{\added{d+1}}_{\epsilon}\big((o,1)\big)$.
%\todo[inline, inlinewidth =6cm]{superfluous next abstract?}
%Note that the set of non-base vertices of $\widetilde{Q}$ is contained in the set of non-base vertices of $Q$. Both extensions have $(o,1)$ as the unique top vertex.

As $\widetilde{Q}$ is simple, every base vertex of $Q$ has exactly one edge not lying in the base, which will be called \replaced{the}{its} \emph{increasing} edge (since the $z$-coordinate of its non-base \replaced{endpoint}{endvertex} is greater than $0$, the $z$-coordinate of its base \replaced{endpoint}{endvertex}). Note that a $z$-increasing path connecting a base vertex $u$ to the top vertex necessarily contains the increasing edge incident to $u$.

Now suppose $v$ is a (base or non-base) vertex of $Q$, that is a vertex of $\widetilde{Q}$ as well, then $v \in H^{<}\big((A_i,a_i), b_i\big)$ holds,
\replaced{which}{where this is clear} for the non-base vertices \added{follows from their membership in $B^{d+1}_\mu\big((o,1)\big)$ and the choice of $a_i$}, and for the base vertices this is due to
$Ax\le b$ being non-degenerate. In particular, $v$ is still contained in exactly $d$ facets of $Q$. Hence $v$ has the same $z$-increasing path of length at most $m-d-1$ to the top vertex in $Q$ as in $\widetilde{Q}$, since $v$ itself and all non-base vertices of $\widetilde{Q}$ are contained in $H^{<}\big((A_i,a_i), b_i\big)$.

Finally consider a ``new'' base vertex $v$ of $Q$, which is the intersection of the hyperplane $H^{=}\big((A_i,a_i), b_i\big)$ with the relative interior of some base edge $e$ of $\widetilde{Q}$ 
(again due to the non-degeneracy of $Ax\le b$). Denote the endpoint of $e$ contained in $H^{>}\big((A_i,a_i), b_i\big)$ by $u$. Since $u$ is a base vertex of $\widetilde{Q}$, it has a unique increasing edge which we denote by $g$. Lets denote the other \replaced{endpoint}{endvertex} of $g$ by $w$. Then, since $w \in B_{\mu}\big((o,1)\big)$, the hyperplane $H^{=}\big((A_i,a_i), b_i\big)$ intersects $g$ in a relative interior point that we denote by $y$. As $\widetilde{Q}$ is simple, both $v$ and $y$ are contained in exactly $d$ facets of $Q$ and there exist\added{s} a $2$-face \replaced{$F$}{$f$} of $\widetilde{Q}$ containing both edges $e$ and $g$ incident to $u$. Since the hyperplane $H^{=}\big((A_i,a_i), b_i\big)$ intersects both edges $e$ and $g$ in points $v$ and $y$, respectively, it intersects \replaced{$F$}{$f$} in the edge $\{v,y\}$ of the rock extension $Q$. 
Since there exists a $z$-increasing path of length at most $m-d-1$ connecting $u$ and the top vertex $(o,1)$ in $\widetilde{Q}$, the same path with only the edge $\{w,u\}$ replaced by the two edges $\{w,y\}, \{y,v\}$ (which \deleted{both} are \added{both} $z$-increasing since $u$ is a base vertex and $y$ is contained in the relative interior of the increasing edge $\{w,u\}$) connects the base vertex $v$ to $(o,1)$ in $Q$ and has length at most $m-d$. Note that every ``new'' non-base vertex of $Q$ arises \replaced{like}{as} \added{the vertex $y$} \deleted{we} described \deleted{for $y$} above, thus admitting a $z$-increasing path to the top vertex $(o,1)$ of length at most $m-d$ (in fact at most $m-d-1$). Therefore, $Q$ is indeed a simple rock extension that is $\epsilon$-concentrated around $(o, 1)$ with each vertex of $Q$ admitting a $z$-increasing path to the top vertex of length at most $m-d$. See Figure \ref{fig:edge-deletion} for an illustration.
\end{proof}

\begin{figure}
\subfloat[]{\includegraphics[width=0.30\textwidth]{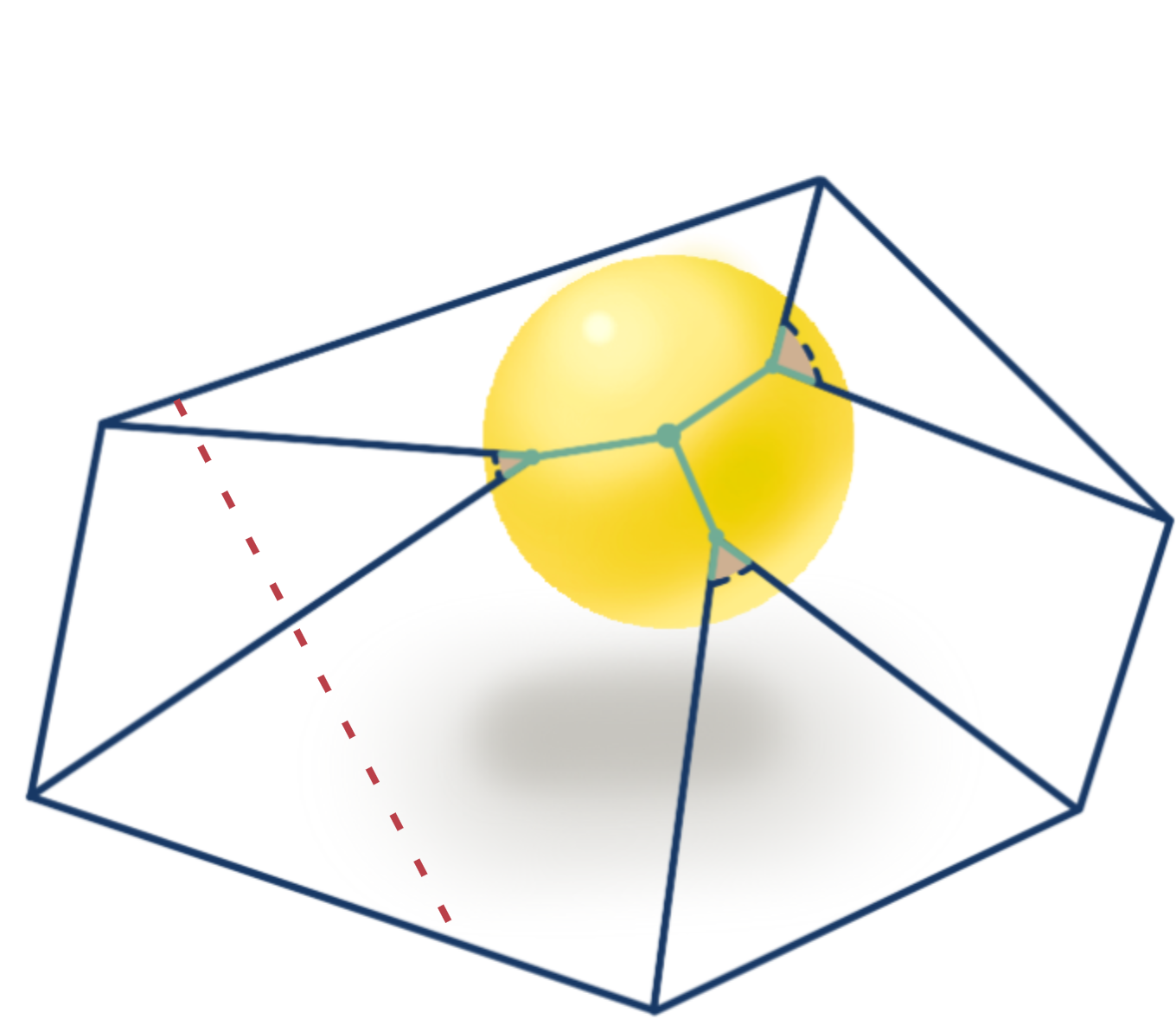}
    \label{fig:proof1}}\hfill
\subfloat[]{\includegraphics[width=0.30\textwidth]{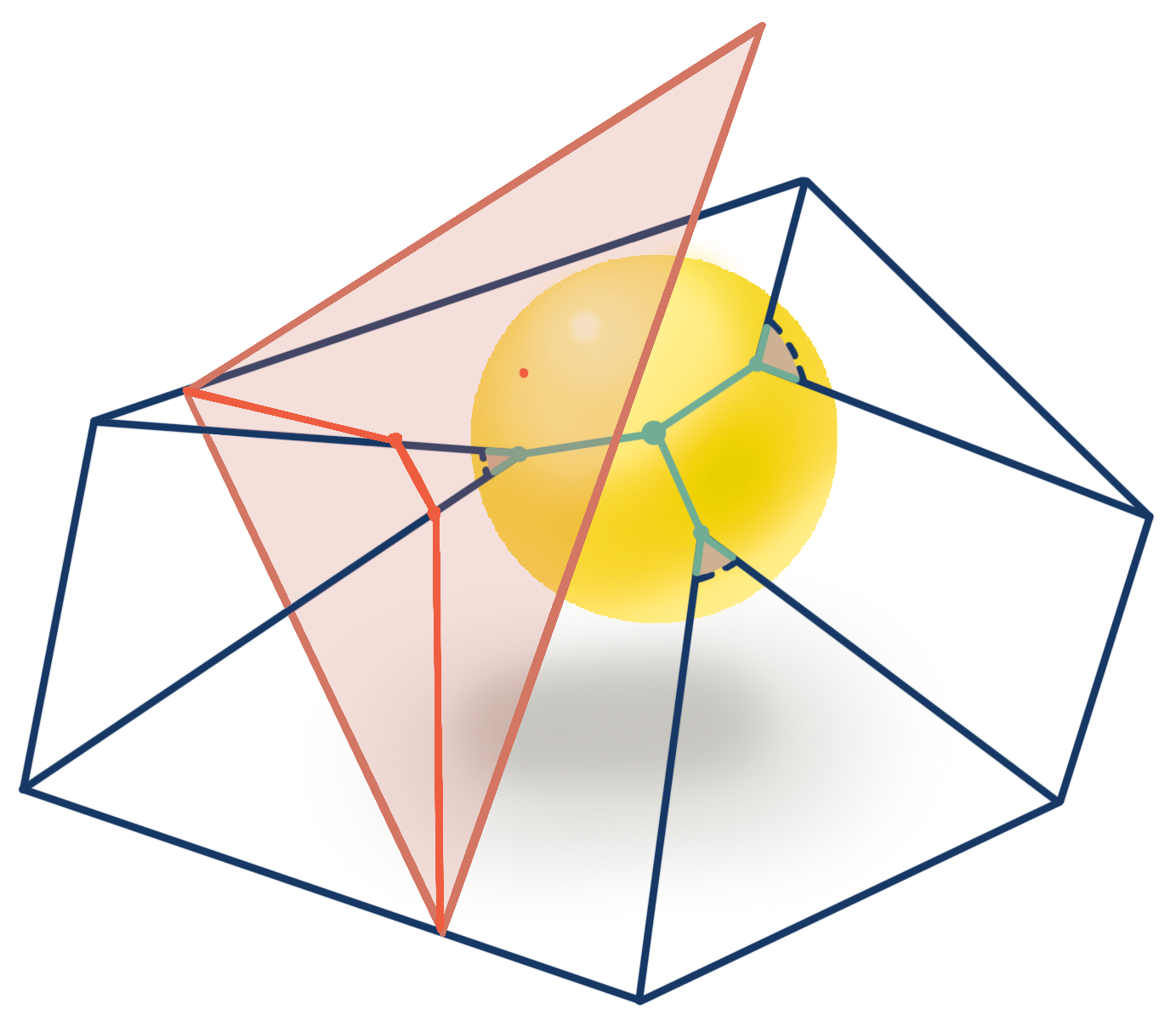}
    \label{fig:proof2}}\hfill
\subfloat[]{\includegraphics[width=0.25\textwidth]{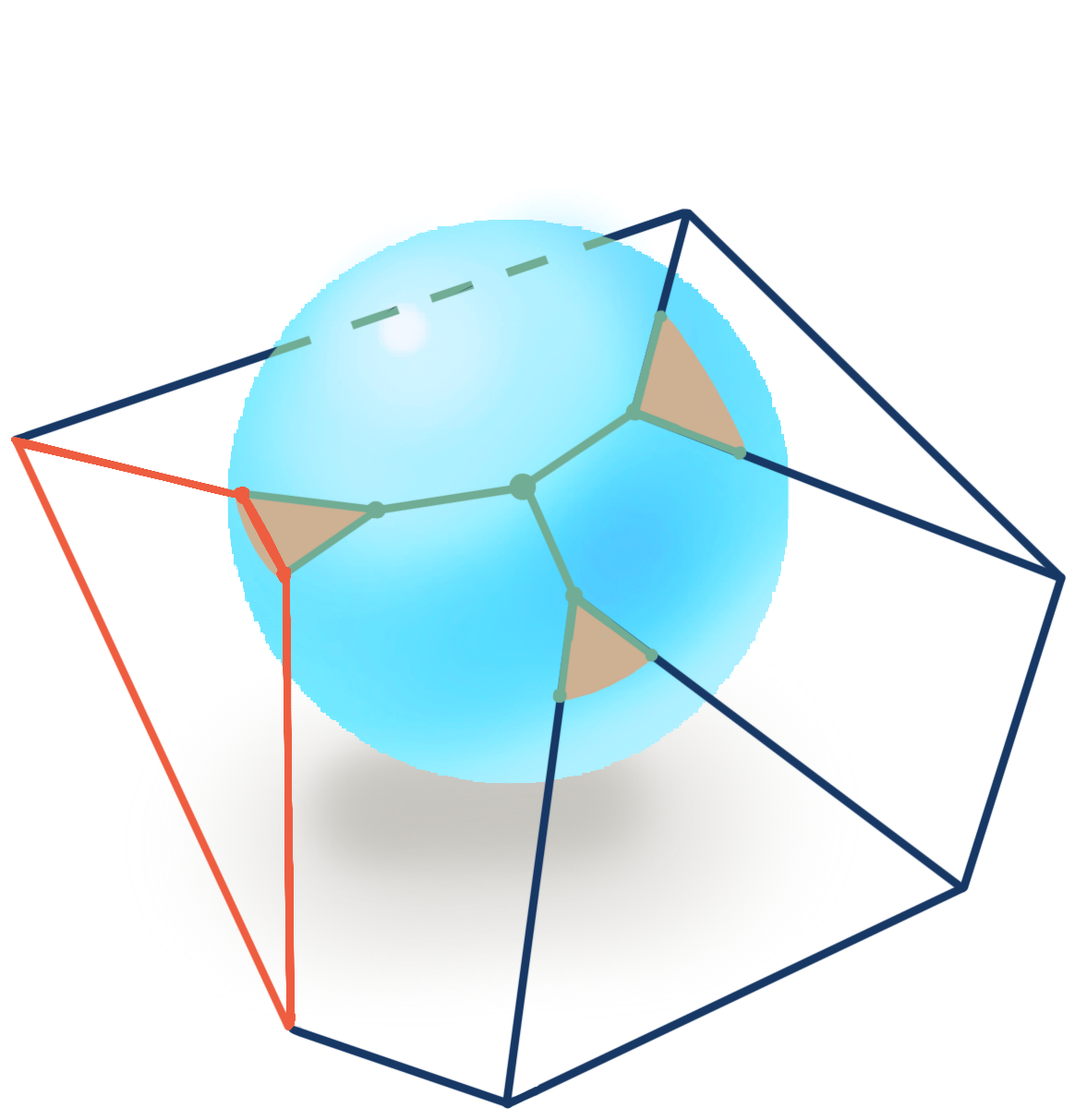}
    \label{fig:proof3}}
    \caption{Visualization of the proof of Theorem \ref{th:bottom-top-path} for 2-dimensional polytopes.}
    \label{fig:edge-deletion}
\end{figure}

%Lastly, to consolidate the power of such an extension we draw your attention to the Figure \ref{fig:400gon} illustrating a rock extension drastically reducing diameter of a $400$-gon.

% \begin{figure}[!h]
%     \centering
%     \includegraphics[width=0.5\textwidth]{400-gon.png}
%     \caption{Graph of a rock extension $Q$ of a 400-gon $P$. The paths of length 200 and 24 determining diameters of $P$ and $Q$ respectively are highlighted in \textcolor{red}{red} respectively \textcolor{green}{green}.}
%     \label{fig:400gon}
% \end{figure}

We still have to prove Claim \ref{cl:constant}. Let us  first recall some additional notions. 
\added{A \emph{basis} of a system $Ax \le b$ with an $m\times d$-matrix $A$ and $\rank(A)=d$ is  a subset $I \subseteq [m]$ with $|I|=d$ such that the submatrix $A_{I}$ of $A$ formed by the rows of $A$ indexed by $I$ is non-singular. Such a basis defines the \emph{basic solution} $A_{I}^{-1}b_I$ of the system; it might be \emph{feasible} (if it satisfies all inequalities $Ax \le b$) or not. The feasible basic solutions are exactly the vertices of the  polyhedron defined by $Ax \le b$ (which is called \emph{pointed} if it has vertices). A $d$-dimensional polytope in $\Real^d$ defined by an irredundant system $Ax \le b$  is simple  if and only if each vertex is defined by exactly one basis.}

\begin{definition}\label{def:deltas}
Let $\delta_1$ denote the maximum Euclidean distance from any (feasible or infeasible) basi\replaced{c}{s} solution of the system $Ax \le b$ to the point $o$. %with $B^{\added{d}}_{\epsilon}(o) \subseteq P$. 
And let $\delta_2$ be the minimum \added{nonzero} Euclidean distance from any (again feasible or infeasible) basi\replaced{c}{s} solution $u$ \added{of $Ax \le b$} to a\added{ny} hyperplane \deleted{$H^{=}(A_i,b_i)$} \added{induced by a row of $Ax \le b$.} \deleted{not containing $u$ with $A_ix \le b_i$ being a row of $Ax \le b$.}
\end{definition}

\begin{proof}[Proof of Claim \ref{cl:constant}]

\begin{figure}
    \centering
    \includegraphics[width=0.62\textwidth]{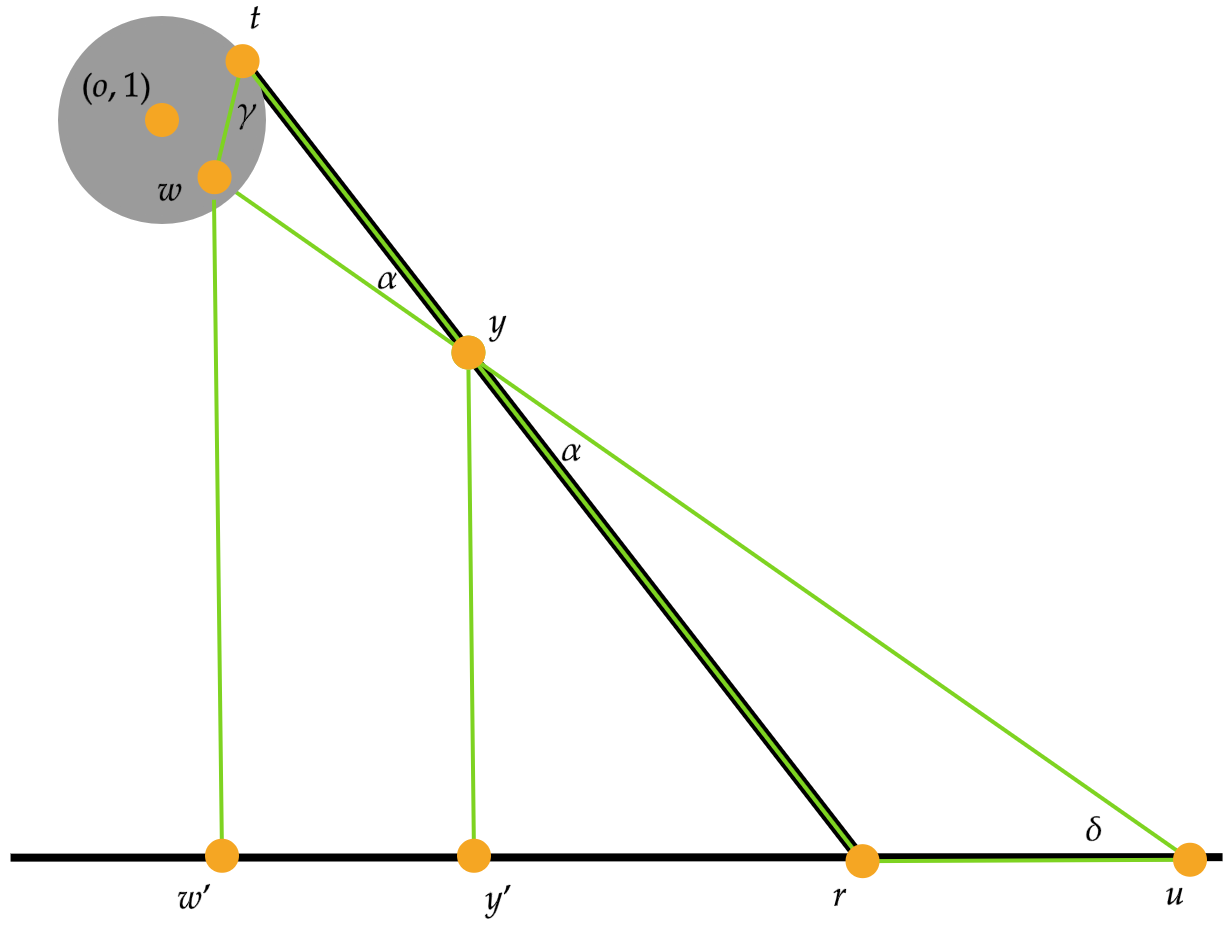}

     \caption{Objects of dimensionality $d+1$, $d$, $1$ and $0$ are depicted in \textcolor[RGB]{130,130,130}{\textbf{gray}}, \textbf{black}, \textcolor[RGB]{126,211,33}{\textbf{light green}} and \textcolor[RGB]{245,166,35}{\textbf{orange}} respectively. The gray ball has radius $\mu$. 
     %Note that \replaced{$w'$}{$W'$} and \replaced{$y'$}{$Y'$} need not to be contained in the line through $U$ and \replaced{$r$}{$R$}.
     The points \replaced{$w$}{$W$}, \replaced{$w'$}{$W'$}, $Y$, \replaced{$y'$}{$Y'$}, and \replaced{$u$}{$U$} are contained in a two-dimensi\added{o}nal plane, which, however, in general does not contain \replaced{$r$}{$R$} and \replaced{$t$}{$T$}.}
     \label{fig:plane_geom}
\end{figure}

 Let \replaced{$u$}{$U$} be a base vertex of $\widetilde{Q}$ \replaced{cut off}{cut-off} by $H^{\le}\big((A_i,a_i), b_i)$. We denote the other vertex of the increasing edge of \replaced{$u$}{$U$} by \replaced{$w$}{$W$}. Note that the following argumentation only relies on $\added{w} \in B_{\mu}\big((o,1)\big)$ and the fact that \replaced{$w$}{$W$} doesn't lie above $\{z = 1\}$, which will be useful for \deleted{the} considerations in Section \ref{sec:rational-encoding}. Let \deleted{further }\replaced{$y$}{$Y$} be the intersection point of $H^{=}\big((A_i,a_i), b_i)$ with the edge \replaced{$\{u,v\}$}{$UW$}. We aim to bound the distance from \replaced{$y$}{$Y$} to $(o,1)$. Note that \replaced{$y$}{$Y$} lies below $\{z=1\}$ because \deleted{of }$\added{w} \in \{z \le 1\}$. Furthermore, due to the choice of $a_i$, the hyperplane $H^{=}\big((A_i,a_i), b_i)$ is tangential to $B^{\added{d+1}}_{\mu}\big((o,1)\big)$ at a point we denote by \replaced{$t$}{$T$}. Note that \replaced{$t$}{$T$} lies above $\{z = 1\}$ since we have 
 $B^{\added{d}}_\mu(o) \subsetneq B^{\added{d}}_\epsilon(o) \subseteq P$. Thus the line through \replaced{$t$}{$T$} and \replaced{$y$}{$Y$} intersects $\{z =0 \}$ in a point \replaced{$r$}{$R$}. Since both \replaced{$t$}{$T$} and \replaced{$y$}{$Y$} are contained in $H^{=}\big((A_i,a_i), b_i)$, so is that line. We denote the angles 
 $\angle \added{ryu} = \angle \added{tyw}\,, \angle \added{wty} $ and $\angle \added{yur}$ by $\alpha$, $\gamma$ and $\delta$ respectively. See Figure \ref{fig:plane_geom} for an illustration. 

\added{For the sake of readability, we further use \replaced{$|ab|$}{$[a,b]$} for the length of the line segment $[a,b]$ (the Euclidean distance) between any two points $a$ and $b$.} \deleted{On the one hand a}\added{A}pplying the law of sines \replaced{to}{for} $\triangle \added{ryu}$ we obtain
$\frac{\sin \alpha}{\added{|ur|}} = \frac{\sin \delta}{\added{|yr|}}$. 
On the other hand, for $\triangle \added{tyw}$ the same implies $\frac{\sin \alpha}{\added{|tw|}} = \frac{\sin \gamma}{\added{|wy|}}$. Solving both equations for $\sin \alpha$ we get $\frac{\added{|ur|}}{\added{|yr|}}\sin\delta = \frac{\added{|tw|}}{\added{|wy|}}\sin \gamma$. Then, solving the last equality for $\added{|wy|}$ we obtain
\begin{equation}\label{eq:dist_first}
    \added{|wy|} = \frac{\added{|tw|} \cdot \added{\added{|yr|}}}{\added{|ur|}} \frac{\sin \gamma}{\sin \delta} \le \frac{2\mu(\added{|yu|}+\added{|ur|})\added{|yu|}}{\added{|ur|} \cdot h_{\added{y},\added{[u,r]}}}\,,
\end{equation}
where the last inequality holds since $\added{|tw|} \le \dist\big(\added{t}, (o,1)\big) + \dist\big(\added{w}, (o,1)\big)  \le 2\mu$, $\sin \gamma \le 1$, $\added{\added{|yr|}}\le \added{|yu|} + \added{|ur|}$  and $\sin \delta = \frac{h_{\added{y},\added{[u,r]}}}{\added{|yu|}}$, where $h_{\added{y},\added{[u,r]}}$ is the height of vertex \replaced{$y$}{$Y$} in $\triangle \added{ryu}$. 

%Further we draw two lines orthogonal to $\{z=0\}$ through \replaced{$y$}{$Y$} and \replaced{$w$}{$W$} and denote their intersections with $\{z=0\}$ by \replaced{$y'$}{$Y'$} and \replaced{$w'$}{$W'$} respectively. 
We denote the orthogonal projections of \replaced{$y$}{$Y$} and \replaced{$w$}{$W$} to the \replaced{hyperplane}{hyplerpane} $\{z=0\}$ by \replaced{$y'$}{$Y'$} and \replaced{$w'$}{$W'$}, respectively. 
Since $\added{|yy'|}$ is the distance between \replaced{$y$}{$Y$} and the hyperplane $\{z=0\}$ \deleted{that} contain\replaced{ing}{s} both \replaced{$u$}{$U$} and \replaced{$r$}{$R$}, we conclude  $h_{\added{y}, \added{[u,r]}} \ge \added{|yy'|}$. Moreover, the triangles $\triangle \added{yuy'}$ and $\triangle \added{wuw'}$ are similar and therefore $h_{\added{y}, \added{[u,r]}} \ge \added{|yy'|}=\frac{\added{|yu|}}{\added{|yu|}+\added{|wy|}}\added{|ww'|} \ge \frac{\added{|yu|}}{\added{|yu|}+\added{|wy|}} (1 -\mu)$, where the last inequality follows from the fact, that $W \in B_{\mu}\big((o,1)\big)$. Plugging that estimate into   \eqref{eq:dist_first} gives

\begin{equation}\label{eq:dist_second}
    \added{|wy|}  \le \frac{2\mu(\added{|yu|}+\added{|ur|})\added{|yu|}(\added{|yu|} + \added{|wy|})}{\added{|ur|} (1-\mu) \added{|yu|}} = \frac{2\mu(\added{|yu|} + \added{|wy|})}{1-\mu}\Big(1+\frac{\added{|yu|}}{\added{|ur|}}\Big)\,.
\end{equation}
Finally we bound the length of all the remaining line segments appearing in the right-hand side of \eqref{eq:dist_second} to obtain an upper bound on $\added{|wy|}$. First, we observe $\added{|yu|} \le \added{|yu|}+\added{|wy|}\le \dist \big(\added{u}, (o,1)\big) + \mu \le \sqrt{\delta_1^2 + 1}+\mu$. Secondly $\added{|ur|}\ge \dist(\added{u}, H^{=}\big(A_i, b_i)\big) \ge \delta_2$. Plugging those inequalities into \eqref{eq:dist_second} we obtain

\begin{equation}\label{eq:dist_third}
\begin{split}
\added{|wy|}  & \le \frac{2\mu(\sqrt{\delta_1^2 + 1}+\mu)}{1-\mu}\Big(1+\frac{\sqrt{\delta_1^2 + 1}+\mu}{\delta_2}\Big)\\
& \le  4\mu(\delta_1 +1.5)\Big(1+\frac{\delta_1 +1.5}{\delta_2}\Big)\,,
\end{split}
\end{equation}    
where for the last inequality we used $\mu \le 0.5$ and $\sqrt{\delta_1^2 + 1} \le \delta_1 + 1$. It follows that $\dist\big((o,1), \added{y}\big) < \mu + \added{|wy|} \le \mu D$, with $D := 4\big(\delta_1 +1.5\big)\Big(1+\frac{\delta_1 +1.5}{\delta_2}\Big) + 1 \ge 7$.
\end{proof}

\section{Low dimensional polytopes}\label{sec:2-3-dim}

This section \replaced{improves the diameter bound}{is dedicated to an improvement of the diameter bound} from the last section for rock extensions of two- and three-dimensional polytopes.% resulting from exploiting some known structural properties.

Let us consider again the setting of the proof of Theorem \ref{th:bottom-top-path}. The main source of improvement for $d\in\{2,3\}$ \replaced{originates from applying}{will be to apply} the induction hypothesis to a polytope obtained by deleting a batch of inequalities defining \emph{pairwise disjoint facets} of the original polytope. It \deleted{will} turn\added{s} out that subsequently constructing a rock extension by adding all of the batch inequalities back one after another (with coefficients $a$ as in the proof of Theorem \ref{th:bottom-top-path})  \replaced{has}{will have} the effect of increasing the combinatorial distances to the top vertex by at most one overall. Next we elaborate on the latter fact. 

Let $Ax \le b$ be a simplex-containing non-degenerate system of $m \ge d+3$ inequalities defining a polytope $P = P^{\le}(A,b)$ with an interior point $o$, and let $\epsilon$ be a positive number such that $B^{\added{d}}_\epsilon(o) \subseteq P$. Furthermore, let the inequalities $A_ix \le b_i$ and $A_jx \le b_j$ with $i,j \in [m]\setminus I$ (where, again, $I$ is as in Definition \ref{def:simpl-cont}) and $i\ne j$  define \emph{disjoint} facets \replaced{$F_i$}{$f_i$} and \replaced{$F_j$}{$f_j$} of $P$, respectively. Note that each vertex of \replaced{$F_j$}{$f_j$} is contained in $H^{<}(A_i,b_i)$ and vice versa. 
Consider the polytopes $P_J:=P^{\le}(A_J,b_J)$ with $J := [m] \setminus \{i\}$ and $P_K = P^{\le}(A_K,b_K)$ with $K:=[m]\setminus\{i,j\}$.
%Hence the inequality $A_jx \le b_j$ defines a facet of the polytope $P_J:=P^{\le}(A_J,b_J)$ with $J := [m] \setminus \{i\}$, which is equal to \replaced{$F_j$}{$f_j$}. Additionally consider a polytope  $P_K = P^{\le}(A_K,b_K)$ with $K:=[m]\setminus\{i,j\}$. 
\added{Theorem \ref{th:bottom-top-path} establishes that} \replaced{f}{F}or \replaced{a positive}{the} number $\nu := \min\{\frac{1}{2D}, \frac{\epsilon}{D^2} \} < \epsilon$ with $D$ as in Claim \ref{cl:constant}\added{,} \deleted{by Theorem \ref{th:bottom-top-path}} the polytope $P_K$ admits a simple rock extension $Q_K$ that is $\nu$-concentrated around $(o,1)$ such that for every vertex of $Q_K$ there exists a $z$-increasing path of length at most $m-d-2$ to the top vertex $(o,1)$. 
Now we argue that adding the inequality $A_jx +a_jz\le b_j$ to a system describing $Q_K$ (with $a_j$ chosen as discussed in the proof of Theorem \ref{th:bottom-top-path}, where we use $\mu := \min\{\frac{1}{2}, \frac{\epsilon}{D}\}$ for $\epsilon$ in \replaced{the said}{that} theorem), and then further adding $A_ix +a_iz\le b_i$ (with $a_i$ as in the proof of Theorem \ref{th:bottom-top-path} again) % Now we aim to show that adding inequality $A_jx +a_jz\le b_j$ to a system describing $Q_K$, thus obtaining a rock-extension $Q_J$ of $P_J$ that is $\mu$-concentrated around $o$, with $\mu := \min\{\frac{1}{2}, \frac{\epsilon}{D}\}$, and then adding $A_ix +a_iz\le b_i$ to it (with both $a_j$ and $a_i$ chosen as in the proof of Theorem \ref{th:bottom-top-path}) 
\replaced{yields}{results in} a simple rock extension $Q$ of $P$ that is $\epsilon$-concentrated around $(o,1)$ and has diameter at most $2(m-d-1)$. \replaced{In other words}{More precisely}, despite subsequently adding two cutting halfspaces, the length of all paths to the top has increased by at most one.

%Now consider a simple rock extension $Q_J$ of $P_J$ that is $\mu$-concentrated around $o$ (with $\mu := \min\{\frac{1}{2}, \frac{\epsilon}{D}\}$) constructed from $Q_K$ by adding the inequality $A_jx + a_jz \le b_j$ as in the proof of Theorem \ref{th:bottom-top-path}. 
Let $v$ be a ``new'' base vertex of $Q_J$, which is the intersection of $H^{=}\big((A_j,a_j), b_j\big)$ with the relative interior of some base edge $e$ of $Q_K$, admitting a $z$-increasing path to the top vertex of $Q_J$ of length at most $m-d-1$ as in the proof of Theorem \ref{th:bottom-top-path}. Since $v$ is identified with a vertex of facet \replaced{$F_j$}{$f_j$} of $P$ and since \replaced{$F_i$}{$f_i$} and \replaced{$F_j$}{$f_j$} are disjoint, $v \in H^{<}\big((A_i,a_i),b_i\big)$ holds and hence $v$ is a vertex of $Q$ as well. Moreover, recall that all non-base vertices of $Q_J$ are vertices of $Q$ since they are contained in $B^{\added{d+1}}_\mu\big((o,1)\big) \subseteq H^{<}\big((A_i,a_i),b_i\big)$ and hence they admit increasing path of length at most $m-d-2$ to the top of $Q$. Therefore, $v$ admits the very same $z$-increasing path of length at most $m-d-1$ to the top vertex of $Q$ as in $Q_J$. On the other hand any ``old'' base vertex $u$ of $Q_J$ (which is a base vertex of $Q_K$ too), admits a path to the top vertex of $Q_J$ of length at most $m-d-2$. Since the vertices of the latter kind are the only ones that could be cut off by $A_ix+a_i\added{z} \le b_i$ when constructing $Q$, all the ``new'' base and non-base vertices of $Q$ admit increasing path of length \added{at most} $m-d-1$ resp. $m-d-2$ to the top vertex of $Q$.

Note that the above argumentation naturally extends to any number of inequalities, defining pairwise disjoint facets of $P$ where the sequence $\mu = \min\{\frac{1}{2}, \frac{\epsilon}{D}\}$, $\nu = \frac{\mu}{D}$ is extended to $\mu, \frac{\mu}{D}, \frac{\mu}{D^2}, \frac{\mu}{D^3},\dots$

We now exploit the latter consideration to improve the diameter bounds for rock extensions of two- and three-dimensional polytopes. Let us also note upfront that any non-degenerate system of $m$ inequalities $Ax \le b$ defining a $d$-polytope $P$ can be augmented to a non-degenerate simplex-containing system describing $P$ by adding a single redundant inequality to $Ax \le b$. Let $v$ be a vertex of $P$. Then the redundant inequality $\alpha x \le \beta$ can be chosen in such \added{a} way that together with $d$ inequalities defining $v$ it \replaced{defines}{forms} a simplex containing $P$ and \replaced{so}{such} that the system $Ax \le b\,, \alpha x \le \beta$ is non-degenerate. We will elaborate on how to choose  $\alpha$ and $\beta$ in Section \ref{sec:alg} in more detail. 

The following statement \added{holds} for polygons \deleted{holds}.

\begin{theorem}\label{th:rock-2d}
Each $n$-gon admits a simple $3$-dimensional extension with at most $n+2$ facets and diameter at most $2\log_2 (n-2)+4$.
\end{theorem}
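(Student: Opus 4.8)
The plan is to iterate the single-inequality reduction from the proof of Theorem~\ref{th:bottom-top-path} in the \emph{batched} form discussed right before the statement, deleting roughly half of the facets at a time. Since a polygon has only linearly many facets, $\bigO(\log n)$ batches will suffice, and the batched argument shows that each batch costs only $+1$ on the combinatorial distances to the top vertex.

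\textbf{Setup.} Given an $n$-gon $P$, fix a vertex $v$ and, as explained above, add a single redundant inequality so that the two edges of $P$ incident to $v$ together with the new inequality bound a triangle $T\supseteq P$ and the resulting system $Ax\le b$ of $n+1$ inequalities is non-degenerate; let $I$ be the index set of these three inequalities. Throughout the construction I only ever delete inequalities from $[n+1]\setminus I$ and always keep $I$. This guarantees that every polygon $P'$ arising along the way satisfies $P\subseteq P'\subseteq T$, so $P'$ is bounded and full-dimensional, the corresponding subsystem is again non-degenerate and simplex-containing, and every edge $e_i$ of $P$ still present stays a facet of $P'$: indeed $e_i=P\cap H^{=}(A_i,b_i)\subseteq P'\cap H^{=}(A_i,b_i)$, a face of $P'$ contained in a line but containing the segment $e_i$, hence $1$-dimensional. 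Moreover the $n-2$ inequalities of $[n+1]\setminus I$ are exactly the edges of $P$ not incident to $v$, and in the boundary cycle of $P$ they form a path in the ``shares-a-vertex'' graph (two facets of a polygon being disjoint precisely when non-adjacent there); this path structure survives repeated deletion of a maximum independent set.

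\textbf{Induction.} I would prove, by induction on the number $m$ of inequalities, the following strengthening of the statement: every polygon given by a non-degenerate, simplex-containing system of $m$ inequalities of the above shape admits, for every $\epsilon>0$ and every $o$ with $B^{2}_{\epsilon}(o)$ contained in the polygon, a \emph{simple} rock extension that is $\epsilon$-concentrated around $(o,1)$, has at most $m+1$ facets, and in which every vertex reaches the top vertex along a $z$-increasing path of length at most $\lceil\log_2(m-2)\rceil+1$. For $m=3$ the polygon is a triangle and the pyramid over it (the base case of Theorem~\ref{th:bottom-top-path}) does the job. For $m\ge 4$, among the $m-3$ path-edges outside $I$ pick a maximum set $S$ of pairwise non-adjacent ones, so $|S|=\lceil(m-3)/2\rceil$ and the edges of $S$ are pairwise disjoint facets of the current polygon; delete them. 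The remaining system has $\lfloor(m-3)/2\rfloor+3$ inequalities, still of the required shape, so by the induction hypothesis it has a simple, $\mu$-concentrated rock extension (for any sufficiently small $\mu$) in which every vertex reaches the top in at most $\lceil\log_2(\lfloor(m-3)/2\rfloor+1)\rceil+1$ steps. Re-adding the inequalities of $S$ in one batch, with the tilt coefficients and a geometrically decreasing sequence of concentration radii $\mu,\tfrac{\mu}{D},\tfrac{\mu}{D^2},\dots$ as in the proof of Theorem~\ref{th:bottom-top-path} and the remark following the batched argument, produces a simple, $\epsilon$-concentrated rock extension of the original polygon in which the maximal $z$-increasing distance to the top has grown by at most one. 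An elementary computation gives $\lceil\log_2(\lfloor(m-3)/2\rfloor+1)\rceil+2\le\lceil\log_2(m-2)\rceil+1$, which closes the induction, and the facet count is at most $(\lfloor(m-3)/2\rfloor+3)+1\le m+1$.

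\textbf{Conclusion and main obstacle.} Applying the strengthened statement with $m=n+1$ to $P$ gives a simple rock extension $Q$ of $P$ with at most $n+2$ facets in which every vertex has a $z$-increasing path of length at most $\lceil\log_2(n-1)\rceil+1$ to the top vertex; hence any two vertices are joined through the top by a path of length at most $2\lceil\log_2(n-1)\rceil+2\le 2\log_2(n-2)+4$, and $Q$, being a rock extension, is a $3$-dimensional extension of the $2$-dimensional polytope $P$. The routine parts --- non-degeneracy and the simplex-containing property of the shrinking systems, boundedness, and the stepwise control of the concentration radii --- are inherited from the already established results. The real work is the combinatorial bookkeeping: verifying that the batch one re-adds really consists of \emph{pairwise disjoint facets of the current polygon} (which forces deletion of a genuine independent set of the facet path rather than an arbitrary half), tracking how the path structure of the non-$I$ facets persists under repeated halving, and checking the slightly delicate logarithmic estimate $\lceil\log_2(\lfloor(m-3)/2\rfloor+1)\rceil+2\le\lceil\log_2(m-2)\rceil+1$ that makes the bound land exactly at $2\log_2(n-2)+4$.
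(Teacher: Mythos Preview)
Your proposal is correct and follows essentially the same approach as the paper: augment the $n$-gon by one redundant inequality to obtain a simplex-containing non-degenerate system with a three-element $I$, then run the batched deletion/re-addition argument on the path of non-$I$ edges, halving at each level. The paper phrases the selection of disjoint facets as ``pick every second edge of $P^{\le}(A_{[m]\setminus I},b_{[m]\setminus I})$'', uses base cases $m=4,5$, and carries the inductive bound $\log_2(m-3)+2$ rather than your $\lceil\log_2(m-2)\rceil+1$, but these are cosmetic differences leading to the same final bound $2\log_2(n-2)+4$.
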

\begin{proof}
We \replaced{start}{commence} with the observation, that any irredundant system of inequalities describing an $n$-gon $P$ is non-degenerate, since no three distinct edge-containing lines intersect in a point. Hence, as discussed above, $P$ can be described by a non-degenerate system $Ax \le b$ of $m=n+1$ inequalities, consisting of $n$ edge-defining inequalities for $P$ and an artificially added inequality. Two inequalities defining edges incident to a vertex of $P$ and the auxiliary inequality, such that the three of them form a simplex containing $P$ are indexed by $I \subseteq [m]$. %Note that set $I$ defining a simplex containing $P$ consists of indices of two inequalities defining edges incident to a vertex of $P$ and the index of the redundant one. 
As in Theorem \ref{th:bottom-top-path}\added{,} we prove by induction that for any interior point $o$ of $P$ and every $\epsilon >0$ with $B^{\added{d}}_\epsilon(o) \subseteq P$ there exists a simple rock extension $Q$ of $P$ that is $\epsilon$-concentrated around $(o,1)$ such that for each vertex of $Q$ there exists a $z$-increasing path of length at most $\log_2 (m-3)+2$ to the top vertex. Clearly $Q$ then has  diameter at most $2\log_2(n-2)+4$. 

It is easy to see that the claim holds for $m=4,5$.
    Note that $\lceil \frac{n-2}{2} \rceil = \lceil \frac{m-3}{2} \rceil$ of the facets defined by inequalities from $[m] \setminus I$ are pairwise disjoint. For that we just pick every second edge while traversing the graph of the (not necessarily bounded) polygon $P^{\le}(A_{[m] \setminus I},b_{[m] \setminus I})$ since the corresponding edges are pairwise disjoint in $P$ as well. Deleting the inequalities corresponding to all those facets at once yields a polygon $\widetilde{P}$ described by a system of $\widetilde{m}:=\lfloor \frac{m+3}{2} \rfloor \le \frac{m+3}{2}$ inequalities. By the induction hypothesis for $\mu := D^{-\lceil \frac{n-2}{2} \rceil}\min\{\frac{D}{2},\epsilon\}$ with $D$ as in Claim \ref{cl:constant} there exists a simple rock extension $\widetilde{Q}$ of $\widetilde{P}$ that is $\mu$-concentrated around $(o,1)$ so that for each vertex of $\widetilde{Q}$ there exists a $z$-increasing path of length at most $\log_2 (\widetilde{m}-3)+2$ to the top vertex. 
\deleted{According to the arguments discussed above, 
s}\added{S}ubsequently adding all $\lceil \frac{n-2}{2} \rceil$ deleted inequalities back \added{one by one} with appropriate $a$-coefficients \replaced{yields}{, thus constructing} a sequence of $\lceil \frac{n-2}{2} \rceil$ rock extensions $\lambda_k$-concentrated around $(o,1)$ with $\lambda_0=\mu, \lambda_{k+1}=D\lambda_{k}\added{\,,k=1,..,\lceil \frac{n-2}{2} \rceil-1}$\added{. Note that} $\added{\lambda_0<\lambda_1<\dots<}\lambda_{\lceil \frac{n-2}{2} \rceil} \le \epsilon$\added{.} \added{According to the arguments discussed above, the last rock extension $Q$ of the latter sequence} is a simple \deleted{rock} extension \deleted{$Q$} of $P$ that is $\epsilon$-concentrated around $(o,1)$ such that each vertex of $Q$ admits a $z$-increasing path to the top vertex of length at most $\log_2 (\widetilde{m}-3)+2 +1 \le \log_2(\frac{m+3}{2} -3)+2 +1= \log_2(m-3)+2 = \log_2(n-2)+2$.
%since the deleted facets of $P$ are pairwise disjoint.
%Note that the latter bound holds for $m=4,5$ for the induction basis.
\end{proof}

A three-dimensional simple rock extension of a polygon having logarithmic diameter is depicted in Figure \ref{fig:rock-24-gon}. Similarly we prove the following bound for three-dimensional polytopes (recall that each strongly non-degenerate polytope with $n$ facets can be described by a non-degenerate simplex-containing system of at most $m=n+1$ inequalities). 
\begin{theorem}\label{th:rock-3d}
Each three-dimensional polytope $P$ described by a non-degenerate simplex-containing system with $m$ inequalities admits a simple four-dimensional extension with at most $m+1$ facets and diameter at most $2\log_{\frac{4}{3}} (m-4)+4$ 
\end{theorem}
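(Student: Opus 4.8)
The plan is to mimic the proof of Theorem~\ref{th:rock-2d}, replacing the ``pick every second edge'' argument for polygons by a corresponding statement about $3$-polytopes: namely, that a large independent set in the ``facet-adjacency graph'' of a simple $3$-polytope lets us delete many inequalities at once, each batch deletion shrinking the inequality count by a constant fraction while only costing one extra unit of combinatorial distance to the top (by the batch-deletion argument developed in this section). This gives a recursion on the number $m$ of inequalities whose solution is logarithmic, but with a worse base than for polygons because the independent set we can guarantee is only a constant fraction $c<1/2$ of the facets, rather than (essentially) half.

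The first step is the combinatorial input: the facet-adjacency graph of a simple $3$-polytope is planar (it is the graph dual to the boundary $2$-sphere, hence a planar triangulation-free... actually it is $3$-connected planar by Steinitz), and by the Four Color Theorem — or more elementarily, since every planar graph is $5$-degenerate and hence $6$-colorable, or even just by a counting/greedy argument — its independence number is at least a constant fraction of the number of vertices. The cleanest route: a planar graph on $N$ vertices has an independent set of size at least $N/4$ (Four Color Theorem) or at least $N/6$ via degeneracy; picking the constant so that the recursion yields base $\tfrac{4}{3}$ suggests the intended bound uses an independent set of size $\ge \tfrac{N}{4}$, so that deleting it leaves $\widetilde m \le \tfrac34 m + O(1)$ inequalities, giving a $\log_{4/3}$ recursion. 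I would state and cite this as the facts that (i)~the facet-adjacency graph of a simple $3$-polytope is planar (Steinitz), and (ii)~planar graphs have independence number at least a quarter of their vertex set (Four Color Theorem), and I would be careful about the inequalities defining $I$ from Definition~\ref{def:simpl-cont} (the $d+1=4$ ``simplex'' inequalities), which we exclude from the batch just as the $n$-gon proof excludes the auxiliary inequality.

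With the combinatorial lemma in hand, the induction runs exactly as in Theorem~\ref{th:rock-2d}. Set up the stronger inductive claim: for every interior point $o$ of $P$ and every $\epsilon>0$ with $B^3_\epsilon(o)\subseteq P$ there is a simple rock extension $Q$ that is $\epsilon$-concentrated around $(o,1)$ such that every vertex has a $z$-increasing path of length at most $\log_{4/3}(m-4)+2$ to the top vertex $(o,1)$; then $Q$ has diameter at most $2\log_{4/3}(m-4)+4$. For the base cases (small $m$, where $P$ is a simplex or close to it) invoke Theorem~\ref{th:bottom-top-path} directly. For the inductive step, delete a batch of $\ge \tfrac14(m-4)$ inequalities indexing pairwise facet-disjoint facets among those not in $I$, apply the induction hypothesis to the resulting $3$-polytope $\widetilde P$ on $\widetilde m \le \tfrac34 m + O(1)$ inequalities with $\mu := D^{-(\text{batch size})}\min\{\tfrac D2,\epsilon\}$ (where $D$ is the constant from Claim~\ref{cl:constant}), then add the deleted inequalities back one at a time with the tilt coefficients $a_i$ as in the proof of Theorem~\ref{th:bottom-top-path}, using the $\lambda_k = D^k\mu$ escalation of concentration radii. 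The batch-deletion analysis already carried out in this section shows the whole re-insertion increases all distances to the top by at most one, so the new bound is $\log_{4/3}(\widetilde m - 4) + 2 + 1 \le \log_{4/3}(\tfrac34(m-4)) + 3 = \log_{4/3}(m-4) + 2$, closing the induction.

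The main obstacle is making the ``pairwise disjoint facets'' notion line up correctly with an independent set in the facet-adjacency graph and verifying that deleting such a batch from the irredundant system still leaves a (simplex-containing, non-degenerate) system defining a bounded $3$-polytope on the claimed number of inequalities — in particular that disjointness of facets $F_i,F_j$ in $P$ is exactly non-adjacency in the facet graph, that the batch-deletion argument of this section applies verbatim to an arbitrary-size antichain of facets (which the text asserts it does), and that the arithmetic $\widetilde m \le \tfrac34 m + O(1)$ is honest once one accounts for the excluded set $I$ and for facets of $\widetilde P$ that merge or disappear. A secondary subtlety is confirming that the facet graph is genuinely planar as a \emph{simple} graph (no multi-edges), which holds because $P$ is a simple $3$-polytope so distinct facets share at most one edge; this is what makes the Four-Color bound applicable and pins down the base $\tfrac43$.
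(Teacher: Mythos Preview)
Your proposal is correct and follows essentially the same route as the paper: identify the facet-adjacency graph of the simple $3$-polytope with the (planar) graph of the polar $P^\circ$, remove the four simplex-indices $I$, take a stable set of size at least $\tfrac{m-4}{4}$ via the Four Color Theorem, delete that batch, apply the induction hypothesis with suitably shrunk $\mu$, and re-insert one inequality at a time using the batch-deletion analysis so that the whole batch costs only one extra edge on each $z$-increasing path, yielding the recursion $\log_{4/3}(\widetilde m-4)+3\le\log_{4/3}(m-4)+2$. The paper's proof differs only in presentation (it does not re-verify the $\epsilon$-concentration bookkeeping or the base cases explicitly, pointing back to Theorem~\ref{th:rock-2d} for that), and your worries about ``disjoint facets $=$ non-adjacent in the facet graph'' and about boundedness of $\widetilde P$ are handled exactly as you suggest: simplicity of $P$ makes two facets non-disjoint iff they share an edge, and the simplex-defining subset $I$ is untouched so $\widetilde P$ stays bounded and simplex-containing.
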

\begin{proof}
Once more, the set of indices of four inequalities defining the simplex containing $P$ is refereed to as $I$. To estimate a number of pairwise disjoint facets of $P$, consider the graph $G_F$ whose vertices are the facets of $P$ where two vertices are adjacent if and only if the corresponding facets are non-disjoint. Since $P$ is simple, two facets are non-disjoint if and only if they share an edge. Therefore $G_F$ is the graph of the polar polytope $P^{\circ}$. Since $P^{\circ}$ is three-dimensional, $G(P^{\circ})$ is planar, and so is the graph $G_F^{\prime}:= G(P^{\circ}) \setminus V(I)$, where $V(I)$ contains vertices of $G(P^{\circ})$ corresponding to the facets of $P$ defined by the inequalities indexed by $I$. It is a consequence of the four-color theorem \cite{appelhaken77p1, appelhaken77p2, robertson97}, that any planar graph $G$ admits a stable set of cardinality at least $\frac{|V(G)|}{4}$. Let $S \subseteq V(G_F^{\prime})$ be a stable set in $G_F^{\prime}$ of cardinally at least $\frac{|V(G_F^{\prime})|}{4} = \frac{m-4}{4}$. By deleting the inequalities that correspond to the vertices in $S$ from $Ax \le b$, applying the induction hypothesis as in Theorem \ref{th:rock-2d}, and subsequently adding these deleted inequalities back with appropriate $a$-coefficients we again obtain a simple rock extension with diameter at most $2\big(\log_{\frac{4}{3}} (\frac{3m+4}{4} - 4) +2 + 1\big) =  2\big(\log_{\frac{4}{3}} \frac{3(m-4)}{4} +2 + 1 \big)= 2\log_{\frac{4}{3}} (m-4) +4$.
\end{proof}

\added{To conclude this section, we note that in dimension four and higher for some (strongly non-degenerate) polytope  a similar argument does not work. An example of that is the polar dual of the cyclic polytope, since any pair of its facets intersect (as any two of vertices of the cyclic polytope of dimension at least four are adjacent). Furthermore, the dual of the cyclic polytope is simple, since the cyclic polytope is simplicial. A mild perturbation would even make it strongly non-degenerate (and again by adding redundant inequalities one can achieve simplex-containment). However, since the (perturbed) polar of the cyclic polytope of dimension at least four does not even have two disjoint facets, the same line of reasoning yielding logarithmic bounds in two- and three-dimensional cases cannot be applied here.} %The best bound that a rock extension can provide in this case is the general linear bound that we state in Theorem \ref{th:stron_poly_rock}.

\section{Rational polytopes and encoding sizes}\label{sec:rational-encoding}

\added{In this section we consider rational polytopes. We revisit Theorem \ref{th:bottom-top-path} and adjust its proof to ensure that for a rational polytope $P$ defined by $A \in \Rational^{m \times d},b \in \Rational^m$ the constructed rock extension $Q$ is rational as well. We also show that the encoding size of $Q$ (with respect to the inequality description) is polynomially bounded in the encoding size of $P$, denoted by $\langle A,b \rangle$.}

\deleted{For a rational $d$-polytope given by a non-degenerate system $Ax \le b$ with $A \in \Rational^{m \times d},b \in \Rational^m$ we want to argue that there exists a simple \emph{rational} rock extension $Q$ with diameter at most $2m$ such that its encoding size (with respect to the inequality description) is polynomially bounded in the encoding size of $P$, denoted by $\langle A,b \rangle$. } 

We can assume that $A$ and $b$ are integral, since one can multiply the system $Ax \le b$ by the least common \deleted{multiple of all} denominator\deleted{s} of entries of $A$ and $b$ (which has \deleted{a polynomial} encoding size \added{polynomially bounded} in $\langle A,b \rangle$). We denote the maximum absolute value of a $k\times k$ sub-determinant of $(A,b)$ by $\Delta_k$. We now adjust the proof of Theorem \ref{th:bottom-top-path} so that the extension $Q$ being constructed meets \added{the} additional requirements.

\begin{theorem}\label{cor:tpp-bottom-encoding}
For each polynomial $q_1(\cdot)$ there exists a polynomial $q_2(\cdot)$ such that for every simplex-containing non-degenerate system defining a $d$-polytope $P = P^{\le}(A,b)$ with $A\in \Integer^{m\times d}, b\in \Integer^m$, every rational $\epsilon>0$, and every rational point $o$ \replaced{such that}{with} $B^{\added{d}}_{\epsilon}(o) \subseteq P$ with $\langle\epsilon\rangle\,,\langle o \rangle \le q_1(\langle A,b \rangle)$, there exists a simple rational rock extension $Q$ that is $\epsilon$-concentrated around $(o,1)$ \replaced{such}{so} that for each vertex of $Q$ there exists a $z$-increasing path of length at most $m-d$ to the unique top vertex, and \added{such that} $\langle a \rangle \le q_2(\langle A,b \rangle)$ holds for the \added{coefficient} vector $a$ \replaced{corresponding to}{of coefficients of} the additional variable \added{in the description of $Q$}.
\end{theorem}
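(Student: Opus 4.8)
The plan is to run the inductive construction from the proof of Theorem~\ref{th:bottom-top-path} once more on a fixed integral system $Ax\le b$, a fixed rational $\epsilon>0$ and a fixed rational $o$ with $B^{d}_{\epsilon}(o)\subseteq P$, and to verify that every object produced along the way is rational with encoding size bounded by a polynomial in $L:=\langle A,b\rangle$ (where we may use $\langle\epsilon\rangle,\langle o\rangle\le q_1(L)$). The only place where the original argument leaves the field generated by the data is the choice of the tilt coefficient: it is pinned down by the requirement that $H^{=}\big((A_i,a_i),b_i\big)$ be tangent to $B^{d+1}_{\mu}\big((o,1)\big)$, i.e.\ $b_i-A_io-a_i=\mu\sqrt{\|A_i\|^2+a_i^2}$, which yields $a_i=a_i^{\star}:=\big(c_i-\mu\sqrt{c_i^2+(1-\mu^2)\|A_i\|^2}\big)/(1-\mu^2)$ with $c_i:=b_i-A_io$, an in general irrational number. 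Everything else works over $\Rational$ verbatim --- in particular the base case, where $P$ is a $d$-simplex and $Q$ is the pyramid over $P$ with apex $(o,1)$, which is exactly the rock extension with tilt vector $(b_i-A_io)_i$ --- so the resulting $Q$ is rational and inherits simplicity and the $z$-increasing paths of length at most $m-d$ from Theorem~\ref{th:bottom-top-path}.

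First I would replace the two quantities $\delta_1,\delta_2$ of Definition~\ref{def:deltas} by bounds free of square roots. By Cramer's rule a basic solution $A_B^{-1}b_B$ of $Ax\le b$ has coordinates of absolute value at most $\Delta_d$, so, using $\langle o\rangle\le q_1(L)$, one gets an explicit rational $\bar\delta_1\ge\delta_1$ with $\langle\bar\delta_1\rangle$ polynomial in $L$; likewise a nonzero distance $|A_ju-b_j|/\|A_j\|$ from a basic solution $u$ of $Ax\le b$ to a hyperplane $H^{=}(A_j,b_j)$ equals, after clearing $\det A_B$, a nonzero integer over a $d\times d$ subdeterminant of absolute value at most $\Delta_d$, divided by $\|A_j\|$, hence is at least $1/(\Delta_d\,d\,\Delta_1)=:\underline\delta_2>0$, a rational of polynomial encoding size with $\underline\delta_2\le\delta_2$. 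Deleting inequalities only removes basic solutions and hyperplanes, so $\bar\delta_1,\underline\delta_2$ remain valid for every subsystem arising in the recursion. Feeding them into the estimate of Claim~\ref{cl:constant} --- with its leading constant $4$ raised to $6$ to absorb the non-tangency explained below --- yields a \emph{single} rational $D'\ge 7$ of polynomial encoding size governing all $m-d-1$ levels. Consequently the nesting radii, defined by $\mu^{(0)}:=\epsilon$ and $\mu^{(j+1)}:=\min\{\tfrac12,\mu^{(j)}/D'\}$, are rationals with $\langle\mu^{(j)}\rangle=\bigO\big(\langle\epsilon\rangle+m\langle D'\rangle\big)$, hence polynomial in $L$; here one uses only $\langle(D')^{j}\rangle=\bigO\big(j\langle D'\rangle\big)$ and $j\le m$.

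The substance of the adjustment lies in the add-back step. When the inequality $A_\ell x\le b_\ell$ is added on top of the $\mu$-concentrated extension of the current subsystem (with $\mu$ the relevant $\mu^{(j)}$, always satisfying $\mu\le\epsilon/D'\le\epsilon/7$ and $\mu\le\tfrac12$), I would not use $a_\ell^{\star}$ but a rational $a_\ell'$ taken from the interval $\big(a_\ell^{\star}-\mu/M,\ a_\ell^{\star}\big)$, where $M:=\|A_\ell\|^2+c_\ell^2$ is a rational Lipschitz bound on $[0,c_\ell]$ for the strictly decreasing function $a\mapsto(c_\ell-a)/\sqrt{\|A_\ell\|^2+a^2}$, which is exactly the distance of $H^{=}\big((A_\ell,a),b_\ell\big)$ from $(o,1)$ there. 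From $c_\ell/\|A_\ell\|\ge\epsilon$ (because $B^{d}_{\epsilon}(o)\subseteq P\subseteq\{A_\ell x\le b_\ell\}$) together with $\mu\le\epsilon/7$ and $\mu\le\tfrac12$ one checks $a_\ell^{\star}\ge 2\mu$, so this interval is nonempty, has length $\mu/M>0$, and is contained in $(0,c_\ell)$; hence it contains a rational $a_\ell'$ of encoding size polynomial in $\langle c_\ell\rangle+\langle\mu\rangle+\langle\|A_\ell\|^2\rangle$, obtained by over-approximating the single square root in $a_\ell^{\star}$ to a correspondingly polynomial precision. Since $a_\ell'<a_\ell^{\star}$ and $a_\ell^{\star}-a_\ell'\le\mu/M$, the new hyperplane $H^{=}\big((A_\ell,a_\ell'),b_\ell\big)$ has distance in $(\mu,2\mu]$ from $(o,1)$, which lies in the corresponding open halfspace; thus $B^{d+1}_{\mu}\big((o,1)\big)$ stays strictly inside that halfspace and the combinatorial part of the proof of Theorem~\ref{th:bottom-top-path} (unique top vertex kept, old vertices and their $z$-increasing paths kept, new vertices cut on the same edges) carries over without change, with $a_\ell'>0$ as required. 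For the distance bound I would rerun the proof of Claim~\ref{cl:constant} with the tangency point $t$ replaced by the point of $H^{=}\big((A_\ell,a_\ell'),b_\ell\big)$ nearest to $(o,1)$; the proof was phrased to use only that the relevant endpoint $w$ lies in $B^{d+1}_{\mu}\big((o,1)\big)$ and does not lie above $\{z=1\}$, and the sole numerical change is $\dist(t,w)\le 3\mu$ instead of $2\mu$, turning the constant $4$ into $6$ and keeping every new non-base vertex within distance $\mu D'$ of $(o,1)$ --- precisely what the choice of the $\mu^{(j)}$ was set up to absorb.

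Putting it together, the tilt vector $a$ of the final $Q$ consists of $d+1$ entries $b_i-A_io$ from the base case and of $m-d-1$ entries $a_\ell'$, each built from $c_\ell=b_\ell-A_\ell o$, $\|A_\ell\|^2$, one $\mu^{(j)}$ and one bounded-precision square-root extraction; as all these ingredients have encoding size polynomial in $L$ (this is where $\langle\epsilon\rangle,\langle o\rangle\le q_1(L)$ enters) and only polynomially many rational operations plus one such root extraction go into each coordinate, the standard fact that encoding sizes grow at most polynomially under these operations yields a polynomial $q_2$, depending only on $q_1$, with $\langle a\rangle\le q_2(L)$. I expect the real difficulty to be exactly the coupling just used: the precision of each approximation $a_\ell'$ of $a_\ell^{\star}$ and the value of $D'$ must be picked so that \emph{simultaneously} (i) all combinatorial facts needed in Theorem~\ref{th:bottom-top-path} survive the perturbation, (ii) a Claim~\ref{cl:constant}-type bound holds at every level with one and the same polynomially sized constant, and (iii) the smallest radius $\mu^{(m-d-1)}$, which carries the factor $(D')^{-(m-d-1)}$, still has polynomial encoding size --- point (iii) being the reason $D'$ must be fixed globally rather than re-derived, possibly much larger, at each level.
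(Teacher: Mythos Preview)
Your proposal is correct and follows essentially the same approach as the paper: bound $\delta_1,\delta_2$ globally via Cramer's rule, fix one rational constant of polynomial encoding length playing the role of Claim~\ref{cl:constant}'s $D$, derive the nested radii from it, and replace the irrational tangent tilt by a rational value sandwiched so that both the combinatorics of Theorem~\ref{th:bottom-top-path} and the distance estimate of Claim~\ref{cl:constant} survive. The only difference is in how that rational replacement is produced: you approximate the square root in $a_\ell^\star$ and control the error via a Lipschitz bound (giving distance in $(\mu,2\mu]$ from $(o,1)$), whereas the paper, after translating $o$ to the origin, writes down the explicit closed form $\widehat a_i(\mu)=\big(b_i-\tfrac{\mu}{2d}(\|A_i\|_1+b_i)\big)/(1-\mu^2)$ and proves the sandwich $a_i(\mu)\le\widehat a_i(\mu)\le a_i(\mu/(4d))$ directly, thereby avoiding any square-root computation and leading straight to the explicit Algorithm~\ref{alg:rock}.
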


\begin{proof}

\replaced{Without loss of generality}{W.l.o.g} we assume \added{that} $o=\zerovec \in \interior(P)$, which implies $b > 0$. 

Again, for $m =d+1$ the statement trivially holds true for $Q = \conv\big\{P\cup\{(\zerovec,1)\}\big\} = \{x\in \Real^d \mid Ax + bz \le b, z \ge 0\}$. \added{Now we} \deleted{Let us next} consider the induction step.

First, let us obtain explicit bounds on $\delta_1$ and $\delta_2$ (from Definition \ref{def:deltas}). Due to Cramer's rule and the \replaced{integrality}{intergrality} of $A$ and $b$ each coordinate of any basi\replaced{c}{s} solution of $Ax \le b$ is \deleted{in absolute value} at most $\Delta_d$ \added{in absolute value}. Moreover, since  $\langle \det(M) \rangle \le 2\langle M \rangle$ holds for any rational square matrix $M$ \cite[Theorem~3.2]{schrijver86}, we have $\Delta_d \le 2^{2\langle A,b \rangle}$, and  therefore 
\begin{equation}\label{eq:d_1_bound}
    \delta_1 \le \Delta_d\sqrt{d} \le 2^{2\langle A,b \rangle}d\,.
\end{equation}

Now assume that a basi\replaced{c}{s} solution $u$ and a hyperplane $H^{=}(A_i,b_i)$ corresponding to a row of $Ax \le b$ with $u \notin H^{=}(A_i,b_i)$ have \added{the} Euclidean distance $\frac{|A_iu - b_i|}{||A_i||_2} = \delta_2 $. Since the least common \deleted{multiple of} denominator\deleted{s} of all \replaced{entries}{coordinates} of $u$ is at most $\Delta_d$ (due to Cramer's rule again), \added{and since} $|A_iu - b_i| \neq 0$ and \added{due to integrality of} $A_i, b_i$ \deleted{are integral}, \added{we obtain} $|A_iu - b_i| \ge \frac{1}{\Delta_d}$. Therefore, \deleted{we obtain} 
\begin{equation}\label{eq:d_2_bound}
    \delta_2 \ge \frac{1}{\Delta_d||A_i||_2} \ge (2^{2\langle A,b \rangle}d\Delta_1)^{-1}\,,
\end{equation}
where the last inequality follows from the aforementioned bound on $\Delta_d$ and \added{from} $||A_i||_2\le \Delta_1\sqrt{d}\le d\Delta_1$.

Now we can adjust the choice of the constant $D$ from Claim \ref{cl:constant}. \deleted{Using~\eqref{eq:d_1_bound} and~\eqref{eq:d_2_bound} bound the $D$ as chosen at the end of the proof of Theorem~\ref{th:bottom-top-path} we estimate:}\added{
We plug~\eqref{eq:d_1_bound} and~\eqref{eq:d_2_bound} into the upper bound on the Euclidean distance between the top vertex $(\zerovec,1)$ and a ``new'' non-basic vertex $y$ of $Q$ (see the proof of Theorem~\ref{th:bottom-top-path} and Claim \ref{cl:constant} for more details). }
\begin{equation*}
\begin{array}{rcl}
\dist((\zerovec, 1), \added{y}) & < & \mu + \added{|wy|}\\ &\stackrel{\eqref{eq:dist_third}}{\le}& \added{\mu + 4\mu(\delta_1 +1.5)\Big(1+\frac{\delta_1 +1.5}{\delta_2}\Big)}\\
 &\stackrel{\eqref{eq:d_1_bound}\eqref{eq:d_2_bound}}{\le}& \mu+ 4\mu(2^{2\langle A,b \rangle}d + 1.5)\big(1+(2^{2\langle A,b \rangle}d + 1.5)(2^{2\langle A,b \rangle}d\Delta_1)\big)  \\
 &\le &\mu \cdot \underbrace{25d^3\Delta_1 2^{6\langle A, b\rangle} }_{=:\widehat{D}}
\end{array}
\end{equation*}
\replaced{Additionally}{Furthermore}, later in the proof \replaced{we will use the fact that}{it will turn out to be useful if} $\mu$ does neither exceed  $\frac{1}{4d}$ nor $\frac{4d b_i}{||A_i||_1+b_i}$ for any $i \in [m]$. Therefore, we choose
\[
\mu := \min \bigg\{\Big\{\frac{4d b_i}{||A_i||_1+b_i}\Big\}_{i \in [m]},\frac{1}{4d},\frac{\epsilon}{\widehat{D}} \bigg\}.
\]
Note that 
%it implies $\mu \le \frac{1}{2}$ is rational 
this choice guarantees  $\mu \le \frac{1}{2}$ 
 and 
$\mu < \epsilon$ as well as that 
$\mu$ is rational
with 
$\langle \mu \rangle = \bigO(\langle A,b \rangle + \langle \epsilon \rangle)$.
%its encoding size $\langle \mu \rangle$ polynomially bounded in $\langle A,b \rangle$.

In the proof of Theorem \ref{th:bottom-top-path} we chose $a_i$ such that $H^{=}\big((A_i,a_i),b_i\big)$ is tangential to $B^{\added{d+1}}_{\mu}\big((\zerovec,1)\big)$. We now want to quantify this value. \replaced{Once again, we denote}{Denoting} the tangential point of the ball \deleted{once again} by \replaced{$t \in \Real^{d+1}$}{$T$}\deleted{,}\added{. W}e have $(A_i,a_i) \added{t} = b_i$ since $\added{t} \in H^{=}\big((A_i,a_i),b_i\big)$ and $\added{t} = (\zerovec,1) + \frac{(A_i,a_i)^T}{||(A_i,a_i)||_2}\mu$ since \replaced{$t$}{$T$} lies on the boundary of $B^{\added{d+1}}_{\mu}\big((\zerovec,1)\big)$ \deleted{and $B^{\added{d+1}}_{\mu}\big((\zerovec,1)\big)$} $\subseteq H^{\le}\big((A_i,a_i),b_i\big)$. Plugging the second equality into the first one, we obtain 
\begin{equation*}%\label{eq:c_first}
    (A_i,a_i)(\zerovec,1) + \frac{||(A_i,a_i)||_2^2}{||(A_i,a_i)||_2}\mu = a_i + \mu\sqrt{||A_i||_2^2+a_i^2} = b_i\,.
\end{equation*}
Note, that $b_i \ge b_i - a_i > 0$ holds\deleted{ since $(\zerovec,1) \in H^{<}\big((A_i,a_i),b_i\big)$}. By taking $a_i$ to the right in the last equation and squaring both sides we get 
\begin{equation*}
    \mu^2(||A_i||_2^2+a_i^2) = b_i^2 + a_i^2 - 2b_ia_i\,.
\end{equation*}
After rearranging the terms we obtain a quadratic equation
\begin{equation*}
    a_i^2(1 - \mu^2) - 2a_ib_i + b_i^2 - \mu^2||A_i||_2^2 = 0\,
\end{equation*}
with roots
\begin{equation*}
\begin{split}
a_i^{+,-} & = \frac{b_i \pm \sqrt{b_i^2 - (1 - \mu^2)(b_i^2 - \mu^2||A_i||_2^2)} }{1-\mu^2}\\
& = \frac{b_i \pm \mu\sqrt{(1-\mu^2)||A_i||_2^2 + b_i^2} }{1-\mu^2}\,.
\end{split}
\end{equation*}
We deduce that $a_i = a_i(\mu) : = a_i^-$, since $a_i^{+} \ge \frac{b_i}{1-\mu^2} \ge b_i$. Unfortunately, $a_i(\mu)$ is not necessarily rational. However we will show that \added{one can use} the rational number 
\[
\widehat{a_i}(\mu) := \frac{b_i -\frac{\mu}{2d}\big(||A_i||_1 + b_i\big)}{1-\mu^2},
\]
whose encoding size is polynomially bounded in $\langle A,b \rangle  + \langle \mu \rangle$ \added{instead of $a_i(\mu)$ when constructing the rock-extension.} \replaced{This is due to a crucial fact that $\widehat{a_i}(\mu)$}{,} satisfies
\begin{equation}\label{eq:c-sandwich}
   a_i(\mu^{\prime}) \ge \widehat{a_i}(\mu) \ge a_i(\mu)\,,
\end{equation}
with $\mu^{\prime} := \frac{\mu}{4d}$\added{, which will be shown at the end of this section}. 
Note that due to $\langle \mu' \rangle = \langle \mu\rangle +  \bigO(\langle d \rangle)$ (and the above estimate \deleted{on }\replaced{$\langle \mu \rangle = \bigO(\langle A,b \rangle + \langle \epsilon \rangle)$}{$\langle \mu \rangle$}), throughout all less than $m$ recursive steps the encoding length of $\mu'$ will be bounded by $\bigO(m\langle A,b \rangle + \langle \epsilon \rangle ) = \bigO(\langle A,b \rangle + \langle \epsilon \rangle )$ with the ``original'' $\epsilon$.  

Then, in order to construct a rational rock extension $Q$ of $P$, we use a recursively constructed  rational rock-extension $\widetilde{Q}$ of $\widetilde{P}$ that is in fact $\mu^{\prime}$-concentrated around $(\zerovec,1)$  and \deleted{then }add the inequality $A_i x + \widehat{a_i}(\mu) \le b$. 
Due to $B^{\added{d+1}}_{\mu^{\prime}}\big((\zerovec,1)\big) \subseteq H^{\le}\big((A_i, a_i(\mu^{\prime}), b_i \big)$ and $a_i(\mu^{\prime}) \ge \widehat{a_i}(\mu)$, we have  $B^{\added{d+1}}_{\mu^{\prime}}\big((\zerovec,1)\big) \subseteq H^{\le}\big((A_i, \widehat{a_i}(\mu)), b_i \big)$. Therefore, the argument for the existence of $z$-increasing paths to the top vertex of length at most $m-d$ in $Q$ is the same as in the proof of Theorem \ref{th:bottom-top-path}. On the other hand, since $\widehat{a_i}(\mu) \ge a_i(\mu)$, all ``new'' non-base vertices of $Q$ are contained in $B^{\added{d+1}}_{\epsilon}\big((\zerovec,1)\big)$. Let us shortly prove the latter. Consider Figure \ref{fig:plane_geom} once again. The point \replaced{$w$}{$W$} is now contained in a smaller ball $B^{\added{d+1}}_{\mu^{\prime}}\big((\zerovec,1)\big) \subseteq  B_{\mu}\big((\zerovec,1)\big)$ and lies in $\{z \le 1\}$. Since $\widehat{a_i}(\mu) \ge a_i(\mu)$, the hyperplane 
%$H^{\le}\big((A_i, a_i(\mu^{\prime}), b_i \big)$ 
$H^{\le}\big((A_i, \widehat{a_i}(\mu), b_i \big)$ 
intersects the edge \replaced{$\{u,w\}$}{$UW$} in a point \replaced{$\widehat{y}$}{$\widehat{Y}$} that lies on the line segment $\added{[w,y]}$. Therefore $\added{|w\widehat{y}|} \le \added{|wy|}$ and hence $\added{\widehat{y}} \in B^{\added{d+1}}_{\epsilon}\big((\zerovec,1)\big)$ as well. It remains to prove \eqref{eq:c-sandwich}.

We \replaced{start}{commence} with a sequence of estimations:
\begin{equation}\label{eq:seq-mu1}
    \begin{array}{ccl}
         \frac{\mu}{4d}\sqrt{\big(1-(\frac{\mu}{4d})^2\big)||A_i||_2^2 + b_i^2} & \stackrel{\mu \ge 0 }{\le} & \frac{\mu}{4d}\sqrt{||A_i||_2^2 + b_i^2} \\
         &  \stackrel{2||A_i||_2b_i\ge 0 }{\le}  & \frac{\mu}{4d}\big(||A_i||_2 + b_i\big)\\

        &  \stackrel{||\cdot||_2 \le ||\cdot||_1 }{\le}  & \frac{\mu}{4d}\big(||A_i||_1 + b_i\big)\,.
      \end{array}
\end{equation}
Furthermore, we have
\begin{equation}\label{eq:seq-mu2}
    \begin{array}{lcl}
        \frac{\mu}{2d}\big(||A_i||_1 + b_i\big) &  \stackrel{||\cdot||_1 \le d||\cdot||_2 }{\le}  & \frac{\mu}{2d}\big(d||A_i||_2 + b_i\big)\\
        &  \stackrel{b_i \ge 0}{\le}  & \frac{\mu}{2}\big(||A_i||_2 + b_i\big)\\
        
         & =  & \frac{\mu}{2}\sqrt{||A_i||_2^2 + b_i^2 + 2||A_i||_2b_i }\\
         
         &  \stackrel{2xy\le x^2+y^2}{\le}  & \frac{\mu}{2}\sqrt{2(||A_i||_2^2 + b_i^2)}\\
         
          &  \stackrel{4(1-\mu^2)\ge 4\frac{3}{4}=3\ge 2}{\le}  & \frac{\mu}{2}\sqrt{4(1-\mu^2)||A_i||_2^2 + 2b_i^2}\\
          &\le& \mu\sqrt{(1-\mu^2)||A_i||_2^2 + b_i^2}\,,
       
    \end{array}
\end{equation}
where $1-\mu^2\ge \frac{3}{4}$ since $\mu \le \frac{1}{2}$. Finally, let us prove \eqref{eq:c-sandwich}, where we exploit the inequalities 
$\mu \le \frac{4d b_i}{||A_i||_1+b_i}$ for all $i \in [m]$.

\begin{equation*}
    \begin{array}{ccl}
        a_i(\frac{\mu}{4d}) & = & \frac{b_i - \frac{\mu}{4d}\sqrt{\big(1-(\frac{\mu}{4d})^2\big)||A_i||_2^2 + b_i^2}}{1-(\frac{\mu}{4d})^2}\\
         &  \stackrel{\eqref{eq:seq-mu1}}{\ge}  & \frac{b_i -\frac{\mu}{4d}\big(||A_i||_1 + b_i\big)}{1-(\frac{\mu}{4d})^2}\\
         
         & = & \frac{1-\mu^2}{1-(\frac{\mu}{4d})^2} \cdot \underbrace{\textstyle \frac{b_i -\frac{\mu}{4d}\big(||A_i||_1 + b_i\big)}{1-\mu^2}}_{ \ge 0}\\
         
         &  \stackrel{\frac{1}{4d}\ge \mu \ge 0\ }{\ge} & \frac{1-\frac{\mu}{4d}}{1} \cdot\frac{b_i -\frac{\mu}{4d}\big(||A_i||_1 + b_i\big)}{1-\mu^2}\\
         
          &  = & \frac{b_i - \frac{\mu}{4d}b_i - (1-\frac{\mu}{4d})\frac{\mu}{4d}\big(||A_i||_1 + b_i\big)}{1-\mu^2}\\
         
         &  \stackrel{\mu \ge 0}{\ge} & \frac{b_i - \frac{\mu}{4d}b_i - \frac{\mu}{4d}\big(||A_i||_1 + b_i\big)}{1-\mu^2}\\
         
         &  \stackrel{\mu \ge 0}{\ge}  & \frac{b_i -\frac{\mu}{2d}\big(||A_i||_1 + b_i\big)}{1-\mu^2} =\widehat{a_i}(\mu)\\

            &  \stackrel{\eqref{eq:seq-mu2}}{\ge}  & \frac{b_i -\mu\sqrt{(1-\mu^2)||A_i||_2^2 + b_i^2}}{1-\mu^2}\\
            & = & a_i(\mu)\,.

    \end{array}
\end{equation*}
%Note that $\widehat{c}(\mu)$ has a polynomial in $\langle A,b \rangle$ encoding size.
\end{proof}

\section{Algorithmic aspects of rock extensions}\label{sec:alg}

In this section we \replaced{show}{address questions of} how to compute rock extensions efficiently and how to utilize them in order to solve \added{general} linear programming problems.  
We first give an explicit algorithm for constructing a \added{simple} rock extension \added{with linear diameter}, assuming \deleted{we have }some prior information about the polytope. In the second part of \replaced{this}{the} section we discuss a strongly polynomial time reduction of general (rational) linear programming to optimizing linear functions over rock extensions. 

The proof of \replaced{Theorem}{Corollary} \ref{cor:tpp-bottom-encoding} shows that for any rational simplex-containing non-degenerate system $Ax \le b$ of $m$ \added{linear }inequalities defining a (\deleted{necessarily }full-dimensional and simple) $d$-polytope $P$ it is possible to construct a simple rational rock extension $Q$ of $P$ with diameter at most $2(m-d)$ in strongly polynomial time, if the following additional  information is available:  an interior point $o$ of $P$ (with $\langle o \rangle$ bounded polynomially in $\langle A,b \rangle$) and a subsystem $A_{I} x\le b_{I}$ of $d+1$ inequalities defining a simplex containing $P$. Having that information at hand, we can shift the origin to $o$, scale the system to integrality, and then construct $Q$ by choosing \replaced{$a$}{$c$}-coefficients in accordance with the proof of \replaced{Theorem}{Corollary} \ref{cor:tpp-bottom-encoding}. For that we explicitly state Algorithm \ref{alg:rock}. Note that it runs in strongly polynomial time.

We also need some $\epsilon > 0$ with encoding size polynomially bounded in $\langle A,b \rangle$ and \added{such that} $B^{\added{d}}_{\epsilon}(\zerovec) \subseteq P$. We make the following explicit choice for $\epsilon$. For $B^{\added{d}}_{\epsilon}(\zerovec) \subseteq P$ to hold, $\epsilon$ should not exceed the minimum distance from $\zerovec$ to a hyperplane corresponding to a facet of $P$. To achieve polynomial encoding size we bound this value from below and choose $\epsilon := \min_{i\in[m]} \frac{b_i}{d\Delta_1} \le \min_{i\in[m]} \frac{b_i}{||A_i||_2} = \min_{i\in[m]} \dist\big(\zerovec, H^{=}(A_i,b_i) \big)$.

Algorithm \ref{alg:rock} emulates the iterative construction of a rock extension described in the proof of \replaced{Theorem}{Corollary} \ref{cor:tpp-bottom-encoding}\replaced{. It starts}{, starting} with a pyramid over \replaced{the}{a} given simplex $P^{\le}(A_{I}, b_I)$ and add\replaced{s}{ing} the inequalities indexed by $[m]\setminus I$ one by one. Note that we compute coefficients  $a_j$ in the reverse order of the iterative construction.

\begin{algorithm}[h] % enter the algorithm environment

\caption{Computing a rock extension $Q$ of $P$.} % give the algorithm a caption
\label{alg:rock} % and a label for \ref{} commands later in the document
\begin{algorithmic}[1]
\begin{spacing}{1.2}
\REQUIRE A non-degenerate system $A\in \Integer^{m\times d}, b\in \Integer^m$ defining a polytope $P$ with $\zerovec \in \interior(P)$ and a subset $I\subseteq[m]\,, |I| = d+1$ with $P^{\le}(A_{I}, b_I)$ bounded. 
\ENSURE A vector $a\in \Rational^m_{>0}$ with $\langle a \rangle$ polynomially bounded in $\langle A,b \rangle $ such that $Q = \{x\in \Real^d \mid Ax + az \le b, z \ge 0\}$ is a simple extension of $P$ having diameter at most $2(m-d)$.
\end{spacing}

\STATE $a_I := b_I$
\begin{spacing}{1.5}
\STATE $D := 25d^3\Delta_1 2^{6\langle A,b \rangle}$
\STATE $\epsilon := \min_{i\in[m]} \frac{b_i}{d\Delta_1}$
\FOR{$j \in [m]\setminus I$}
\STATE $\mu := \min \big\{\{\frac{4d b_i}{||A_i||_1+b_i}\}_{i \in [m]},\frac{1}{4d},\frac{\epsilon}{D} \big\}$ 
\STATE $a_{j} := \frac{b_j -\frac{\mu}{2d}\big(||A_j||_1 + b_j\big)}{1-\mu^2}$ 
\STATE $\epsilon := \frac{\mu}{4d}$
\ENDFOR
\end{spacing}
\end{algorithmic}
\end{algorithm}

What can we do if no interior point $o$ of $P$ is known (such that we could shift $P$ to $P-o$ in order to have $\zerovec$ in the interior), and neither is \added{the} set $I$? For now let us assume we are given a vertex $x^U$ of a strongly non-degenerate polytope $P =P^\le(A,b)$ with integral $A$ and $b$, and let $U \subseteq [m]$ be the corresponding basis of $x^U$. Then the point $o(\lambda) := x^U + \frac{\lambda}{||(A_{U})^{-1}\onevec||_1} (A_{U})^{-1}\onevec$ is an interior point of $P$ for \replaced{a}{every} small enough positive $\lambda$. This is due to the fact that $P$ is simple and hence the extreme rays of the radial cone of $P$ at $u$ are the columns of $(A_{U})^{-1}$. Hence the sum of the extreme rays \replaced{emanating}{points} from $x^U$ \added{points }into the interior of $P$\added{.} \deleted{and b}\added{B}y choosing $\lambda := \frac{1}{2}(2^{2\langle A,b \rangle}d\Delta_1)^{-1} \le \frac{1}{2}\delta_2$ (recall $\delta_2$ from Definition \ref{def:deltas} and the last inequality is due to \eqref{eq:d_2_bound}\added{ again}), we guarantee that $o(\lambda) \in \interior(P)$. Of course, before making this choice of $\lambda$ one has to scale $Ax \le b$ to integrality first. 

%Finally, lets consider the requirement of having a simplex-containing system of inequalities and knowing the subset $I$. As it was mentioned in the Section \ref{sec:rock}, it is not much of a restriction since one can add redundant inequalities to (not necessarily simplex-containing) weakly non-degenerate system $Ax \le b$. Observe that
The knowledge of $x^U$ and $U$ as above also enables us to come up with \added{the} set $I$ \deleted{as }required \replaced{by}{in} Algorithm \ref{alg:rock}. Indeed, the inequalities $A_{U}x \le b_U$ together with one additional redundant inequality $\onevec^T(A_{U})^{-T}x \le \added{d}2^{2\langle A,b \rangle} ||(A_{U})^{-1}\onevec||_1 + 1$, denoted by $\alpha x \le \beta$, form a simplex containing $P$. \replaced{In fact,}{since} \deleted{the hyperplane} $H^{\added{<}}(\alpha, \beta)$ \deleted{does not} contain\added{s} any basi\replaced{c}{s} (feasible or infeasible) solution \added{$x^V$} of $Ax \le b$ \added{(and hence the whole of $P$ as well)}, since the sum of entries of $x^V$ is at most $d\Delta_d$. Hence \deleted{the system} $Ax \le b, \alpha \added{x} \le \beta$ is \added{a} non-degenerate \added{system} \added{whenever $Ax \le b$ is one} as well\replaced{. Then $I$ can be chosen}{and we can choose $I$} as the union of $U$ and the index of $\alpha x \le \beta$. \added{Note that $o(\lambda)$ and the coefficients of the above inequality $\alpha x \le \beta$ have their encoding sizes polynomially bounded in $\langle A,b \rangle$.}

Now, after shifting the origin to $o(\lambda)$ and  scaling the system to integrality we can apply Algorithm \ref{alg:rock} to construct a rock extension of $P$. Thus we have established the following. 

%\begin{theorem} \label{th:stron_poly_rock}
%Given a weakly non-degenerate rational $d$-polytope $P=P^{\le}(A,b)\,, A \in \Rational^{m \times d},b \in \Rational^m$ and a vertex of $P$, there exists a matrix $A_Q \in \Rational^{m+2 \times d+1}$ and a vector $b_Q \in \Rational^{m+2}$ computable in strongly polynomial time, such that $Q = P^{\le}(A_Q, b_Q)$ is a simple rational rock extension of $P$ with diameter at most $2(m+1)$.
%\end{theorem}

\begin{theorem} \label{th:stron_poly_rock}
Given $A \in \Rational^{m \times d}$\replaced{,}{ and} $b \in \Rational^m$\added{, and $x^U \in \Rational^d$ such that $Ax \le b$} \replaced{is}{defining a} non-degenerate\added{,} \deleted{system of linear inequalities such that }$P=P^{\le}(A,b)$ is bounded\added{,} and \added{$x^U$ is} a vertex of $P$ one can construct \deleted{in strongly polynomial time }a matrix $A_Q \in \Rational^{(m+2) \times (d+1)}$ and a vector $b_Q \in \Rational^{m+2}$ \added{in strongly polynomial time} such that $Q = P^{\le}(A_Q, b_Q)$ is a simple rational rock extension of $P$ with at most $m+2$ facets and diameter at most $2(m-d+1)$.
\end{theorem}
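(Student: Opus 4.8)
The plan is to assemble, into a single strongly polynomial procedure, the ingredients prepared in this section: the construction of an interior point from the given vertex, the augmentation by one redundant simplex-containing inequality, and a call to Algorithm~\ref{alg:rock} (equivalently, to the construction behind Theorem~\ref{cor:tpp-bottom-encoding}). First I would replace $Ax \le b$ by an equivalent integral system, multiplying through by the least common denominator of all entries, which blows the encoding size up only polynomially; from now on $A \in \Integer^{m\times d}$, $b \in \Integer^m$. Using the given vertex $x^U$ and its basis $U \subseteq [m]$, I would form $o(\lambda) := x^U + \tfrac{\lambda}{\|(A_U)^{-1}\onevec\|_1}(A_U)^{-1}\onevec$ with $\lambda := \tfrac12(2^{2\langle A,b\rangle}d\Delta_1)^{-1}$. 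Since $P$ is simple, the columns of $(A_U)^{-1}$ are the extreme rays of the radial cone of $P$ at $x^U$, so their normalized sum points into $\interior(P)$, and the lower bound~\eqref{eq:d_2_bound} on $\delta_2$ certifies that this $\lambda$ is small enough to keep $o(\lambda) \in \interior(P)$; moreover $\langle o(\lambda)\rangle$ is polynomially bounded in $\langle A,b\rangle$.

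Next I would adjoin the single redundant inequality $\alpha x \le \beta$ with $\alpha := \onevec^T(A_U)^{-T}$ and $\beta := d\,2^{2\langle A,b\rangle}\|(A_U)^{-1}\onevec\|_1 + 1$. As argued above, $H^{<}(\alpha,\beta)$ contains every (feasible or infeasible) basic solution of $Ax \le b$, hence all of $P$, so $\alpha x \le \beta$ is redundant; the enlarged system of $m+1$ inequalities stays non-degenerate, and with $I := U \cup \{m+1\}$ the subsystem $A_I x \le b_I$ defines a simplex around $P$, so the enlarged system is simplex-containing. All new coefficients again have encoding size polynomial in $\langle A,b\rangle$.

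Now I would translate the origin to $o(\lambda)$ and rescale the resulting augmented system to integrality, so that $\zerovec \in \interior(P - o(\lambda))$ and, with the explicit rational $\epsilon := \min_{i} \tfrac{b_i}{d\Delta_1}$ satisfying $B^d_\epsilon(\zerovec) \subseteq P - o(\lambda)$ and $\langle\epsilon\rangle$ polynomially bounded, all hypotheses of Theorem~\ref{cor:tpp-bottom-encoding} are in force. Running Algorithm~\ref{alg:rock} on this system with this $I$ then yields a vector $a \in \Rational^{m+1}_{>0}$ of polynomial encoding size such that $Q := \{(x,z)\in\Real^{d+1}\mid A'x + az \le b',\ z \ge 0\}$, for the shifted rescaled augmented system $A'x \le b'$, is a simple rational rock extension that is $\epsilon$-concentrated around $(\zerovec,1)$ with a $z$-increasing path of length at most $(m+1)-d$ from every vertex to the top vertex; translating the $x$-coordinates back turns this into a rock extension of the original $P$ with identical combinatorics. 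Hence $Q$ has at most $(m+1)+1 = m+2$ facets (the base $z \ge 0$ together with the facet-inducing inequalities among $A'x\le b'$, of which the tilted copy of $\alpha x \le \beta$ may well be redundant), and concatenating two $z$-increasing paths through the top vertex bounds the diameter by $2\big((m+1)-d\big) = 2(m-d+1)$. Every step — integral scaling, forming $o(\lambda)$ and $\alpha x \le \beta$, the translation, and Algorithm~\ref{alg:rock} itself — performs a number of arithmetic operations bounded polynomially in $m$ and $d$ on numbers of polynomially bounded encoding size, so the whole construction is strongly polynomial.

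The hardest part is not any single computation but verifying the two structural hypotheses that Algorithm~\ref{alg:rock} relies on: that the explicit $\lambda$ genuinely puts $o(\lambda)$ into the interior of $P$ (this uses simplicity of $P$ together with the bound~\eqref{eq:d_2_bound} on $\delta_2$), and that non-degeneracy as well as the simplex-containing property are preserved by the augmentation with $\alpha x \le \beta$ and by the subsequent translation and rescaling. The remaining work — tracking encoding sizes through all transformations — was essentially already carried out in the proof of Theorem~\ref{cor:tpp-bottom-encoding}.
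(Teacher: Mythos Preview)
Your proposal is correct and follows essentially the same approach as the paper: scale to integrality, use the given vertex $x^U$ to manufacture the interior point $o(\lambda)$ and the single redundant simplex-containing inequality $\alpha x \le \beta$, shift the origin, rescale, and then invoke Algorithm~\ref{alg:rock} (i.e., Theorem~\ref{cor:tpp-bottom-encoding}) on the resulting system of $m+1$ inequalities to obtain the rock extension with the claimed facet and diameter bounds. The paper establishes the theorem by exactly this assembly of ingredients in the discussion preceding its statement.
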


Since the described construction of \deleted{a }rock extension\added{s} works only for the case of non-degenerate systems and requires \replaced{knowing}{to know} a vertex of the polytope, we introduce the following definition.

\begin{definition}
We call a pair $(S,u)$ a \textbf{strong input}, if $S$ is a rational non-degenerate system $Ax \le b$ defining a polytope $P$ and $u$ is a vertex of $P$.\end{definition}

\replaced{The following result provides the lever that allows to use rock extensions in the theory of linear programming.}{Next we show that the setting of strong input we are working with is general enough in order to solve general linear programs.} 

%\deleted{Next w}\added{W}e show \replaced{the following.}{that the setting of strong input we are working with is general enough in order to solve general linear programs.} 

\begin{theorem}\label{th:stronly_poly_algoLP}
If there is a strongly polynomial time algorithm  for finding optimal basi\replaced{c}{s} solutions for linear programs with  strong inputs and rational  objective functions then all rational linear programs can be solved in strongly polynomial time.
\end{theorem}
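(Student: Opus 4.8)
The plan is to give a strongly polynomial reduction which, for an arbitrary rational linear program, either reports infeasibility or unboundedness or outputs an optimal basic solution, using only a constant number of calls to the assumed oracle, each on a strong input whose encoding size is polynomial in that of the given program. After a standard strongly polynomial transformation I may assume the program is $\max\{c^Tx : Ax\le b\}$ with $A,b,c$ integral. The first step is to recall the classical encoding-size estimates for rational polyhedra: if $\{x : Ax\le b\}\neq\emptyset$ it contains a point of encoding size polynomial in $\langle A,b\rangle$, and if the program has a finite optimum it has an optimal vertex of such polynomial encoding size. Hence there is $R=2^{\mathrm{poly}(\langle A,b,c\rangle)}$ with $\langle R\rangle$ polynomially bounded such that, setting $P_R:=\{x : Ax\le b,\ -R\onevec\le x\le R\onevec\}$, we have $P_R\neq\emptyset$ iff $\{x:Ax\le b\}\neq\emptyset$, and whenever the program is feasible and bounded its optimum is attained in the interior of the box. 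Since the rows $\pm I$ occur in the description of $P_R$, this (possibly empty) polytope is pointed.

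Next I would collapse feasibility and optimization over $P_R$ into a single oracle call by a big-$M$ trick with an explicit starting vertex. If $R\onevec$ happens to satisfy $Ax\le b$ then $R\onevec$ is already a vertex of $P_R$ and I proceed directly; otherwise I set $\Lambda:=1+\max_i\big(A_i(R\onevec)-b_i\big)>0$ and consider
\[
P'':=\{(x,\lambda)\in\Real^{d+1} : Ax-\lambda\onevec\le b,\ -R\onevec\le x\le R\onevec,\ 0\le\lambda\le\Lambda\}.
\]
The point $(R\onevec,\Lambda)$ is a simple (non-degenerate) vertex of $P''$ because exactly the $d+1$ linearly independent inequalities $x_k\le R$ and $\lambda\le\Lambda$ are tight there while every other inequality has strict slack. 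For a suitable $M=2^{\mathrm{poly}(\langle A,b,c\rangle)}$ the program $\max\{c^Tx-M\lambda : (x,\lambda)\in P''\}$ is forced to attain $\lambda=0$ whenever $P_R\neq\emptyset$; so one solve simultaneously detects feasibility ($\lambda^{*}>0$ iff $P_R=\emptyset$ iff the original program is infeasible) and, in the feasible case, yields an optimal basic solution of $\max\{c^Tx : x\in P_R\}$. To turn $P''$ into a strong input I would perturb it: keep the $d+1$ rows witnessing $(R\onevec,\Lambda)$ exact and enlarge the right-hand sides of all other rows by generic, polynomially small rational amounts. This leaves $(R\onevec,\Lambda)$ a non-degenerate vertex, makes the whole hyperplane arrangement non-degenerate, and preserves boundedness; and letting the perturbation tend to $0$ along the chosen parametric path one recovers, from the basis returned by the oracle, an optimal basic solution of the unperturbed program (hence of $\max\{c^Tx:x\in P_R\}$).

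Boundedness of the original program is then decided by running the same construction once more with $2R$ in place of $R$, obtaining the optimal values $v_R$ over $P_R$ and $v_{2R}$ over $P_{2R}$. A finite optimum of $\max\{c^Tx:Ax\le b\}$ lies strictly inside the $R$-box, so it is simultaneously optimal over $P_R$ and over the larger set $\{x:Ax\le b\}\supseteq P_{2R}$; thus $v_R=v_{2R}$ exactly when the program is bounded, and $v_{2R}>v_R$ certifies unboundedness (move along a recession direction with positive objective from a point of $P_R$ to get strictly larger value inside the $2R$-box). In the bounded case the optimal basic solution already found over $P_R$ is an optimal basic solution of the original program. All of $R$, $\Lambda$, $M$ and the perturbation parameters have polynomially bounded encoding size, the non-oracle work (scaling to integrality, forming these $O(1)$ auxiliary systems, and a single $d\times d$ matrix inversion to un-perturb a returned basis) is strongly polynomial, and only $O(1)$ oracle calls are made; hence the whole algorithm is strongly polynomial.

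I expect the main obstacle to be the perturbation step, i.e.\ making precise the ``perturbation argument for non-degeneracy'' announced in Section~\ref{sec:alg}. One must commit to an explicit rational perturbation of polynomially bounded encoding size that simultaneously (i) renders the defining system non-degenerate, (ii) does not destroy the explicitly known starting vertex (which forces the perturbation to act only on the non-basic rows and to point outward there), and (iii) supports a clean ``perturbation $\to 0$'' limiting argument guaranteeing that the basis the oracle returns is genuinely an optimal basis of the original, unperturbed instance. Nailing down the sign conventions, the smallness threshold, and the bit-size bookkeeping for this step is where the real work lies; the bounding-box reduction, the big-$M$ feasibility gadget, and the box-doubling test for unboundedness are routine by comparison.
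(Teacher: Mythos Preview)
Your outline is correct and takes a recognizably different route from the paper's. The paper proceeds in two stages: it first builds an intermediate algorithm $\mathcal{A}^\star$ for LPs that come with a non-degenerate vertex, achieving non-degeneracy via the explicit perturbation $b_i\mapsto b_i+\epsilon^i$ of Lemma~\ref{lem:pert_prop} and handling unboundedness by adding one redundant inequality $\alpha x\le\beta$ and checking whether the optimal basis uses it; it then reduces the general case to $\mathcal{A}^\star$ by a classical Phase~I with a vector of slacks $s\in\Real^m$ and the auxiliary objective $\onevec^Ts$. You instead bound by an axis-parallel box, collapse feasibility and optimization into one big-$M$ problem with a single scalar relaxation variable $\lambda$ and an explicit simple starting vertex $(R\onevec,\Lambda)$, and test unboundedness by box-doubling. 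Both routes work; yours is arguably more elementary and avoids a separate Phase~I, at the cost of two oracle calls (for $R$ and $2R$) in the unboundedness test where the paper needs one.

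On the perturbation, which you rightly flag as the crux: your worry~(ii) --- that the perturbation must act only on the non-basic rows so as not to destroy the explicit starting vertex --- is unnecessary, and making a selective perturbation rigorous would indeed be awkward. The paper's Lemma~\ref{lem:pert_prop} perturbs \emph{all} right-hand sides by $\epsilon^i$; part~(3) then guarantees that for every vertex of the original polytope (in particular your non-degenerate $(R\onevec,\Lambda)$, whose basis $U$ is unique) the basic solution $A_U^{-1}(b+b^\epsilon)_U$ is a vertex of the perturbed polytope, explicit and strongly polynomially computable. Parts~(5) and~(4) of the same lemma deliver non-degeneracy of the perturbed system and transfer of the optimal basis back to the unperturbed problem, with the explicit threshold $\epsilon\le(3d\|c\|_12^{5\langle A,b\rangle})^{-1}$ handling your ``smallness'' and ``bit-size bookkeeping'' concerns. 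So you can simply invoke Lemma~\ref{lem:pert_prop} on $P''$ and your anticipated obstacle dissolves.
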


In order to prove the above theorem we first state and prove the following technical lemma.

\begin{lemma}\label{lem:pert_prop}
For all $A \in \Integer^{m \times d}$ with $\rank(A) = d$, $b \in \Integer^{m}$, $c \in  \Integer^d$ such that $P:= P^{\le}(A,b)$ is a pointed polyhedron and for every positive $\epsilon \le (3d||c||_1 2^{5\langle A,b\rangle})^{-1}$ the following holds for
$P^\epsilon:= P^{\le}(A, b+b^{\epsilon})$, where $b^\epsilon_i := \epsilon^i\,, i \in [m]$.

\begin{enumerate}[label=(\arabic*)]
    \item \label{lem:en1} $P \neq \emptyset$ if and only if $P^\epsilon \neq \emptyset$. If $P$ is non-empty, then $P^\epsilon$ is full-dimensional.
    \item \label{lem:en2} For each \deleted{feasible }basis $U$ \added{feasible }for $Ax \le b+ b^\epsilon$, the basi\replaced{c}{s} solution $A^{-1}_U b_U$ is a vertex of $P$.
    \item \label{lem:en3} For each vertex $v$ of $P$ there is a basis $U$ of $Ax \le b$ with $v = A_U^{-1}b_U$ such that $A_U^{-1}(b+b^\epsilon)_U$ is a vertex of $P^\epsilon$.

    \item \label{lem:en4} If $W$ is an optimal feasible basis for $\min\{c^Tx\mid x \in P^\epsilon\}$, then $W$ is an optimal feasible basis for $\min\{c^Tx\mid x \in P\}$ as well.
    \item \label{lem:en5} The system of linear inequalities $Ax \le b + b^\epsilon$ is non-degenerate.
\end{enumerate}
\end{lemma}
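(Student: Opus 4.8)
The plan is to prove the five assertions of Lemma~\ref{lem:pert_prop} essentially in the order stated, since each later part reuses the geometric picture of the previous ones. The overarching idea is that adding the perturbation $b^\epsilon = (\epsilon,\epsilon^2,\dots,\epsilon^m)^T$ with $\epsilon$ small relative to the subdeterminant bounds moves every facet outward by a tiny amount, so that combinatorially nothing essential changes while all degeneracies are resolved. Throughout I would use the quantitative estimates already established in Section~\ref{sec:rational-encoding}: every basic solution of $Ax\le b$ has coordinates of absolute value at most $\Delta_d\le 2^{2\langle A,b\rangle}$ and denominators at most $\Delta_d$, and any nonzero slack $|A_ix - b_i|$ at a basic solution is at least $\Delta_d^{-1}$ (the bound behind~\eqref{eq:d_2_bound}). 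The role of the hypothesis $\epsilon\le(3d\|c\|_1 2^{5\langle A,b\rangle})^{-1}$ is to make the total perturbation $\|b^\epsilon\|_\infty\le\epsilon$ (using $\epsilon\le 1$) smaller than these slack gaps, and also smaller than the gaps between distinct objective values $c^Tv$ at vertices.

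For~\ref{lem:en5}, the cleanest route is to argue that no vertex of the hyperplane arrangement $\mathcal{H}(A,b+b^\epsilon)$ lies on more than $d$ hyperplanes. Suppose $d+1$ rows $A_{i_0},\dots,A_{i_d}$ had a common point; then for the corresponding $(d+1)\times(d+1)$ system the augmented matrix $[A_J \mid (b+b^\epsilon)_J]$ would be singular, i.e. $\det[A_J\mid b_J] + (\text{a polynomial in }\epsilon\text{ with integer coefficients bounded by subdeterminants of }A)=0$; treating this as a nonzero polynomial in $\epsilon$ of degree $\le m$ whose coefficients are integers bounded in absolute value by $\Delta_{d+1}$-type quantities, it has no root in the punctured interval $(0,\epsilon_0)$ for $\epsilon_0$ of the stated magnitude. (A Vandermonde-type argument shows the polynomial is genuinely nonzero: if $\det[A_J\mid b_J]=0$ then some lower-order coefficient, coming from replacing a column of $b_J$ by $b^\epsilon_J$, is a nonzero integer because the relevant minor of $A$ is nonzero by $\rank A=d$.) For~\ref{lem:en1}: if $P=\emptyset$ then by Farkas there is $y\ge 0$ with $y^TA=0$, $y^Tb<0$, and since $y^Tb$ is a negative integer (after clearing, or at least bounded away from $0$) while $y^Tb^\epsilon$ is tiny, $y^T(b+b^\epsilon)<0$ still, so $P^\epsilon=\emptyset$; conversely $P\subseteq P^\epsilon$ since $b^\epsilon>0$, so $P\neq\emptyset\Rightarrow P^\epsilon\neq\emptyset$, and full-dimensionality of $P^\epsilon$ follows because each facet inequality is slack at a point of $P$ and the outward shift by $\epsilon^i>0$ opens up a full-dimensional region (or: $P^\epsilon$ is non-degenerate by~\ref{lem:en5} and non-empty, hence full-dimensional by the remark after Definition~\ref{def:non_deg_poly}).

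For~\ref{lem:en2} and~\ref{lem:en3} I would use the following continuity/stability fact: if $U$ is a basis with $A_U$ nonsingular, then $A_U^{-1}(b+b^\epsilon)_U = A_U^{-1}b_U + A_U^{-1}b^\epsilon_U$, and the correction term has norm at most $\|A_U^{-1}\|\cdot\|b^\epsilon_U\|\le (\text{something like }\Delta_{d-1}d/|\det A_U|)\cdot\epsilon\le \Delta_d d\,\epsilon$, which by the choice of $\epsilon$ is smaller than $\tfrac12\Delta_d^{-1}\le\tfrac12\delta_2$, i.e. smaller than every nonzero slack. Hence for~\ref{lem:en2}: if $U$ is feasible for $Ax\le b+b^\epsilon$, then its basic solution is within $\delta_2/2$ of the basic solution $x^U:=A_U^{-1}b_U$ of $Ax\le b$; any row $A_k$ with $A_kx^U>b_k$ would have slack $\ge\Delta_d^{-1}$ at $x^U$, hence the perturbed point would still violate row $k$ by more than $\delta_2/2-\epsilon>0$ once we also account for the $\le\epsilon$ shift of $b_k$ — contradiction; so $x^U$ satisfies all of $Ax\le b$ and, being a basic solution, is a vertex of $P$. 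For~\ref{lem:en3}: given a vertex $v$ of $P$, pick any basis $U$ defining it; then $A_U^{-1}(b+b^\epsilon)_U$ is within $\delta_2/2$ of $v$, and the same slack argument shows it satisfies $Ax\le b+b^\epsilon$ (rows tight at $v$ become slightly slack because $b^\epsilon>0$; rows slack at $v$ stay slack). Finally~\ref{lem:en4}: if $W$ is optimal for $\min\{c^Tx : x\in P^\epsilon\}$, then by~\ref{lem:en2} $A_W^{-1}b_W$ is a vertex $v^\star$ of $P$, and for any other vertex $v$ of $P$, part~\ref{lem:en3} gives a vertex $v'$ of $P^\epsilon$ with $\|v'-v\|\le \delta_2/2$, so $c^Tv\ge c^Tv'-\|c\|_1\cdot\tfrac{\delta_2}{2}\ge c^Tv^{\star,\epsilon}-\|c\|_1\cdot\tfrac{\delta_2}{2}$ where $v^{\star,\epsilon}=A_W^{-1}(b+b^\epsilon)_W$; since also $\|v^{\star,\epsilon}-v^\star\|\le\delta_2/2$ we get $c^Tv\ge c^Tv^\star - \|c\|_1\delta_2$; but distinct vertex values of $c^Tx$ over $P$ differ by at least $\Delta_d^{-2}$-type quantities, which by the hypothesis on $\epsilon$ exceeds $\|c\|_1\delta_2$, forcing $c^Tv\ge c^Tv^\star$, i.e. $W$ is optimal for $P$.

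The main obstacle I anticipate is making the two ``gap'' estimates fully rigorous with explicit constants matching the stated bound $\epsilon\le(3d\|c\|_1 2^{5\langle A,b\rangle})^{-1}$: namely (a) the minimum nonzero slack $\delta_2\ge\Delta_d^{-1}\|A_i\|_2^{-1}$ and the perturbation magnitude $\|A_U^{-1}b^\epsilon_U\|$, which requires carefully bounding $\|A_U^{-1}\|_\infty$ by a ratio of subdeterminants and using $\langle\det M\rangle\le 2\langle M\rangle$ as in Section~\ref{sec:rational-encoding}; and (b) the minimum gap between distinct values $c^Tv$, $c^Tv'$ at vertices of $P$, which one bounds below by writing $c^T(v-v')$ over a common denominator dividing $\Delta_d^2$ and noting the numerator is a nonzero integer. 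Assembling these into the single clean threshold in the hypothesis is the bookkeeping-heavy part; everything else is the soft continuity argument sketched above. I would present~\ref{lem:en5} and~\ref{lem:en1} first as they are self-contained, then the perturbation-magnitude estimate as a displayed inequality, and then derive~\ref{lem:en2}, \ref{lem:en3}, \ref{lem:en4} in quick succession from it.
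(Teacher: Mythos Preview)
Your overall strategy---perturb, bound the displacement of basic solutions against the minimum nonzero slack and the minimum objective-value gap, and use Cauchy-type root bounds---matches the paper's, and your sketches for~\ref{lem:en1}, \ref{lem:en2}, \ref{lem:en4}, \ref{lem:en5} are in reasonable shape. However, there is a genuine gap in your argument for~\ref{lem:en3}.

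You write: ``given a vertex $v$ of $P$, pick \emph{any} basis $U$ defining it; then $A_U^{-1}(b+b^\epsilon)_U$ \dots\ satisfies $Ax\le b+b^\epsilon$ (rows tight at $v$ become slightly slack because $b^\epsilon>0$; rows slack at $v$ stay slack).'' The parenthetical is precisely where the argument fails. For a row $k\notin U$ that is \emph{tight} at $v$ (i.e.\ $A_k v=b_k$, which happens whenever $v$ is a degenerate vertex of $P$), the perturbed basic point $x^{U,\epsilon}=v+A_U^{-1}b^\epsilon_U$ has slack
\[
(b_k+\epsilon^k)-A_k x^{U,\epsilon}\;=\;\epsilon^k-A_k A_U^{-1}b^\epsilon_U,
\]
and nothing forces this to be nonnegative: your $\delta_2$-bound is useless here because the unperturbed slack is zero, not $\ge\delta_2$. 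A concrete counterexample in $\Real^2$: take the rows $x_1\le0$, $x_2\le0$, $x_1+x_2\le0$. The origin is a (degenerate) vertex, but the basis $U=\{1,2\}$ gives $x^{U,\epsilon}=(\epsilon,\epsilon^2)$, which violates the third perturbed inequality $x_1+x_2\le\epsilon^3$ for small $\epsilon$. So ``any basis'' is simply false.

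The paper repairs this by choosing $U$ with care. Let $E(v)$ index all rows tight at $v$. Since $\rank A_{E(v)}=d$ and $v\in P^{\le}\big(A_{E(v)},(b+b^\epsilon)_{E(v)}\big)$ (because $b^\epsilon>0$), that restricted perturbed system is pointed and nonempty, hence has a \emph{feasible} basis $U\subseteq E(v)$. For this particular $U$, feasibility of $x^{U,\epsilon}$ for rows in $E(v)$ holds by the choice of $U$, and feasibility for rows outside $E(v)$ follows from your slack argument (this is exactly the paper's property~$(\mathcal{P})$). In short, \ref{lem:en3} needs the lexicographic-style basis that the perturbation is designed to single out; once you make that choice explicit, the rest of your sketch goes through.
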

\begin{proof}

A proof for statement \ref{lem:en1} can be found in \cite[Chapter~13]{schrijver86}. 

We \replaced{start}{commence} with the simple observation that $U$ is a (feasible or infeasible) basis for $Ax \le b$ if and only if it is a \deleted{(feasible or infeasible)} basis for $Ax \le b+b^\epsilon$ since both system have the same left-hand side matrix $A$. We will  refer to any such  $U$  as a basis of $A$. The following property $\mathcal{\added{(}P\added{)}}$ turns out to be useful for the proof: 
\newline
\begin{enumerate}
\item[\added{$\mathcal{(P)}$}] {\em If a basi\replaced{c}{s} (feasible or infeasible) solution $x^U := A_U^{-1}b_U$ of $Ax \le b$ with basis $U$ is contained in $H^{<}(A_{i},b_i)$ or $H^{>}(A_{i},b_i)$ for some $i \in [m]$, then the basi\replaced{c}{s} (feasible or infeasible) solution $x^{U, \epsilon} := A_U^{-1}(b+b^\epsilon)_U$ of $Ax \le b + b^\epsilon$ is contained in $H^{<}(A_{i},(b+b^\epsilon)_i)$ or $H^{>}(A_{i},(b+b^\epsilon)_i)$, respectively. }
\end{enumerate}
\vskip 1em

\noindent We later show that $\mathcal{\added{(}P\added{)}}$ holds for all small enough positive $\epsilon$, but before let us observe how \ref{lem:en2} and \ref{lem:en3} follow from $\mathcal{\added{(}P\added{)}}$.

\added{For \ref{lem:en2},} \replaced{a}{A}ssume $x^{U, \epsilon}$ is a feasible basi\replaced{c}{s} solution of $Ax \le b+ b^\epsilon$ with basis $U$ such that $x^U:=A_U^{-1}b_U$ is infeasible for $Ax \le b$, i.e.\added{,} there exists some $i\in [m]$ with $x^U \in H^{>}(A_i,b_i)$. If $\mathcal{\added{(}P\added{)}}$ holds, then the latter\deleted{ contradicts}, however,\added{ contradicts} the feasibility of $x^{U, \epsilon}$ for $Ax \le b+ b^\epsilon$. Thus  $\mathcal{\added{(}P\added{)}}$ implies \ref{lem:en2}.

In order to see that $\mathcal{\added{(}P\added{)}}$ also implies  \ref{lem:en3}, let $A_{E(v)}x \le b_{E(v)}$ consist of  all inequalities from $Ax \le b$ that are satisfied with  equality at a vertex $v$ of $P$. Note that the set of feasible bases of $A_{E(v)}x \le (b+b^\epsilon)_{E(v)}$ is non-empty, since $P^{\le}(A_{E(v)},(b+b^\epsilon)_{E(v)})$ is pointed\added{. The latter holds} because of $\rank (A_{E(v)})=d$ (as $v$ is a vertex of $P$) \added{and since $P^{\le}(A_{E(v)},(b+b^\epsilon)_{E(v)})$ itself is non-empty} with $v \in P^{\added{\le}}(A_{E(v)},(b+b^\epsilon)_{E(v)})$ (due to $b^\epsilon \ge \zerovec$). We now can choose $U$ as any feasible basis of $A_{E(v)}x \le (b+b^\epsilon)_{E(v)}$. We clearly have $v = A_U^{-1}b_U$ and $A_{[m]\setminus {E(v)}} v < b_{[m]\setminus {E(v)}}$ by the definition of ${E(v)}$. Hence the basi\replaced{c}{s} solution $A^{-1}_U(b+b^{\epsilon})_U$ is feasible for $Ax \le b+b^\epsilon$ due to $\mathcal{\added{(}P\added{)}}$.

\begin{claim}
The property $\mathcal{\added{(}P\added{)}}$ holds for $0< \epsilon \le (3d||c||_1 2^{5\langle A,b\rangle})^{-1}$ (we clearly can assume $c \ne \zerovec$). 
\end{claim}
\begin{proof}
Let $x^U=A_U^{-1}b_U$ be a basi\replaced{c}{s} (feasible or infeasible) solution of $Ax \le b$ with a basis $U \subseteq [m]$, and let $H^{=}(A_{i},b_i)$, with $i \in [m]\setminus U$ be a hyperplane with $x^U \notin H^{=}(A_{i},b_i)$. Furthermore, let $x^{U, \epsilon} := A_U^{-1}(b+b^\epsilon)_U$ be  the corresponding basi\replaced{c}{s} (feasible or infeasible) solution of the perturbed system. Consider the following expression
\begin{equation}\label{eq:dist-poly}
    \begin{split}
        A_{i}x^{U, \epsilon} - (b+b^\epsilon)_i & =  \sum_{j=1}^d A _{i,j}x^{U, \epsilon}_j - (b+b^\epsilon)_i \\ &= \frac{ \sum_{j=1}^d A_{i,j} \det A^{j = b+b^\epsilon}_U - (b+b^\epsilon)_i \det A_U }{\det A_U} =: h_{U,i}(\epsilon)\,,
    \end{split}
\end{equation}
where $A_U^{j=q}$ denotes the square $d \times d$ matrix arising from $A_U$ by replacing the $j$-th column  by the vector  $q$. Note that $h_{U,i}(\epsilon)$ is a univariate polynomial in $\epsilon$ with its free coefficient $\alpha_0 = h_{U,i}(0) = A_i x^U - b_i \neq 0$ due to $x^U \notin H^{=}(A _{i},b_i \big)$. Therefore the property $\mathcal{\added{(}P\added{)}}$ holds if $\epsilon > 0$ is small enough, such that $h_{U,i}(\epsilon)$ has the same sign as $\alpha_0$. We will need the following result on roots of univariate polynomials. See \cite[Lemma 4.2]{bienstock22}, a proof can be found in \cite[Theorem 10.2]{basu06}.

\begin{lemma}[Cauchy]\label{lem:poly_roots}
Let $f(x) = \alpha_n x^n + \dots + \alpha_1 x + \alpha_0$ be a polynomial with real coefficients and $\alpha_0 \neq 0$. Let $\bar{x} \neq 0$ be a root of $f(x)$. Then $\frac{1}{\delta} \le |\bar{x}|$ holds with $\delta = 1 + \max \big\{ \big|\frac{\alpha_1}{\alpha_0}\big|, \dots, \big| \frac{\alpha_n}{\alpha_0}\big| \big\}$.
\end{lemma}

Hence $\mathcal{\added{(}P\added{)}}$ holds for all $0 < \epsilon < \frac{1}{\delta}$ (with $\delta$ chosen as in the lemma with respect to $h_{U,i}$) since there are no roots of $h_{U,i}(\epsilon)$ in the interval $(-\frac{1}{\delta}, +\frac{1}{\delta})$. \replaced{To obtain an estimate on $\epsilon$ we have}{Aiming} to bound $\delta$ from above\replaced{. Therefore, let us take a closer look at }{, we hence have to bound} the coefficients of $h_{U,i}(\epsilon)$. Due to Cramer's rule, the integrality of $A$ and $b$,  and \added{since} $|\det A_U| \le \Delta_d$, the absolute value of 
each non-vanishing coefficient of $h_{U,i}(\epsilon)$ is at least $\frac{1}{\Delta_d}$. On the other hand, \added{we claim that the magnitude of any non-free coefficient $h^k_{U,i}(\epsilon)$ of $h_{U,i}(\epsilon)$}\deleted{ all coefficients are}\added{ is} bounded \deleted{in absolute value }from above by $\prod_{(i,j) \in [m]\times[d]}(1+|a_{ij}|) \prod_{i \in [m]} (1+|b_i|) \le 2^{\langle A,b\rangle}$\deleted{,}\added{. Observe that the latter product is the sum $\Sigma$ over all subsets of indices of entries of $(A,b)$ with each summand being the product of (absolute values) of the entries in question. Using column expansion for determinants in \eqref{eq:dist-poly}, one can come up with a formula for coefficients $h^k_{U,i}\,, k \in [m]$:} 
\begin{equation*}
\added{
    h^k_{U,i} = \begin{cases}
        \frac{1}{\det A_U}\sum_{j=1}^d A_{i,j}\det A^{j=\emptyset}_{U\setminus k}, & \text{for $k\in U$}\\
    -1, & \text{for $k=i$}\\
    0, & \text{for $k\in[m]\setminus(U\cup\{i\})$}
  \end{cases}
  }
\end{equation*}
\added{where $A^{j=\emptyset}_{U\setminus k}$ denotes the square matrix obtained by deleting the $j$th column and the $k$th row of $A_U$. To upper bound $|h^k_{U,i}|\,, k \in [m]$ we use the Leibniz formula for determinants in the above equation, the triangle inequality for absolute value, and the fact  $\frac{1}{|\det A_U|} \le 1$. One can convince themselves that the upper bounding expression obtained therein is, in fact, a subsum of $\Sigma$, and hence $|h^k_{U,i}| \le 2^{\langle A,b\rangle}$ for each $k \in [m]$.} \deleted{
by Leibniz formula each of them is $\frac{1}{\det A_U} (\le 1)$ times a sum $s_1 + \dots + s_q$ with $|s_k|= |\prod_{(i,j) \in F_k}a_{ij} \prod_{i \in F_k} b_i|$ for pa\added{i}rwise different sets $F_1,\dots,F_q \subseteq ([m]\times[d]) \cup [m]$. }
%$A$ and $b$ are integral and by Leibniz formula for determinant, each of the coefficients is a sum of products (multiplied with $-1$ perhaps) of at most $d-1$ entries (staying at pairwise distinct positions) of $(A,b)_{U\cup\{i\}}$. 
Therefore $\delta \le 1 + \Delta_d 2^{\langle A,b \rangle} \le 2\cdot2^{ 3\langle A,b \rangle}$ holds, where the last inequality follows from $\Delta_d \le 2^{2\langle A,b\rangle }$. 
%$h_{U,i}(\epsilon)$ has the same sign as its free coefficient $\alpha_0$ 
For $0<\epsilon \le (3d||c||_12^{5\langle A,b \rangle})^{-1}$ we thus indeed have $\epsilon <  \frac{1}{2}2^{-3\langle A,b \rangle} \le \frac{1}{\delta}$ (as $c\ne\zerovec$ is integral). \added{Note that such a choice of $\epsilon$ is in fact determined by a later observation.}
\end{proof}

Next, to show \ref{lem:en5} (before we establish \ref{lem:en4}) let us assume that $Ax \le b + b^{\epsilon^\prime}$ is not non-degenerate for some $\epsilon^\prime \le (3d||c||_1 2^{5\langle A,b\rangle})^{-1}$. Hence there is a basis  $U \subseteq [m]$  of $A$ with corresponding   (feasible or infeasible) basi\replaced{c}{s} solutions  $x^{U, \epsilon^\prime}$ and $x^U$ of the perturbed and \deleted{of} the unperturbed system, respectively, such that  there exists $i \in [m]\setminus U$ with $x^{U, \epsilon^\prime} \in H^{=}(A_i, (b+b^{\epsilon^\prime})_i)$, thus $h_{U,i}(\epsilon^\prime) = 0$. Due to Lemma \ref{lem:poly_roots} (and the upper bound on $\epsilon^\prime$) this implies $h_{U,i}(0) = 0$, thus $x^U \in H^=(A_i,b_i)$. 
Since $U$ is a basis of $A$, there exists some $\lambda \in \Real^d$ with $\lambda^TA_U = A_i$. We have  $b_i = A_ix^U = \lambda^TA_Ux^U = \lambda^T b_U$. \replaced{We have}{Hence} $h_{U,i}(\epsilon) = A_{i}x^{U, \epsilon} - (b+b^\epsilon)_i = \lambda^T A_{U} (A_U)^{-1}(b+b^\epsilon)_U - (b+b^\epsilon)_i = \lambda^T b^\epsilon_U - \epsilon^i$\added{ where we used $A_i=\lambda^TA_U$ and $x^{U,\epsilon^\prime}=(A_U)^{-1}(b+b^\epsilon)_U$ for the first equality and $\lambda^T b_U = b_i$ and $b^\epsilon_i=\epsilon^i$ for the second one. Hence $h_{U,i}(\epsilon)$} is not the zero polynomial because of $i \notin U$. 
Consequently,  there exists a polynomial $g_{U,i}(\epsilon)$ such that $h_{U,i}(\epsilon) = \epsilon^r g_{U,i}(\epsilon)$ with $ r \ge 1$ and $g_{U,i}(0) \neq 0$.
Applying Lemma \ref{lem:poly_roots} to $g_{U,i}(\epsilon)$ and bounding its coefficients in exactly the same way as for $h_{U,i}(\epsilon)$ yields that there are no roots of $g_{U,i}(\epsilon)$, and therefore no roots of $h_{U,i}(\epsilon)$, in the interval $(0, \frac{1}{2}2^{-3\langle A,b \rangle})$, thus contradicting $x^{U, \epsilon^\prime} \in H^{=}(A_{i},(b+b^{\epsilon^{\prime}})_i)$.

Finally, in order to show \ref{lem:en4}, we first prove the following claim. 

\begin{claim} \label{cl:obj-diff}
Let $U \subseteq [m]$ be a basis of $A$ with $x^{U, \epsilon}$ and $x^U$ being the corresponding (feasible or infeasible) basi\replaced{c}{s} solutions of \deleted{the }$Ax \le b+b^\epsilon$ and $Ax \le b$, respectively. Then $0 <\epsilon \le (3d||c||_1 2^{5\langle A,b\rangle})^{-1}$ implies $|c^Tx^U - c^Tx^{U, \epsilon}| < \frac{1}{2\Delta_d^2}$.
\end{claim}
\begin{proof}
By Cramer's rule\added{ and due to the triangle inequality}, we have

\begin{equation} \label{eq:det_poly}
    |c^T(x^U-x^{U, \epsilon})| \le \sum_{j=1}^d|c_j||x^U_j - x^{U, \epsilon}_j| = 
     \sum_{j=1}^d|c_j| \bigg| \underbrace{ \frac{\det A_U^{j=b} - \det A_U^{j = b+b^\epsilon}}{\det A_U } }_{=:f_U^j(\epsilon)} \bigg|\,.
\end{equation} 
To prove the claim it suffices to show that for all $0 <\epsilon \le (3d||c||_1 2^{5\langle A,b\rangle})^{-1}$ we have $|f_U^j(\epsilon)| < \frac{1}{2|c_j|d|\Delta_d^2} \added{=: \beta_0^j}$ for each $j\in [d]$ with $c_j \neq 0$.
In order to establish this, let $j \in [d]$ be an index with $c_j \neq 0$. Due to $f_U^j(0) = 0$ we have $f^j_U(\epsilon) = \alpha_l\epsilon^l + \dots + \alpha_1\epsilon$ with some $\alpha_1,\dots,\alpha_l \in \Rational$. For $\beta_0:=\frac{1}{2|c_j|d\Delta_d^2}$ and $f_U^{j\pm}(\epsilon) := f_U^j(\epsilon) \pm \beta_0^{\added{j}}$ we have $f_U^{j-}(0) < 0 < f_U^{j+}(0)$. Due to Lemma \ref{lem:poly_roots}, the polynomial\deleted{e}s $f_U^{j\pm}(\epsilon)$ thus have no roots in the interval $\big(-\frac{1}{\delta}, +\frac{1}{\delta}\big)$, where $\delta = 1 + \max \big\{ \big|\frac{\alpha_1}{\beta_0}\big|, \dots, \big| \frac{\alpha_l}{\beta_0}\big| \big\}$. Hence in order to establish $|f_U^j(\epsilon)| < \beta_0^{\added{j}}$ it suffices to show $\epsilon < \frac{1}{\delta}$.
In order to prove this we bound $\delta$ from above (thus $\frac{1}{\delta}$ from below) by upper bounding the coefficients $\alpha_k$ for all $k \in [l]$. \replaced{Using the same line of arguments as above}{From \added{the} Leibniz\deleted{'} formula (and the integrality of $\det A_U$)}\added{ we }once again\deleted{ we} conclude that $\alpha_k \le \prod_{(i,j) \in [m]\times[d]}(1+|a_{ij}|) \prod_{i \in [m]} (1+|b_i|) \le 2^{\langle A,b\rangle}$\added{, $k \in [l]$}. Hence $\frac{1}{\delta} \ge (1 + 2d|c_j|\Delta_d^2|2^{\langle A, b \rangle})^{-1} \added{>} (3d||c||_12^{5\langle A,b \rangle})^{-1} \ge \epsilon$ as required. 
\end{proof}

To complete the proof of claim \ref{lem:en4} of Lemma \ref{lem:pert_prop}, let $x^{W, \epsilon}:=A_W^{-1}(b+b^\epsilon)_W$ be an optimal feasible basi\replaced{c}{s} solution for $\min\{c^Tx\mid x \in P^\epsilon\}$ with \added{an} optimal basis $W$. Thus, due to \ref{lem:en2}, $x^W:=A_W^{-1}b_W$ is a feasible basi\replaced{c}{s} solution of $Ax \le b$. Furthermore,  let $v$ be an optimal vertex of $P$ with respect to minimizing $c$ and let $U$ be a basis of $A$ with $v = x^U = A_U^{-1}b_U$ such that $x^{U, \epsilon}:=A_U^{-1}(b+b^\epsilon)_U$ is a vertex of $P^\epsilon$ (such a basis $U$ exists by statement \ref{lem:en3} of Lemma \ref{lem:pert_prop}). Assume $x^W$ is not optimal for $\min\{c^Tx\mid x \in P\}$. Then we have $\deleted{|}c^T(x^W-x^U)\deleted{|} \ge \frac{1}{\Delta_d^2}$, since $c$ is integral and the least common denominator\deleted{s} of the union of the coordinates of $x^W$ and $x^{U}$ is at most $\Delta_d^2$ (as  the least common \replaced{denominator}{multiple} of \replaced{entries}{the coordinates} of $x^W$ is at most $\Delta_d$ \added{by Cramer's rule }and so is the least common \replaced{denominator}{multiple} of \replaced{entries}{the coordinates} of $x^U$).  
 But this contradicts
\begin{equation}
    c^T(x^W - x^U) = \underbrace{c^T(x^W - x^{W, \epsilon})}_{<\frac{1}{2\Delta_d^2}} + \underbrace{c^T(x^{W, \epsilon} - x^{U, \epsilon})}_{\le 0} + \underbrace{c^T(x^{U, \epsilon} - x^U)}_{<\frac{1}{2\Delta_D^2}} < \frac{1}{\Delta_d^2}\,,
\end{equation}
where we used Claim \ref{cl:obj-diff} for bounding the first and the third term and the optimality of $x^{W, \epsilon}$ for bounding the second one. 
\end{proof}

Now we can finally return to the proof of Theorem \ref{th:stronly_poly_algoLP}. 

\begin{proof}[Proof of Theorem \ref{th:stronly_poly_algoLP}]
\added{We can assume without loss of generality that the polyhedron $P$ of feasible solutions to a general linear program is pointed. Otherwise either the objective function is contained in the orthogonal complement of the lineality space of $P$ (in which case one obtains a pointed feasible polyhedron by intersecting with the latter), or the problem is unbounded.} Let $\mathcal{A}$ be a strongly polynomial time algorithm for finding optimal basi\replaced{c}{s} solutions for linear programs with strong inputs and rational objective
functions. We first use $\mathcal{A}$ to devise a strongly polynomial time algorithm $\mathcal{A}^\star$ for finding optimal basi\replaced{c}{s} solutions for arbitrary rational linear programs $\min\{c^Tx\mid Ax \le b\}$ if a \emph{non-degenerate vertex} $v$ of $P := P^{\le}(A,b)$ is specified within the input, i.e., a vertex for which there is a unique basis $U\subseteq [m]$ with $x^U=v$. 

In order to describe how $\mathcal{A}^\star$ works, we may assume that (after appropriate scaling) its input data $A,b,c$ are integral. With 
$\epsilon := (3d||c||_1 2^{5\langle A,b\rangle})^{-1}$ let $P^\epsilon := \{x \in \Real^d \mid Ax \le b+b^\epsilon \}$. Due to the uniqueness property of $U$ and part (3) of Lemma \ref{lem:pert_prop}, $U$ is also a feasible basis of the perturbed system. We scale that perturbed system to integrality, obtaining a non-degenerate (part (5) of Lemma \ref{lem:pert_prop}) system $A'x\le b'$ with $P^\epsilon := \{x \in \Real^d \mid A'x \le b' \}$ and a vertex $v'=x^{U,\epsilon}$. 
Then, as discussed in the context of Theorem \ref{th:stron_poly_rock}, we add the inequality 
\(
\onevec^T(A^\prime_{U})^{-T}x \le \added{d}2^{2\langle A^\prime,b^\prime \rangle} ||(A^\prime_{U})^{-1}\onevec||_1+1
\),
denoted by $\alpha x \le \beta$, to $A'x \le b'$ and thus obtain a non-degenerate bounded system $\widetilde{A}x \le \widetilde{b}$ with a simplex-defining subsystem of $d+1$ inequalities. Let \deleted{us define }$\widetilde{P} := P^{\le}(\widetilde{A}, \widetilde{b})$.
Note that the problem $\min\{c^Tx\mid x\in P\}$ is unbounded if and only if $\min\{c^Tx\mid x\in P^\epsilon\}$ is unbounded since the polyhedra $P$ and $P^{\epsilon}$ have the same characteristic cone. Moreover, $\min\{c^Tx\mid x\in P^\epsilon\}$ is unbounded if and only if an optimal basis $W$ (corresponding to any optimal vertex $x^W$) of $\min\{c^Tx\mid x\in \widetilde{P}\}$ contains the added inequality $\alpha x \le \beta$ and the unique extreme ray of the radial cone of $\widetilde{P}$ at $x^W$ \added{that is }not contained in $H^{=}(\alpha, 0)$ has positive scalar product with $c$ (recall that the polytope $\widetilde{P}$ is simple).
Thus, in order to solve $\min\{c^Tx\mid x\in P\}$ in strongly polynomial time, we can apply algorithm $\mathcal{A}$ to $\min\{c^Tx\mid x\in \widetilde{P}\}$ (providing the algorithm with the vertex $v'$ of $\widetilde{P}$)\replaced{.}{,} \deleted{since a}\added{A}ny optimal basis of the latter problem either proves that the former problem is unbounded or is an optimal basis of the former problem due to part \ref{lem:en4} of Lemma \ref{lem:pert_prop}.

Finally, let us assume that we are faced with an arbitrary linear program  in the form $\min \{c^Tx \mid Ax \le b, x \ge 0\}$ with $A \in \Integer^{m \times d}\,,b \in \Integer^m$ and $c \in \Integer^d$ (clearly, each rational linear program can be reduced to this form, for instance by splitting the variables into $x^+$ and $x^-$ and scaling the coefficients to integrality) and let $P := P^{\le}(A,b) \cap \Real_{\ge 0}^d$.
Due to parts \ref{lem:en1} and \ref{lem:en5} of Lemma \ref{lem:pert_prop} the perturbed system $Ax \le b+b^\epsilon, -x \le o^{\epsilon}$ with $b^\epsilon_i := \epsilon^i$ for all $ i \in [m]$ and $o^\epsilon_j := \epsilon^{m+j}$ for all $j \in [d]$ is non-degenerate for  $\epsilon :=  (3d||c||_1 2^{5(\langle A,b \rangle + \langle-\mathbbm{I}_d, \zerovec_d \rangle )})^{-1}$ with the polyhedron $P^\epsilon := \{x \in \Real^d \mid Ax \le b+b^\epsilon, -x \le o^{\epsilon} \}$ being non-empty (in fact: full-dimensional) if $P \neq \emptyset$ and empty otherwise. 

We follow a classical  \emph{Phase I} approach by first solving the auxiliary problem $\min \{ \onevec_m^T s \mid (x, s) \in G\}$ with 
\[
    G := \{ (x, s) \in \Real^{d+m} \mid Ax - s \le b + b^\epsilon\,, -x \le o^{\epsilon}\,, s \ge \zerovec \}.
\]
Note that $(\tilde{x},\tilde{s})$
with $\tilde{x}_j=-\epsilon^{m+j}\,,j \in [d]$  and $\tilde{s}_i=\max\{\added{\sum_{j\in[d]}A_{ij}\epsilon^{m+j}-b_i - \epsilon^i},0\}$, $i \in [m]$ is a 
vertex of $G$, which is defined by a unique basis \added{$\widetilde{U} = [m+d]$ (equations with slack variables and $-x\le o^\epsilon$) since $\tilde{s}_i \neq 0$ for any $i \in [m]$.} \replaced{The latter is due to the fact that $P^\epsilon$ is not-degenerate by Lemma \ref{lem:poly_roots} part \ref{lem:en4}.}{as for every $i \in [m]$ we have (once more employing Lemma \ref{lem:poly_roots}) 
$-b_i - \epsilon^i - \sum_{j\in[d]}A_{ij}\epsilon^{m+j} \neq 0$ due to the integrality of $A$ and $b$ and the choice of  $\epsilon$}. Hence we can apply algorithm $\mathcal{A}^\star$ in order to compute an optimal vertex \added{$(x^\star,s^\star)$} of the auxiliary problem 
$\min \{ \onevec_m^T s \mid (x, s) \in G\}$. If $\onevec^Ts^\star \neq 0$  holds, then we can conclude $P^{\epsilon} = \emptyset$, thus  $P=\emptyset$. Otherwise, 
$x^\star$ is a vertex of $P^{\epsilon}$ that clearly is non-degenerate (in fact, $P^{\epsilon}$ is simple). Thus we can solve $\min \{c^Tx \mid x \in P^{\epsilon}\}$ by using algorithm $\mathcal{A}^\star$ once more. If the latter problem turns out to be unbounded then so is $\min \{c^Tx \mid x \in P\}$ (as $P$ and $P^\epsilon$ have the same characteristic cone). Otherwise, the optimal basis of $\min \{c^Tx \mid x \in P^{\epsilon}\}$ found by $\mathcal{A}^\star$ is an optimal basis for $\min \{c^Tx \mid x \in P\}$ as well (due to part \ref{lem:en4} of Lemma \ref{lem:pert_prop}).
\end{proof}

It is well\replaced{ }{-}known that any strongly polynomial time algorithm for linear programming can be used 
\deleted{(by appropriately perturbing the objective function)} to \deleted{even} compute optimal basi\replaced{c}{s} solutions \added{(}if they exist\added{)} \added{in strongly polynomial time }(see, e.g., \cite[Chapter~10]{schrijver86} for more details). Hence 
Theorem \ref{th:stronly_poly_algoLP} and Theorem \ref{th:stron_poly_rock} allow us to conclude the following. 

\begin{theorem}
\label{th:reduction_LP_to_linear_diameter}
If there exists a strongly polynomial time algorithm for linear programming with rational data over all simple polytopes whose diameters are bounded  linearly in the numbers of inequalities in their descriptions, then all  linear programs (with rational data) can be solved in strongly polynomial time.
\end{theorem}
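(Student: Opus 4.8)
The plan is to deduce this from Theorem~\ref{th:stron_poly_rock} together with Theorem~\ref{th:stronly_poly_algoLP}. By the latter it is enough to exhibit a strongly polynomial time algorithm that, given a strong input $(S,u)$ --- say $S\colon Ax\le b$ defining a polytope $P$ and $u$ a vertex of $P$ --- together with a rational objective vector $c$, outputs an optimal basic solution of $\min\{c^Tx\mid x\in P\}$. Throughout, let $\mathcal{B}$ be the hypothetical strongly polynomial time linear programming algorithm over simple polytopes of linearly bounded diameter; by the well-known fact recalled in the paragraph preceding this theorem (perturb the objective), we may assume that $\mathcal{B}$ returns an optimal \emph{vertex} of the polytope it is run on.

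First I would feed the strong input to Theorem~\ref{th:stron_poly_rock}. After scaling $A,b$ to integers --- which is strongly polynomial --- that theorem produces, in strongly polynomial time and with encoding size polynomially bounded in $\langle A,b\rangle$, a system $A_Qx\le b_Q$ of $m+2$ inequalities defining a simple rock extension $Q=P^{\le}(A_Q,b_Q)$ of $P$ with diameter at most $2(m-d+1)\le 2(m+2)$. Thus $Q$ is a simple polytope whose diameter is at most twice the number of inequalities in the description at hand, so $\mathcal{B}$ solves linear programs over $Q$ correctly in strongly polynomial time.

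Next I would run $\mathcal{B}$ on the lifted program $\min\{c^Tx+z\mid(x,z)\in Q\}$, that is, on $Q$ with the objective obtained from $c$ by appending the coefficient $+1$ on the new variable. Writing $Q=\{(x,z)\mid Ax+az\le b,\ z\ge 0\}$ with $a>0$, every $(x,z)\in Q$ satisfies $x\in P$, $z\ge 0$ and $(x,0)\in Q$, so the minimum of $c^Tx+z$ over $Q$ equals the minimum of $c^Tx$ over $P$ and is attained exactly on $F_P\times\{0\}$, where $F_P$ is the optimal face of $\min\{c^Tx\mid x\in P\}$. Hence $\mathcal{B}$ returns a vertex $(v^\star,0)$ of $Q$ whose projection $v^\star$ is a vertex of $F_P$, and therefore an optimal vertex of $\min\{c^Tx\mid x\in P\}$. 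Since $S$ is non-degenerate, $v^\star$ lies on exactly $d$ of the hyperplanes $H^{=}(A_i,b_i)$; these $d$ indices can be identified in strongly polynomial time by comparing $A_iv^\star$ with $b_i$ for all $i\in[m]$, and they form the unique basis of $Ax\le b$ defining the optimal basic solution $v^\star$ that we were asked for. Every step is strongly polynomial, so Theorem~\ref{th:stronly_poly_algoLP} applies and all rational linear programs can be solved in strongly polynomial time.

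The one step that needs care is the choice of the coefficient on the new variable in the lifted objective: it has to be \emph{strictly positive}. With a zero or negative coefficient, $\mathcal{B}$ could legitimately return a non-base vertex of $Q$, which projects into the relative interior of a face of $P$ instead of onto a vertex, and from which no basis of $Ax\le b$ could be read off; a strictly positive coefficient is exactly what forces the optimal face of $Q$ down onto the base copy $P\times\{0\}$ of $P$, after which the non-degeneracy of $S$ lets us recover the required basis. Apart from this, the proof is just a composition of the strongly polynomial ingredients supplied by Theorems~\ref{th:stron_poly_rock} and \ref{th:stronly_poly_algoLP}, all encoding sizes remaining polynomially bounded thanks to the size guarantee in Theorem~\ref{th:stron_poly_rock}.
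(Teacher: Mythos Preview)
Your proof is correct and follows essentially the same route as the paper: reduce general LP to strong inputs via Theorem~\ref{th:stronly_poly_algoLP}, build the simple rock extension of linearly bounded diameter via Theorem~\ref{th:stron_poly_rock}, and invoke the hypothetical algorithm $\mathcal{B}$ on it. The paper leaves the last step implicit (it literally just says ``Hence Theorem~\ref{th:stronly_poly_algoLP} and Theorem~\ref{th:stron_poly_rock} allow us to conclude the following''), whereas you spell out the two details needed to close the loop: lifting the objective to $(c,1)$ so that the optimum of $\mathcal{B}$ is forced onto the base $P\times\{0\}$, and reading off the unique basis of the returned vertex from the non-degeneracy of the strong input. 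Both details are sound; in particular your argument that $\min\{c^Tx+z\mid (x,z)\in Q\}$ is attained exactly on $F_P\times\{0\}$ is correct because $a>0$ gives $x\in P$ for every $(x,z)\in Q$, and $z>0$ strictly increases the objective.
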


%Finally, for any strong input consisting of a rational non-degenerate system $Ax \le b$ and a vertex of $P^{\le}(A,b)$ one can apply Algorithm \ref{alg:rock} (after perhaps some shifting and scaling, see the discussion preceding Theorem \ref{th:stron_poly_rock}) to obtain a rock extension $Q$ of $P^{\le}(A,b)$ with linear diameter and a linear number of facets. Thus, by applying Theorem \ref{th:stronly_poly_algoLP} with an algorithm that is based on an algorithm that can optimize linear objective functions over such specific rock extensions in strongly polynomial time, we can  strengthen Theorem \ref{th:stronly_poly_algoLP} to the next theorem. Furthermore, note that given a strongly polynomial time algorithm for solving LP, there exists a strongly polynomial algorithm capable of finding an optimal basi\replaced{c}{s} solution for LP as well, check \citet[Chapter~10]{schrijver86} for more details.

In fact, in order to come up with a strongly polynomial time algorithm for general linear programming problems \added{(under the rationality assumption again) }it \replaced{is}{would be} enough to devise a strongly polynomial time algorithm that optimizes linear functions over \deleted{any }rock extension\added{s with linear diameters}\deleted{ for which a vertex is part of the input data}.

\section{Extensions with short monotone \replaced{paths}{diameters}} \label{sec:monotone}
The results of the previous sections showed for every $d$-polytope $P$ described by a non-degenerate system of $m$ linear inequalities the existence of a simple $(d+1)$-dimensional rock extension $Q$ with at most $m+2$ facets, where each vertex admits a ($z$-increasing) ``canonical'' path of length at most $m-d+1$ to a distinguished vertex (the top vertex) of $Q$. Yet, no statement has been made so far regarding the potential monotonicity of such paths \replaced{with respect to}{w.r.t} linear objective functions. In this section we \deleted{are now going to }build upon \deleted{a }rock extension\added{s} in order allow for short monotone paths. 

For an objective function $c$ we will call an optimal vertex of a polytope $P$ \emph{$c$-optimal}. A path in the graph of $P$ is said to be \emph{$c$-monotone} if the sequence of $c$-values of vertices along the path is strictly increasing.

It clearly does not hold, that for \replaced{every}{any} linear objective function $c\in \Rational^d$ with $w$ being a $c$-optimal vertex of a \added{strongly} non-degenerate
$d$-polytope $P$ and for any other vertex $v$ of $P$, both the ``canonical'' path from $(v,0)$ to the top vertex $t$ of the rock extension $Q$ of $P$ constructed by Algorithm \ref{alg:rock} and the ``canonical'' path $(w,0)$-$t$ traversed backwards from $t$ to $(w,0)$ are $c$-monotone.
Even the path \replaced{from}{form} $t$ to $(w,0)$ itself is not always $c$-monotone. %, which can be shown by a counterexample. 
However, the latter issue can be handled by defining a new objective vector $\widetilde{c}:=(c, -c_z) \in \Real^{d+1}$ with $c_z$ being a \replaced{large}{big} enough positive number, such that all \deleted{the backwards traversals of} ``canonical'' paths in $Q$, including the one for the  $\widetilde{c}$-optimal vertex $(w,0)$, are $\widetilde{c}$-monotone \added{when traversing them from top to bottom}. \deleted{Although t}\added{T}his workaround is justified by the fact, that the top vertex of the rock extension constructed by Algorithm \ref{alg:rock} is known (its basis is defined by the $d+1$ inequalities indexed by $I$)\deleted{,}\added{. However,} it does not offer a short monotone path from any vertex $(v,0)$ to $(w,0)$, since the ``canonical'' path from $(v,0)$ to $t$ is not $\widetilde{c}$-monotone (the sequence of $\widetilde{c}$-values along the path is in fact strictly decreasing).  To simplify our notation we further identify a vertex $u$ of $P$ with the corresponding basis vertex $(u,0)$ of $Q$.% when no specification is necessary.

In order to  \replaced{incorporate}{handle} monotonicity \added{in a certain way} we are going to spend one more dimension by building a crooked prism over the rock extension. Let $P$ be a $d$-polytope defined by a simplex-containing non-degenerate system $Ax \le b$ of $m$ inequalities\replaced{, a}{. A}nd let $Q:= \{(x,z) \in \Real^{d+1} \mid Ax + az \le b\,, z \ge 0\}$ be the rock extension of $P$ constructed by Algorithm \ref{alg:rock}. Consider the prism $Q\times [0,1]$. We now tilt the facets $Q \times \{0\}$ and $Q\times \{1\}$ towards each other such that the (\replaced{E}{e}uclidean) distance between two copies of a \replaced{point in}{vertex of} $Q$ is reduced by some factor that is proportional to its $z$-coordinate. \replaced{Rigorously,}{More precisely} the \replaced{described}{resulting} polytope is \added{defined by}
\begin{equation*}
    \widehat{Q}:= \{(x,z,y) \in \Real^{d+2} \mid Ax + az \le b\,, z \ge 0\,, y-\frac{1}{3}z\ge 0\,, y-\frac{1}{3}z \le 1\}\,.
\end{equation*}

\begin{figure}[ht]
    \centering
    
    \includegraphics[width=0.8\textwidth]{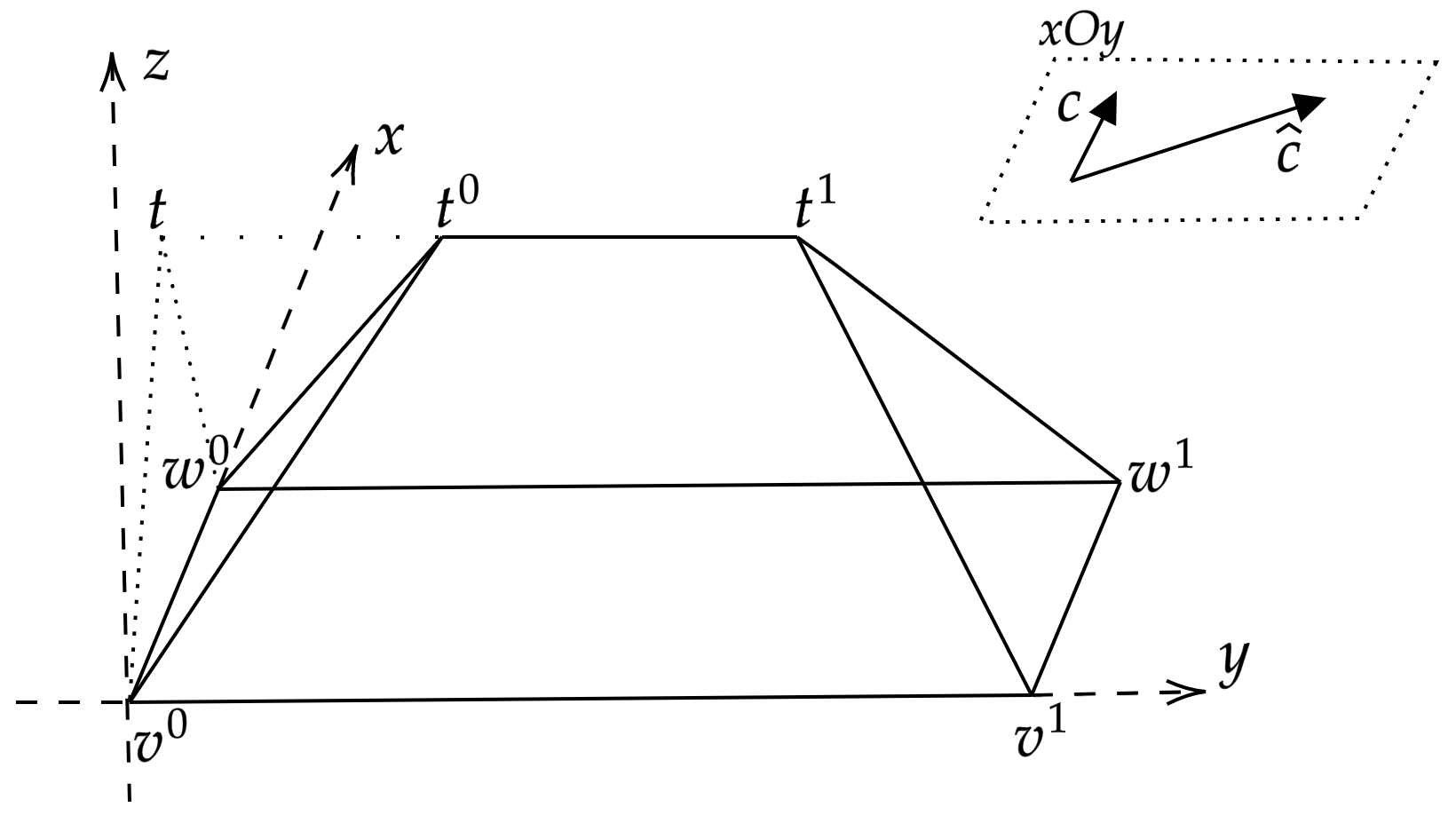}
    \caption{The $d+2$-dimensional simple extension $\widehat{Q}$ of a $d$-polytope $P$. $P$ is represented here by the line segment \replaced{$[v^0,w^0]$}{$v^0w^0$}. Triangles $tv^0w^0$, $t^0v^0w^0$ and $t^1v^1w^1$ represent \added{a} rock extension $Q$ of $P$ with \added{the} top vertex $t$\added{,} and facets $Q^0$ and $Q^1$ of $\widehat{Q}$\added{, both} isomorphic to $Q$, respectively.  Path $v^0$-$t^0$-$t^1$-$w^1$ is $\widehat{c}$-monotone for an auxiliary objective  $\widehat{c}$ such that a $\widehat{c}$-optimal vertex $w^1$ is a preimage of a $c$-optimal vertex $w^0$ of $P$.}
    \label{fig:prism}
\end{figure}

See Figure \ref{fig:prism} for an illustration. \added{We would like to note that the above construction is a special case of the deformed product ($\Bowtie$) introduced by Amenta and Ziegler~\cite{amenta99}. More precisely $\widehat{Q}=\Big(Q,\phi\big((x,z)\big)\Big) \Bowtie \Big([0,1], [\frac{1}{3}, \frac{2}{3}]\Big)$ with $\phi\big((x,z)\big) = z$.} Observe that $\widehat{Q}$ is simple\added{, since it is combinatorially equivalent to the prism over a rock extension $Q$ (which is simple by construction) and prisms preserve simplicity}. We will denote the two facets of $\widehat{Q}$ defined by inequalities $y-\frac{1}{3}z\ge 0$ and $y-\frac{1}{3}z \le 1$ by $Q^0$ and $Q^1$, respectively. Note that both $Q^0$ and $Q^1$ are isomorphic to $Q$. Thus each vertex $u$ of $Q$ corresponds to two vertices of $Q^0$ and $Q^1$ denoted by $u^0$ and $u^1$, respectively. 
Let $c \in \Real^d$ be a linear objective function and $w$ be a $c$-optimal vertex of $P$, and let $v$ be some \added{other} vertex of $P$. Then for the ``canonical'' path from $v$ to the top vertex $t$ of $Q$ there exists an isomorphic path from $v^0$ to $t^0$ of $Q^0$. Since the ``canonical'' $v$-$t$-path in $Q$ is $z$-increasing and due to $Q^0 = f^0(Q)$ with $f^0:(x,z) \mapsto (x,z,\frac{1}{3}z)$, the corresponding $v^0$-$t^0$-path in $Q^0$ is $y$-increasing. Similarly, there exists a $y$-increasing $t^1$-$w^1$-path in $Q^1$ isomorphic to the backwards traversal of the $z$-increasing ``canonical'' $w$-$t$-path in $Q$, since $Q^1 = f^1(Q)$ with $f^1:(x,z) \mapsto (x,z,1-\frac{1}{3}z)$. Together with the edge $t^0t^1$ the two aforementioned paths comprise a $v^0$-$w^1$-path of length at most $2(m-d+1)+1$ in $\widehat{Q}$ that is \added{$\widehat{c}$-}monotone for the objective function $\widehat{c}:=(c,0, c_y) \in \Real^{d+2}$ with large enough positive $c_y$. Note that $w^1$ is a $\widehat{c}$-optimal vertex of $\widehat{Q}$ and a preimage of a $c$-optimal vertex $w$ of $P$ under the affine map $\pi_d:(x,z,y) \mapsto x$ projecting $\widehat{Q}$ down to $P$. In fact, by exploiting Cramer's rule in a similar way as in the proofs of the previous section, it can be shown that choosing $c_y$ as $6||c||_1 2^{8\langle A, a, b\rangle} + 1$ (after scaling $Ax + az \le b$ to integrality) is enough to guarantee $\widehat{c}$-monotonicity of a $v^0$-$t^0$-$t^1$-$u^1$-path of the above mentioned type for any two vertices $u,v$ of $Q$. Thus we derive the following statement, where $\pi_k$ denotes the orthogonal projection on the first $k$ coordinates.

\begin{theorem} \label{th:mon_diam}
Let $A \in \Rational^{m \times d}$ and $b \in \Rational^m$ define a non-degenerate system of linear inequalities such that $P=P^{\le}(A,b)$ is bounded. Then there exists a $d+2$-dimensional simple extension $\widehat{Q}$ with $\pi_d(\widehat{Q}) = P$ having at most $m+4$ facets such that for any linear objective function $c \in \Rational^d$ there is a positive number $c_y$ such that for any vertex $v$ of $P$ there exists a $(c,0, c_y)$-monotone path from the vertex $(v,0,0)$ to a $(c,0, c_y)$-optimal vertex $w$ of $\widehat{Q}$ of length at most $2(m-d+1)+1$ with $\pi_d(w)$ being a $c$-optimal vertex of $P$. A system of linear inequalities defining $\widehat{Q}$ and the number $c_y$ are computable in strongly polynomial time, if a vertex of $P$ is specified within the input.
\end{theorem}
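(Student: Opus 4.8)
The plan is to assemble the construction given just before the theorem into a formal argument; the only genuinely new work is the explicit, strongly computable choice of $c_y$. First I would apply Theorem~\ref{th:stron_poly_rock} together with the discussion of Algorithm~\ref{alg:rock}: using the vertex of $P$ supplied in the input, I obtain in strongly polynomial time a simple rational rock extension $Q=\{(x,z)\in\Real^{d+1}\mid Ax+az\le b,\ z\ge 0\}$ of $P$ with at most $m+2$ facets and a distinguished top vertex $t=(o,1)$ such that every vertex of $Q$ has a $z$-increasing ``canonical'' path of length at most $m-d+1$ to $t$; the ``$+1$'' over the bound of Theorem~\ref{th:bottom-top-path} accounts for the single redundant simplex-defining inequality that Algorithm~\ref{alg:rock} appends to $Ax\le b$. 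Then I would take $\widehat{Q}$ to be the crooked prism over $Q$ defined above the theorem, with facets $Q^0=f^0(Q)$ and $Q^1=f^1(Q)$ for $f^0:(x,z)\mapsto(x,z,\tfrac13 z)$ and $f^1:(x,z)\mapsto(x,z,1-\tfrac13 z)$. Since $\widehat{Q}$ is combinatorially the prism over the simple polytope $Q$, it is a simple $(d+2)$-polytope; it has at most $m+4$ facets (those of $Q$, lifted, together with the two new ones), and forgetting first $y$ and then $z$ yields $\pi_d(\widehat{Q})=P$, so $\widehat{Q}$ is an extension of $P$.

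Next I would produce the paths. Fix $c\in\Rational^d$, let $w$ be a $c$-optimal vertex of $P$ (a vertex of $P$, hence a base vertex of $Q$), and let $v$ be any vertex of $P$. Applying $f^0$ to the canonical $z$-increasing path from $(v,0)$ to $t$ in $Q$ gives a path in $Q^0$ from $v^0=(v,0,0)$ to $t^0$ along which $y=\tfrac13 z$ is strictly increasing; applying $f^1$ to the reverse of the canonical $z$-increasing path from $w$ to $t$ gives a path in $Q^1$ from $t^1$ to $w^1=(w,0,1)$ along which $y=1-\tfrac13 z$ is strictly increasing (the reversed path is $z$-decreasing). Since $t$ has $z=1$, the prism edge $t^0t^1$ runs from $y=\tfrac13$ to $y=\tfrac23$, so $y$ increases there too. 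Concatenating, I obtain a path from $(v,0,0)$ to $w^1$ in $\widehat{Q}$ of length at most $(m-d+1)+1+(m-d+1)=2(m-d+1)+1$ along which the $y$-coordinate strictly increases.

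It then remains to pick $c_y>0$ so that (i) this path — and, more generally, every canonical $v^0$-$t^0$-$t^1$-$u^1$ path of the above type — is $(c,0,c_y)$-monotone; (ii) $w^1$ is a $(c,0,c_y)$-optimal vertex of $\widehat{Q}$; and (iii) $\langle c_y\rangle$ is polynomially bounded in the input size. After scaling an inequality description of $\widehat{Q}$ to integrality — whose encoding size is polynomial in $\langle A,a,b\rangle$, hence in $\langle A,b\rangle$, since $\langle a\rangle$ is polynomially bounded by Theorem~\ref{th:stron_poly_rock} — Cramer's rule bounds every coordinate of every vertex of $\widehat{Q}$ by some $\Delta\le 2^{\bigO(\langle A,a,b\rangle)}$ in absolute value, bounds the change of any coordinate across any edge of $\widehat{Q}$ by $2\Delta$, and bounds from below by $\Delta^{-2}$ the increase of the $y$-coordinate across any edge on which $y$ strictly increases. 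Hence for every $c_y$ beyond a threshold of order $\bigO(\|c\|_1\Delta^3)$ the value $c^Tx+c_yy$ strictly increases along every edge of every such path, and the same order of magnitude forces the $y$-maximal face $\{z=0,\ y=1\}$ of $\widehat{Q}$ — which is isomorphic to $P$ — to contain the $(c,0,c_y)$-optimum, inside which maximizing $c^Tx$ singles out $w^1$; chasing the constants exactly as in Section~\ref{sec:rational-encoding} shows that $c_y:=6\|c\|_1 2^{8\langle A,a,b\rangle}+1$ is admissible, and then $\pi_d(w^1)=w$ is $c$-optimal in $P$. Since $Q$, the inequality list for $\widehat{Q}$, and this value of $c_y$ are all computable in strongly polynomial time once a vertex of $P$ is given, the theorem follows.

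I expect the only delicate point to be part (iii): one must exhibit a single $c_y$ of polynomial encoding size that simultaneously enforces monotonicity of all the relevant paths and pins the global $(c,0,c_y)$-optimum of $\widehat{Q}$ onto the base copy of $P$ in $Q^1$, and one must check that the uniform Cramer-rule estimates on coordinate differences of adjacent vertices of $\widehat{Q}$ are strong enough for both purposes at once. The structural claims — simplicity, the facet count $m+4$, the extension property $\pi_d(\widehat{Q})=P$, and the path length $2(m-d+1)+1$ — are immediate from the prism construction and from Theorem~\ref{th:stron_poly_rock}.
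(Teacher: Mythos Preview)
Your proposal is correct and follows essentially the same route as the paper: the crooked-prism construction over the rock extension from Theorem~\ref{th:stron_poly_rock}, the concatenated $v^0$--$t^0$--$t^1$--$w^1$ path with strictly increasing $y$-coordinate, and the Cramer-rule estimate justifying the choice $c_y=6\|c\|_1 2^{8\langle A,a,b\rangle}+1$ are exactly what the paper uses. You supply a bit more detail than the paper does (e.g., the explicit $y$-values $\tfrac13$ and $\tfrac23$ at $t^0,t^1$ and the argument that the $\widehat c$-optimum sits in the copy of $P$ at $\{z=0,\,y=1\}$), which is fine.
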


\deleted{Note that since the extension $\widehat{Q}$ is simple its graph is isomorphic to its bases-exchange graph.} \added{To state the concluding result of this paper we introduce the following notion. The minimum length of a $c$-monotone path in the graph of a polytope $P$ between a given vertex $v$ and a $c$-optimal vertex of $P$ is called the \emph{monotone $c$-distance of $v$ in the graph of $P$}.} \deleted{Therefore,}\added{Then,} combining \replaced{Theorems \ref{th:mon_diam} and}{the latter result with Theorem} \ref{th:reduction_LP_to_linear_diameter} we conclude the following.

\begin{theorem}\label{th:reduction_to_pivot_rule}
%If there is a pivot rule for the simplex algorithm \added{initialized at a vertex $v$ of a simple polytope $P$} for which one can bound the number of iterations polynomially in the monotone \replaced{$c$-distance of $v$ in the graph of $P$,}{diameter of the bases-exchange graph of the polytope} then the general (rational) linear programming problem can be solved in strongly polynomial time.
\replaced{If there is a pivot rule for the simplex algorithm that requires only a number of steps (executable in strongly polynomial time) that is bounded polynomially in the monotone $c$-distance of $v$ in the graph of $P$ for every simple polytope $P$, objective function vector $c$, and starting vertex $v$, then the general (rational) linear programming problem can be solved in strongly polynomial time.}{If there is a pivot rule for the simplex algorithm for which one can bound the number of iterations polynomially in the monotone diameter of the bases-exchange graph of the polytope then the general (rational) linear programming problem can be solved in strongly polynomial time.}
\end{theorem}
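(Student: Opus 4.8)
We argue along the same lines as in the proof of Theorem~\ref{th:reduction_LP_to_linear_diameter}, except that the rock extension of Theorem~\ref{th:stron_poly_rock} is replaced by the monotone-path extension $\widehat{Q}$ of Theorem~\ref{th:mon_diam}. By Theorem~\ref{th:stronly_poly_algoLP} it suffices to convert the hypothesized pivot rule into a strongly polynomial time algorithm $\mathcal{A}$ that, given a strong input $(S,u)$ --- where $S$ is a rational non-degenerate system $Ax\le b$ defining a polytope $P=P^{\le}(A,b)$ and $u$ is a vertex of $P$ --- together with a rational objective vector $c$, returns an optimal basic solution of $\min\{c^Tx\mid Ax\le b\}$. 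Note that $P$ is bounded, so this optimum exists, and that by non-degeneracy every vertex of $P$ is the basic solution of a unique basis of $Ax\le b$.

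To build $\mathcal{A}$, scale $A,b,c$ to integrality and apply Theorem~\ref{th:mon_diam} --- which is legitimate since $Ax\le b$ is non-degenerate, $P$ is bounded, and a vertex $u$ of $P$ is available --- to obtain, in strongly polynomial time, a simple $(d+2)$-polytope $\widehat{Q}$ with at most $m+4$ facets and $\pi_d(\widehat{Q})=P$, together with a positive number $c_y$ such that, writing $\widehat{c}:=(c,0,c_y)$, for every vertex $v$ of $P$ there is a $\widehat{c}$-monotone path in the graph of $\widehat{Q}$ from the vertex $(v,0,0)$ to a $\widehat{c}$-optimal vertex of $\widehat{Q}$ of length at most $2(m-d+1)+1$. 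In particular $(u,0,0)$ is a vertex of $\widehat{Q}$, whose defining basis in the description of $\widehat{Q}$ is obtained in strongly polynomial time from the basis of $u$ in $Ax\le b$, and whose monotone $\widehat{c}$-distance in the graph of the simple polytope $\widehat{Q}$ is at most $2(m-d+1)+1$.

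Now $\mathcal{A}$ runs the simplex algorithm equipped with the hypothesized pivot rule on $\widehat{Q}$, objective $\widehat{c}$, starting at $(u,0,0)$. By hypothesis it reaches a $\widehat{c}$-optimal vertex $w$ of $\widehat{Q}$ in a number of steps bounded by a fixed polynomial in the monotone $\widehat{c}$-distance of $(u,0,0)$ in the graph of $\widehat{Q}$, hence polynomially in $m$ and $d$, each step being executable in strongly polynomial time; since the encoding sizes of $\widehat{Q}$, of $\widehat{c}$ and of $c_y$ are polynomially bounded in $\langle A,b\rangle$ and $\langle c\rangle$ (by Sections~\ref{sec:rational-encoding} and~\ref{sec:alg}) and each basis exchange keeps all occurring numbers polynomially bounded (Cramer's rule), the entire run is strongly polynomial in the original input size. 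Because $c_y$ was chosen large enough, the $\widehat{c}$-optimal face of $\widehat{Q}$ is a copy of the $c$-optimal face of $P$ --- as in the proof of Theorem~\ref{th:mon_diam} --- so every $\widehat{c}$-optimal vertex of $\widehat{Q}$, and in particular $w$, projects under $\pi_d$ to a $c$-optimal vertex of $P$. Since $Ax\le b$ is non-degenerate, $\mathcal{A}$ outputs the unique basis of $Ax\le b$ defining the vertex $\pi_d(w)$, which is an optimal basic solution of $\min\{c^Tx\mid Ax\le b\}$. Plugging $\mathcal{A}$ into Theorem~\ref{th:stronly_poly_algoLP} then yields a strongly polynomial time algorithm for all rational linear programs.

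The combinatorial core is handed to us by Theorem~\ref{th:mon_diam}: the monotone $\widehat{c}$-distance the pivot rule must cover on the simple polytope $\widehat{Q}$ is at most $2(m-d+1)+1$, exactly the quantity on which the pivot-rule hypothesis is allowed to depend. The real work is the complexity bookkeeping that turns ``polynomially many strongly polynomial pivot steps'' into a genuinely strongly polynomial algorithm measured in the original input: one must verify that $\langle\widehat{Q}\rangle$, $\langle\widehat{c}\rangle$ and $c_y$ stay polynomial in $\langle A,b\rangle$ and $\langle c\rangle$, that the bit-lengths produced during the pivot steps do not accumulate, and that it is \emph{every} $\widehat{c}$-optimal vertex of $\widehat{Q}$ --- not just the endpoint of the canonical path singled out in Theorem~\ref{th:mon_diam} --- whose projection is a $c$-optimal vertex of $P$, so that $\mathcal{A}$ may safely use whichever optimal vertex the pivot rule happens to stop at.
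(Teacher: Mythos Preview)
Your proof is correct and follows essentially the same route as the paper, which disposes of the theorem in one sentence by pointing to Theorems~\ref{th:mon_diam} and~\ref{th:reduction_LP_to_linear_diameter}. You go through Theorem~\ref{th:stronly_poly_algoLP} directly rather than through Theorem~\ref{th:reduction_LP_to_linear_diameter}, but since the latter is itself obtained from Theorem~\ref{th:stronly_poly_algoLP} together with Theorem~\ref{th:stron_poly_rock}, this is only a cosmetic difference; your expanded version also usefully flags the one point the terse statement of Theorem~\ref{th:mon_diam} leaves implicit, namely that \emph{every} $\widehat{c}$-optimal vertex of $\widehat{Q}$ (not merely the endpoint of the canonical path) projects to a $c$-optimal vertex of $P$.
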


\section*{\added{Discussion}} 

\added{We briefly mention some open questions that appear to be interesting for future research.}  

\added{The dimension of the rock extension exceeds that of the original polytope just by one. This leaves open the question whether it is possible to obtain extensions with better diameter bounds by using additional dimensions.}

%\added{The most intriguing question is whether there exists a pivot rule that is guaranteed to produce a ($c$-monotone) path to an optimal vertex that is not much worse in terms of length than a shortest $c$-monotone path (which is of polynomial length in our construction from Theorem \ref{th:mon_diam}). Cardinal and Steiner showed in \cite{cardinal23} that if $\mathrm{P} \neq \mathrm{NP}$, then for every simplex pivot rule executable in polynomial time and every constant $k \in \Natural$ there exists a linear program on a perfect matching polytope and a starting vertex of the polytope such that the optimal solution can be reached in two $c$-monotone non-degenerate steps from the starting vertex, yet the pivot rule will require at least $k$ non-degenerate steps to reach the optimal solution. We now ask if there is a pivot rule for LPs with feasible regions being \emph{simple} polytopes that follows a path whose length is bounded by a polynomial in the maximum of the number of inequalities in the input system and the length of the shortest $c$-monotone path.}

The most intriguing question is whether \emph{for simple $d$-polytopes with $n$ facets and diameter at most $2(n-d)$} there exists a pivot rule for the simplex algorithm that is guaranteed to produce a ($c$-monotone) path to an optimal vertex of length bounded polynomially in $d$ and $n$, as this would imply that the general linear programming problems can be solved in strongly polynomial time according to the results presented above. We would like to point out that Cardinal and Steiner showed in \cite{cardinal23} that if $\mathrm{P} \neq \mathrm{NP}$ holds then for every simplex pivot rule executable in polynomial time and for every constant $k \in \Natural$ there exists a linear program on a perfect matching polytope and a starting vertex of the polytope such that the optimal solution can be reached in two $c$-monotone \emph{non-degenerate} steps from the starting vertex, yet the pivot rule will require at least $k$ \emph{non-degenerate} steps to reach the optimal solution. This result, however, even under the assumption $\mathrm{P} \neq \mathrm{NP}$ does not rule out the existence of a pivot rule for which one can bound the number of steps by a polynomial in the diameter \emph{plus} the number of facets, not even for general (rather than just simple) polytopes. Again, by the results presented above, such a pivot rule would imply a strongly polynomial time algorithm for general linear programming problems.

\section*{Acknowledgements}

We are grateful to Stefan Weltge for several helpful comments, to Robert Hildebrand for pointing us to Cauchy's lemma, and to Lisa Sauermann for discussions on the three-dimensional case. \added{We express out gratitude to the anonymous reviewers for their improvement suggestions}. The authors would like to thank the Deutsche Forschungsgemeinschaft (DFG, German Research Foundation) for support within GRK 2297 MathCoRe.

%\printbibliography

\bibliographystyle{siamplain}
\bibliography{refs}
\end{document}